\documentclass[11pt]{amsart}
\usepackage[T1]{fontenc}
\usepackage{lmodern}
\usepackage[a4paper,textwidth=6.25in,textheight=9.25in,centering]{geometry}
\usepackage{amsmath,amssymb,mathtools,microtype}
\usepackage[hidelinks]{hyperref}
\newtheorem{theorem}{Theorem}[section]
\newtheorem{lemma}[theorem]{Lemma}
\newtheorem{proposition}[theorem]{Proposition}
\newtheorem{corollary}[theorem]{Corollary}
\theoremstyle{remark}
\newtheorem{remark}[theorem]{Remark}
\numberwithin{equation}{section}
\newcommand{\R}{\mathbb R}
\newcommand{\HH}{\mathbb H^2}
\newcommand{\dd}{\,d}
\newcommand{\supp}{\operatorname{supp}}
\newcommand{\Vol}{\operatorname{Vol}}
\allowdisplaybreaks[1]
\title[Cubic moments on surfaces]{ Cubic moments of eigenfunctions on surfaces}
\date{}
\author{Xing Wang and Cheng Zhang}
\address{School of Mathematics, Hunan University, Changsha, HN 410012, China}
\email{xingwang@hnu.edu.cn}
\address{Yau Mathematical Sciences Center,
	Tsinghua University,
	Beijing, BJ 100084, China}
\email{czhang98@tsinghua.edu.cn}

\subjclass[2010]{Primary 35P20, 58J50; Secondary 42B20}

\keywords{Laplace eigenfunctions, cubic moments, Gaussian curvature, hyperbolic surfaces, Gaussian moments, oscillatory integrals}
\hypersetup{pdftitle={Local and global cubic moments of eigenfunctions on surfaces},
	pdfkeywords={Laplace eigenfunctions, cubic moments, Gaussian curvature, hyperbolic surfaces, Gaussian moments, oscillatory integrals},
	pdfsubject={Sharp local cubic bounds and global cubic decay on complete hyperbolic surfaces}}
\begin{document}
	\begin{abstract}
		We identify Gaussian curvature as a local mechanism for cubic cancellation
		of Laplace eigenfunctions on Riemannian surfaces, without arithmetic or
		dynamical assumptions. For $L^2$-normalized local solutions at frequency
		$\lambda$, cubic integrals against smooth compactly supported weights
		tend to zero whenever the curvature is nonzero almost everywhere where
		the weight is nonzero. Finite-order nondegeneracy yields sharp polynomial
		decay rates. In particular, on closed surfaces with nowhere-vanishing
		curvature, we obtain the rate $O(\lambda^{-1/6})$, attained by spherical
		harmonics on the round sphere. This proves the cubic case of the Gaussian
		moment conjecture for all normalized real eigenfunctions on closed
		negatively curved surfaces. The local proof rests on an exact identity
		relating the reduced phase at a Fermat point to Gaussian curvature.
		On complete hyperbolic surfaces with positive systole, we also prove
		global $O((\log\lambda)^{-3/2})$ decay for $L^2$-normalized eigenfunctions
		and smooth weights with bounded covariant derivatives through order two,
		without assuming finite area.
	\end{abstract}
	\maketitle
	\enlargethispage{3pt}
	
	\section{Introduction}\label{sec:introduction}
	
	Berry's random-wave model \cite{Berry} predicts Gaussian value
	statistics for high-energy eigenfunctions in chaotic systems; see also
	\cite{AurichSteiner,McDonaldKaufman,Nonnenmacher,Srednicki}. A corresponding Gaussian moment conjecture predicts that, on a fixed
	closed connected negatively curved surface $(M,g)$, every sequence of
	real eigenfunctions satisfying
	$-\Delta_g e_\lambda=\lambda^2e_\lambda$ and
	$\|e_\lambda\|_{L^2(M)}=1$ obeys, for each fixed
	$w\in C^\infty(M)$ and integer $q\ge0$,
	\begin{equation}\label{eq:gaussian-prediction}
		\int_M w e_\lambda^q\dd V_g
		\longrightarrow
		\frac{\mathbb E[Z^q]}{\Vol_g(M)^{q/2}}
		\int_M w\dd V_g,
		\qquad Z\sim N(0,1),
	\end{equation}
	as $\lambda\to\infty$. Related formulations include the Gaussian
	value-distribution question of Hejhal and Rackner
	\cite[Section~5]{HejhalRackner}, the unweighted spherical analogue
	of B\"ocherer, Sarnak, and Schulze-Pillot \cite[Conjecture~2]{BSS},
	Humphries' compact-set Gaussian moment conjecture \cite[Conjecture~1.1]{Humphries},
	and smoothly weighted moments on the modular surface by Huang
	\cite[Section~1.1]{Huang}.
	
For $q=1$, integration by parts gives
$\int_M w e_\lambda\dd V_g=O_{w,N}(\lambda^{-N})$ for every $N>0$.
For $q=2$, \eqref{eq:gaussian-prediction} predicts spatial equidistribution,
a consequence of quantum unique ergodicity (QUE) as formulated by
Rudnick and Sarnak \cite{RudnickSarnak}.
The quantum ergodicity theorem of Shnirelman, Colin de Verdi\`ere,
and Zelditch \cite{Shnirelman,ColinDeVerdiere,Zelditch1987}
establishes this convergence along a density-one subsequence.
Lindenstrauss \cite{Lindenstrauss} proved QUE for Hecke eigenfunctions
on compact arithmetic congruence surfaces.
Soundararajan \cite{Soundararajan} ruled out escape of mass, thereby completing Lindenstrauss's QUE result for Hecke--Maass cusp forms on the modular surface.
The full-sequence prediction remains open for general closed hyperbolic surfaces.
See also Dyatlov and Jin \cite{DyatlovJin} and Dyatlov, Jin,
and Nonnenmacher \cite{DyatlovJinNonnenmacher} for full-support
results on semiclassical measures.
For $q=3$, the first nonlinear odd moment,
\eqref{eq:gaussian-prediction} predicts
$\int_M w e_\lambda^3\dd V_g\to0$.
This signed cancellation follows neither from QUE nor from
$L^3$-norm bounds and is the focus of this paper.
	
	On the modular surface, Watson \cite[Theorem~5]{Watson} proved unweighted cubic-moment decay for Hecke--Maass cusp forms, and Huang \cite[Theorem~1.1]{Huang} established a weighted version. B\"ocherer, Sarnak, and Schulze-Pillot \cite[Proposition~2.4]{BSS} proved cubic decay for Hecke spherical harmonics on the round sphere, whose geodesic flow is completely integrable. By contrast, resonant eigenfunctions on a flat torus can have nondecaying cubic moments (see Proposition~\ref{prop:sharp-flat}). These examples show that chaotic dynamics is not necessary for cubic decay, while complete integrability alone does not guarantee it. Although arithmetic structure plays an important role in the cited results, this contrast raises a natural question: what geometric conditions ensure cubic-moment decay independently of arithmetic or dynamical assumptions?
	
	Our results show that Gaussian curvature provides a local mechanism for
	cubic cancellation. If $K_g$ is nonzero almost everywhere on
	$\{w\ne0\}$, the weighted cubic moment tends to zero. When the order
	of vanishing of $K_g$ is uniformly bounded, we obtain polynomial decay
	rates with exponents that are sharp for the corresponding order. These
	conclusions hold for local solutions of the eigenfunction equation,
	without arithmetic or dynamical assumptions.
	
	\subsection{Local cubic decay and sharpness}
	Let $(M,g)$ be a smooth Riemannian surface without boundary, with
	Gaussian curvature $K_g$, Riemannian distance $d_g$, and area measure
	$dV_g$. We use the nonpositive Laplace--Beltrami operator $\Delta_g$
	and the Levi--Civita covariant derivative $\nabla$.
	
	\begin{theorem}\label{thm:upper}
		Let $U\Subset M$ be open, $w\in C_c^\infty(U)$, and
		$e_\lambda\in C^\infty(U)\cap L^2(U)$ satisfy, for $\lambda\ge1$,
		\[
		-\Delta_g e_\lambda=\lambda^2e_\lambda\quad\text{on }U,
		\qquad \|e_\lambda\|_{L^2(U)}=1.
		\]
		The following bounds hold.
		
		\textup{(i)} Without a curvature assumption,
		\begin{equation}\label{eq:uniform-bound}
			\left|\int_Uw e_\lambda^3\dd V_g\right|
			\le C_{g,w,U}.
		\end{equation}
		
		\textup{(ii)} If $K_g\ne0$ almost everywhere on $\{w\ne0\}$, then as $\lambda\to\infty$
		\begin{equation}\label{eq:upper}
			\left|\int_Uw e_\lambda^3\dd V_g\right|=o(1).
		\end{equation}
		
		\textup{(iii)}  If $\sum_{j=0}^m|\nabla^jK_g(x)|_g\ge\kappa$ on $\supp w$,
		where $m\ge0$ is an integer and $\kappa>0$, then
		\begin{equation}\label{eq:finite-order-corollary}
			\left|\int_Uw e_\lambda^3\dd V_g\right|
			\le C_{g,w,U,m,\kappa}\lambda^{-\frac1{6+2m}}.
		\end{equation}
		The same bounds hold with $e_\lambda^3$ replaced by $u_1u_2u_3$,
		where each $u_j\in C^\infty(U)\cap L^2(U)$ satisfies
		$-\Delta_gu_j=\lambda^2u_j$ on $U$ and
		$\|u_j\|_{L^2(U)}=1$. If $M$ is closed, we may take $U=M$ and
		$w\in C^\infty(M)$.
		
	\end{theorem}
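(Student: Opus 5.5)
We will reduce everything to a local oscillatory-integral estimate by means of a reproducing operator. Fix a nonnegative $\rho\in\mathcal S(\R)$ with $\rho(0)=1$ and $\hat\rho$ supported in $(-\epsilon,\epsilon)$, where $\epsilon$ is smaller than the injectivity radius on a neighbourhood of $\supp w$. Then $u=\rho(\sqrt{-\Delta_g}-\lambda)u$ for each eigenfunction $u_j$. Since $u_j$ is only a local solution, we realize this identity through the finite-propagation-speed formula $\rho(\sqrt{-\Delta_g}-\lambda)=\frac1{2\pi}\int\hat\rho(t)e^{-it\lambda}\cos(t\sqrt{-\Delta_g})\,dt$ plus a smoothing error, with $\cos(t\sqrt{-\Delta_g})u$ defined locally by the wave equation. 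After cutting off $\supp w$ into small balls, we obtain $u_j(x)=\int \chi_\lambda(x,y)u_j(y)\dd V_g(y)+O(\lambda^{-N})\|u_j\|_{L^2}$. The Hadamard parametrix gives
\[
\chi_\lambda(x,y)=\lambda^{1/2}\sum_\pm e^{\pm i\lambda d_g(x,y)}a_\pm(x,y,\lambda)\,d_g(x,y)^{-1/2}
\]
plus a diagonal term that contributes harmlessly. Inserting this representation into all three factors gives
\[
\int w\,u_1u_2u_3=\iiint T_\lambda(y_1,y_2,y_3)\,u_1(y_1)u_2(y_2)u_3(y_3),
\qquad
T_\lambda=\lambda^{3/2}\int w(x)\prod_j a_j\, e^{i\lambda\Phi_\pm(x;y)}d_j^{-1/2}\dd V_g(x),
\]
with phase $\Phi=\sum_j\pm d_g(x,y_j)$. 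The task is then to bound $T_\lambda$ as a trilinear form on $L^2\times L^2\times L^2$.

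For part (i), we use the bilinear Hörmander/Stein-type bound $\|\chi_\lambda f\|_{L^3}\lesssim\lambda^{1/6}\|f\|_2$ for $d=2$, due to Sogge's $L^p$ estimate at $p=3$ below the critical exponent $6$. Together with Hölder, this gives $\lambda^{1/2}$, which is too weak. We therefore instead use the $L^4$ bound $\lambda^{1/8}$ and an $L^2$ bound on the remaining factor, and exploit the fact that the $x$-integral is stationary only when $\nabla_x\Phi=0$, i.e.\ when the three unit vectors $\pm\nabla_x d_g(x,y_j)$ sum to zero. This is the Fermat-point configuration: the three geodesics meet at $x$ at mutual angles $2\pi/3$. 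Where $\Phi$ is nonstationary, integration by parts in $x$ gives rapid decay. Near a Fermat point, stationary phase in $x\in\R^2$ yields $\lambda^{3/2}\cdot\lambda^{-1}=\lambda^{1/2}$ times $e^{i\lambda\Psi(y)}$, where $\Psi(y)=\Phi(x_F(y);y)$ is the reduced phase. We then view the resulting kernel as an oscillatory integral operator in $(y_1,y_2,y_3)\in(\R^2)^3$, which factors radially in the distances $r_j=d_g(x,y_j)$. In polar coordinates around the Fermat configuration, the $r_j$ integrations reduce, via the $\rho$ localization in frequency, to integrals over circle arcs. The uniform bound (i) comes from Cauchy--Schwarz on this reduced form combined with the $L^2$ boundedness of restriction to geodesic circles of radius $\sim1$ at scale $\lambda^{-1}$.

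For (ii) and (iii), the key input is the exact identity promised in the abstract: the Hessian of the reduced phase $\Psi$ in the angular variables at a Fermat point is proportional to $K_g(x)$, to leading order in the radii. In flat space, three lines through a point at angles $2\pi/3$ form a degenerate family and $\Psi$ has a flat direction, which is the torus counterexample. Curvature bends the geodesics, so $\Psi$ acquires a nondegenerate quadratic term of size $K_g(x)\,\theta^2$. We will compute this by Jacobi fields: $d_g(x,y)$ along a variation of $x$ has second derivative $\cot$-type with correction $-\tfrac13K r$. Given the identity, we split $\supp w$ into the set where $|K_g|\ge\delta$ and its complement. On the first set, a $TT^*$ / van der Corput argument in the degenerate direction gains $\lambda^{-1/2}$ in one variable; interpolated against the trivial bound, this gives $\lambda^{-1/6}$. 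On the second set, the measure tends to zero as $\delta\to0$ because $K_g\neq0$ a.e., and the bound from (i), localized, is controlled by that measure (via an $L^2$-local version of the $\lambda^{1/6}$ estimate). This gives $o(1)$.

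For (iii), $\sum_{j\le m}|\nabla^jK|\ge\kappa$ ensures that along the degenerate direction the phase has some derivative of order at most $m+2$ bounded below. Van der Corput of order $m+2$ then gains $\lambda^{-1/(m+2)}$, and the same interpolation converts this into $\lambda^{-1/(6+2m)}$. The main obstacle is the curvature identity for the reduced phase together with uniform control of the stationary set, since Fermat points may degenerate near coincident $y_j$. We would handle those degenerate regions by a dyadic decomposition in the separation $|y_i-y_j|$, bounding each piece with the local Sogge estimates. For closed $M$, a partition of unity reduces the problem to the local case with $U=M$.
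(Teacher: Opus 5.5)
Your outline has the right skeleton: a reproducing formula with phase $d_g(x,y)$, stationary phase at the Fermat point, a Jacobi-field computation, and for (ii) a splitting of $w$ by the size of $K_g$. But the step that actually produces decay is misidentified.

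Freeze the radial parameters, as you suggest, so that each $y_j$ runs over a transversal curve with parameter $s_j$ and the reduced form is $\int e^{i\lambda\Phi(s)}a(s)\prod_j f_j(s_j)\,ds$. In the flat case $\Phi_0=H(s_1+s_2+s_3)$ with $H''\neq0$. So the obstruction is not a vanishing Hessian but this convolution structure, which keeps the form at the trivial size $\lambda^{-1/2}$. In fact no second-order information can help:
\begin{itemize}
\item terms $\Phi_{jj}s_j^2$ are absorbed into $f_j$ by modulation;
\item the mixed coefficients $\Phi_{12},\Phi_{13},\Phi_{23}$, all positive here, are normalized to a common value by rescalings $s_j\mapsto c_js_j$ with $c_j\approx1$.
\end{itemize}
Hence every quadratic phase of this type is equivalent to the flat one, and a ``nondegenerate quadratic term $K_g\theta^2$'' yields no gain. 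With angular parameters on circles about a fixed point, the Hessian even stays degenerate in the simultaneous-rotation direction whenever $K_g$ is a nonzero constant, where decay does hold. The relevant invariant is third order. Let $q_\Phi=\Phi_{33}-\Phi_{13}\Phi_{23}/\Phi_{12}$, which vanishes identically in the flat case. The quantity $(\Phi_{23}\partial_1-\Phi_{13}\partial_2)q_\Phi$ is unchanged by the modulations above. The paper's key identity, derived from the Riccati equation $v_3(\alpha_3)=\alpha_3^2+K_g$ for the Hessian of $d_g(\cdot,Y_3(s_3))$, shows that it is a nonvanishing multiple of $K_g$ at the Fermat point.

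Consequently, ``van der Corput in the degenerate direction, interpolated against the trivial bound'' is not an argument for $\lambda^{-1/6}$. You never say what is interpolated, and gaining $\lambda^{-1/2}$ in one variable does not lead to $1/6$. In the paper, Cauchy--Schwarz in $s_3$ produces a two-variable operator. After a partial Fourier transform, its mixed Hessian determinant is $\varepsilon(y-\eta)e_\varepsilon$ with $|e_\varepsilon|\ge c$, so the canonical relation is a two-sided fold precisely because of the curvature identity. The Pan--Sogge bound $\mu^{-5/6}$, $\mu=\lambda\varepsilon$, applied after slicing the dual variable at scale $\varepsilon$ and using almost orthogonality, gains $\mu^{-1/3}$ in operator norm, hence $\mu^{-1/6}$ in the trilinear form.

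The same problem affects (iii). Turning $\lambda^{-1/(m+2)}$ into $\lambda^{-1/(6+2m)}$ by ``the same interpolation'' is unexplained; a rule scaling exponents so that $1/2\mapsto1/6$ would give $1/(3m+6)$. The paper instead inducts on the vanishing order, using:
\begin{itemize}
\item the $\varepsilon$-uniform bound $(1+\lambda\varepsilon)^{-1/6}$ for $K_g=\varepsilon F$;
\item dyadic bands $\{|h_F|\approx r\}$, where $h_F$ is a derivative of $F$ of order $k-1$ with nonvanishing gradient;
\item the fact that one factor has $L^2$ norm $\lesssim r^{1/2}$ on $\{|h_F|<r\}$;
\item rescaling to balls of radius $\approx r$, with effective parameter $\lambda\varepsilon r^{k+3}$;
\item the cutoff $\delta=(\lambda\varepsilon)^{-1/(k+3)}$.
\end{itemize}

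Smaller points:
\begin{itemize}
\item With $\hat\rho$ supported near $0$, the diagonal part of the kernel needs an argument. Taking $\supp\hat\rho\subset(\tau_0,2\tau_0)$, as the paper does, keeps all distances $\approx1$, and then coincident $y_j$ are never an issue.
\item Sogge's $L^3$ exponent is $1/12$, not $1/6$.
\item Part (i) follows from a dichotomy: either $|\sum_j\nabla_x d_g(x,y_j)|\ge c$ (nonstationary phase), or the gradients are pairwise transversal. In the second case, the bilinear bound $\|E_{\lambda,1}f_1E_{\lambda,2}f_2\|_2\lesssim\|f_1\|_2\|f_2\|_2$ and H\"older give $C\|b\|_{L^4}+C_b\lambda^{-1/2}$, with $C$ independent of the derivatives of $b$.
\item That weight dependence is exactly what your argument for (ii) needs. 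A Sogge-type $L^p$ bound carrying a positive power of $\lambda$ would not suffice.
\end{itemize}
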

	
	In particular, nowhere-vanishing curvature gives the decay rate
	$O(\lambda^{-1/6})$. More generally, the rate depends on the order to
	which the curvature is allowed to vanish. These estimates involve only
	the geometry and $L^2(U)$-normalized solutions on $U$, with no
	assumptions beyond $U$. In particular, the estimates apply to locally $L^2$-normalized Eisenstein series on hyperbolic surfaces.
	
	The local results are sharp in both qualitative and quantitative respects.
	Without a curvature assumption, cubic cancellation can fail: a fixed
	flat torus admits normalized eigenfunctions with
	$\int_M e_{\lambda_n}^3\dd V_g=\sqrt{2/3}$, and the Euclidean plane
	provides a local counterpart (Proposition~\ref{prop:sharp-flat}).
	Almost-everywhere nonvanishing of $K_g$ on $\{w\ne0\}$ ensures
	convergence, but gives no decay rate uniform over this class of metrics.
	Indeed, for any prescribed rate tending to zero, one can choose a fixed
	complete smooth metric with $K_g<0$ wherever $w\ne0$, together with
	normalized local solutions whose cubic moments tend to zero but exceed
	that rate by an unbounded factor along a frequency sequence
	(Proposition~\ref{prop:sharp-slow}). Under the finite-order condition,
	the exponent $1/(6+2m)$ is optimal for every $m\ge0$: it is attained by
	normalized local solutions on a fixed complete surface for all
	sufficiently large $\lambda$, while a sequence of spherical harmonics
	on the round sphere attains the exponent $1/6$
	(Proposition~\ref{prop:sharp-finite}).

		The finite-order condition in Theorem~\ref{thm:upper}\textup{(iii)}
		can also be expressed by an integral lower bound: for some $c,r_*>0$,
		\[
		\int_{B_g(x,r)}|K_g|\dd V_g\ge cr^{m+2}
		\qquad(x\in\supp w,\ 0<r<r_*),
		\]
		where $B_g(x,r)=\{y:d_g(x,y)<r\}\subset U$.
		If $(M,g)$ is connected and real analytic with $K_g\not\equiv0$,
		then some finite $m$ meets the condition on every fixed $\supp w$.
		Consequently,
		for some $\gamma>0$,
		\begin{equation}\label{eq:analytic-decay}
			\left|\int_Uw e_\lambda^3\dd V_g\right|
			\le C_{g,w,U}\lambda^{-\gamma}.
		\end{equation}

	\begin{corollary}\label{cor:constant-curvature}
		Let $(M,g)$ be a closed Riemannian surface with everywhere positive
		or everywhere negative Gaussian curvature. If
		$e_\lambda\in C^\infty(M)$ satisfies
		$-\Delta_g e_\lambda=\lambda^2e_\lambda$ on $M$, $\lambda\ge1$,
		and $\|e_\lambda\|_{L^2(M)}=1$, then
		\begin{equation}
			\left|\int_M e_\lambda^3\dd V_g\right|\le C_{M,g}\lambda^{-\frac16}.
		\end{equation}
		In particular, this holds for the standard round sphere $S^2$ and
		all closed hyperbolic surfaces.
	\end{corollary}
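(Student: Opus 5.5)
This follows directly from Theorem~\ref{thm:upper}(iii) with $m=0$, applied on the closed surface $M$ itself. Take $U=M$ and $w\equiv1\in C^\infty(M)$; the closing sentence of Theorem~\ref{thm:upper} allows exactly this choice when $M$ is closed.

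The hypothesis of part (iii) with $m=0$ asks for a $\kappa>0$ with $|K_g(x)|\ge\kappa$ for all $x\in\supp w=M$. Suppose $K_g>0$ everywhere or $K_g<0$ everywhere. Then $|K_g|$ is a continuous, nowhere-vanishing function on the compact manifold $M$, so it attains a positive minimum $\kappa=\min_M|K_g|>0$. The condition $\sum_{j=0}^{0}|\nabla^jK_g|_g=|K_g|\ge\kappa$ therefore holds on $M$.

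Theorem~\ref{thm:upper}(iii) now gives
\[
\left|\int_M e_\lambda^3\dd V_g\right|\le C_{g,1,M,0,\kappa}\lambda^{-\frac1{6}}.
\]
The constant depends only on $(M,g)$, since $w$, $U$, $m$ and $\kappa$ are all determined by the metric; call it $C_{M,g}$. The normalization $\|e_\lambda\|_{L^2(M)}=1$ and the equation $-\Delta_g e_\lambda=\lambda^2e_\lambda$ with $\lambda\ge1$ are exactly the hypotheses of the theorem.

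For the final sentence, note that the round sphere has $K_g\equiv1$ and closed hyperbolic surfaces have $K_g\equiv-1$, so both are covered. All of the analytic content sits in Theorem~\ref{thm:upper}(iii); the corollary itself presents no real obstacle.
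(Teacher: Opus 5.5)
Your proposal is correct and is exactly how the paper obtains the corollary. It is the case $m=0$, $U=M$, $w\equiv1$ of Theorem~\ref{thm:upper}\textup{(iii)}, with $\kappa=\min_M|K_g|>0$ supplied by compactness and continuity.
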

	
The same bound holds for any fixed smooth weight. In particular, it establishes the $q=3$ case of the Gaussian moment conjecture \eqref{eq:gaussian-prediction} for every $L^2$-normalized real eigenfunction on any closed negatively curved surface, although the $q=2$ case (QUE) remains open in general. For Hecke spherical harmonics on the round sphere, Böcherer, Sarnak, and Schulze-Pillot \cite[Proposition~2.4]{BSS} proved that their unweighted cubic moments tend to zero. Our result extends this to all spherical harmonics and arbitrary fixed smooth weights, with the sharp decay rate $O(\lambda^{-1/6})$.
	
	\subsection{Global cubic moments on hyperbolic surfaces}
	For closed surfaces, the local theorem already controls the full cubic
	moment. On noncompact surfaces, however, it does not directly control
	integrals against weights extending to infinity; cusps, when present,
	require separate analysis. We therefore consider complete hyperbolic
	surfaces with $K_g=-1$. Write
	$\operatorname{sys}(M)$ for the infimum of the lengths of nonconstant
	closed geodesics (with $\inf\varnothing=\infty$), and, for a smooth
	weight $w$, let
	\[
	\|w\|_{C_b^2(M)}=\sum_{j=0}^2\sup_M|\nabla^jw|_g.
	\]
	
	\begin{theorem}\label{thm:global}
		For every $s_0>0$ there exists $C_{s_0}>0$ such that the
		following holds. Let $(M,g)$ be a complete hyperbolic surface
		without boundary with $\operatorname{sys}(M)\ge s_0$.
		Every $L^2$-normalized eigenfunction
		$e_\lambda\in C^\infty(M)\cap L^2(M)$ satisfying
		$-\Delta_g e_\lambda=\lambda^2e_\lambda$, $\lambda\ge2$,
		belongs to $L^3(M)$. Moreover, for every
		$w\in C^\infty(M)$ with $\|w\|_{C_b^2(M)}<\infty$,
		\begin{equation}\label{eq:global}
			\left|\int_M w e_\lambda^3\dd V_g\right|
			\le C_{s_0}\|w\|_{C_b^2(M)}
			(\log\lambda)^{-3/2}.
		\end{equation}
		The same bound holds with $e_\lambda^3$ replaced by
		$u_1u_2u_3$, where $u_1,u_2,u_3$ are any three such
		$L^2$-normalized eigenfunctions with eigenvalue $\lambda^2$.
	\end{theorem}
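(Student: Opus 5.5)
We prove Theorem~\ref{thm:global} by localizing at the injectivity scale and running a long-time (logarithmic) version of the local argument behind Theorem~\ref{thm:upper}, with all constants uniform because $K_g\equiv-1$ and $\operatorname{sys}(M)\ge s_0$.

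\emph{Step 1: uniform geometry.} On a complete hyperbolic surface the injectivity radius at $x$ is bounded below by $\min(s_0/2,\ \text{a quantity tied to cusp depth})$. We decompose $M$ into a thick part, where the injectivity radius is at least $r_0(s_0)$, and cusp regions, where it decays like $e^{-t}$ along the cusp coordinate. On the thick part we take a partition of unity $\{\psi_j\}$ subordinate to balls $B_g(x_j,r_0)$ with bounded overlap and uniformly bounded $C^2$ norms. Each such ball is isometric to a hyperbolic disc, so a single model computation on $\HH$ applies to every piece with the same constants.

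\emph{Step 2: local cubic bound with $\lambda$-dependent radius.} On the thick part we run the local analysis of Theorem~\ref{thm:upper}(iii) with $m=0$, $\kappa=1$, but at scale $R\approx c\log\lambda$ rather than $R=O(1)$. Concretely, we write $u=\chi_R(\sqrt{-\Delta_g}-\lambda)u$ using a spectral multiplier whose Fourier transform is supported in $[-R,R]$ and normalized so that $\chi_R(0)=1$. Its kernel lifts to $\HH$ as a sum over the deck group $\Gamma$. The cubic form $\int w\,u_1u_2u_3$ then becomes a trilinear oscillatory integral whose phase, at a Fermat point, reduces through the curvature identity to a nondegenerate phase with curvature $-1$. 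On $\HH$ the free kernel $\widehat{\chi_R}$ gains an extra factor $e^{-r/2}$ from volume growth. The number of lattice translates within distance $R$ is $O(e^{R})$, uniformly because of the systole bound. Balancing the stationary-phase gain $\lambda^{-1/6}$ per unit time against the exponential count, we choose $R=c\log\lambda$ with $c$ small. The spectral smoothing then gives a $\sqrt{R}$-type averaging gain for each of the three factors, and this produces $(\log\lambda)^{-3/2}$. This mirrors how logarithmic improvements of sup and $L^p$ norms on negatively curved manifolds arise (B\'erard, Hassell--Tacy). The $u_1u_2u_3$ version comes from the trilinear form directly, via Hölder with $L^2$-normalization.

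\emph{Step 3: summation and cusps.} We sum the pieces using $|\psi_jw|_{C^2}\lesssim\|w\|_{C_b^2}$ and $\sum_j\|u\|_{L^2(B_j^*)}^2\lesssim1$ by bounded overlap. Each local trilinear bound is controlled by the product of the three local $L^2$ masses, and that product is summable by Cauchy--Schwarz together with $\sup_j\|u\|_{L^2(B_j^*)}\le1$. In the cusps (and in funnel ends, if $M$ has infinite area) we use Fourier expansion in the periodic variable. The zero mode is $a y^{1/2\pm i\lambda}$, which is excluded from $L^2$ unless $a=0$. The nonzero modes are $K$-Bessel functions decaying like $e^{-2\pi|n|y}$, which yields $e_\lambda\in L^3$ and a cusp contribution bounded by $O(\lambda^{-N})$ beyond height $y\gtrsim\lambda$. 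The transition region $1\le y\le\lambda$ we handle as part of the thin-part analysis, where the local radius is small but, by homogeneity, the same estimate holds after rescaling.

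\emph{Main obstacle.} The hardest step is Step 2: controlling the $O(e^{R})$ lattice terms in the trilinear oscillatory integral uniformly. In particular, one must show that non-identity terms carry the same curvature-driven cancellation, with phases nondegenerate uniformly in the group element. I would first try bounding off-diagonal terms crudely, using kernel decay $e^{-r/2}\lambda^{-1/2}$ and counting, and choose $c<1/6$ so that these terms are negligible. Only the identity term needs the sharp Fermat-point analysis.
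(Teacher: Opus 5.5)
There is a genuine gap in Step~3, in the cusp region between the shallow part and height $y\approx\lambda$. This is exactly where the $(\log\lambda)^{-3/2}$ in the theorem comes from. Rescaling does not work there. At height $y$ the injectivity radius is $\approx 1/y$. Rescaling such a ball to unit size gives curvature $-y^{-2}$ and frequency $\lambda/y$, so the curvature mechanism of Lemma~\ref{lem:nonzero-operator} yields only $(1+\lambda y^{-3})^{-1/6}$, which is no gain once $y\gtrsim\lambda^{1/3}$. Equivalently, a lifted ball of fixed radius in $\HH$ wraps around the cusp about $y$ times. The local $L^2$ mass of $e_\lambda\circ\pi_M$ is then only $O(y^{1/2})$, so the local estimates control the part below height $H$ only by $\mu^{-1/6}H^{1/2}$. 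Above $\mu^{1/3}$, the modes $\sqrt y\,W_\lambda(2\pi|n|y)e^{2\pi inx}$ with small $|n|$ are still oscillatory, and their mass is spread uniformly in $\log y$ up to $y\approx\mu/|n|$. Nothing forces the mass there to be small. At the injectivity scale the geometry is essentially flat, so local arguments cannot exclude resonant configurations like those in Proposition~\ref{prop:sharp-flat}. Your $O(\lambda^{-N})$ claim only covers $y\gtrsim\lambda$.

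The missing idea is a genuine Fourier--Bessel analysis of the deep cusp. The paper proceeds as follows.
\begin{enumerate}
\item It cuts at $H=8\mu^{1/4}$.
\item It replaces $w$ by its horocycle average $\overline w(y)$. The error is $O(y^{-2})$ by the $C_b^2$ bound and is absorbed by Sogge's $L^3$ bound at cost $\mu^{1/4}H^{-3/2}$. Your proposal never addresses the fact that $w$ need not be compactly supported or have a limit in the cusp, and this averaging step is what handles it.
\item It discards modes $|n|>N=\lfloor\mu^{3/4}\rfloor$, using the exponential decay of $W_\lambda$ past $2\mu$.
\item Integration in $x$ then keeps only triples with $n_1+n_2+n_3=0$.
\item Each resonant triple is bounded by Lemma~\ref{lem:bessel-triple}, giving $\mu^{-3/2}(a+b)^{-1/2}$.
\item The coefficients are controlled by the logarithmic mass bound \eqref{eq:global-mass}, which gives $\sum_{0<|n|\le N}|c_n|^2\lesssim\mu/\log\mu$.
\end{enumerate}
The product $\mu^{-3/2}(\mu/\log\mu)^{3/2}$ is the $(\log\lambda)^{-3/2}$.

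Your Step~2 puts this mechanism in the wrong place. The ``$\sqrt R$-type gain per factor'' is not derived from anything. The $O(e^R)$ lattice count is not uniform near cusps, since parabolic elements are not controlled by the systole. On the thick part, which under $\operatorname{sys}(M)\ge s_0$ also contains any funnels, no long-time argument is needed. There the fixed-scale bound \eqref{eq:weight-c2} on uniform hyperbolic discs, summed via $\sum_j m_j^3\le\sup_j m_j\sum_j m_j^2$, already gives $O(\lambda^{-1/6})$.
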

	
	The constant in Theorem~\ref{thm:global} depends on the systole lower
	bound, but not on the area or the number of cusps. Moreover, the
	weight need not have compact support or approach a limit in a cusp.
	Taking $w=1$ gives cubic-moment decay for $L^2$-normalized eigenfunctions
	on every complete finite-area
	hyperbolic surface. The same conclusion holds on complete infinite-area
	hyperbolic surfaces with positive systole, whenever such $L^2$
	eigenfunctions exist. 
	
	\begin{remark}\label{rem:cusp-free}
		By Patterson's theorem \cite{Patterson1975}, geometrically finite hyperbolic surfaces of infinite area have no $L^2$ eigenvalues in $[1/4,\infty)$. In particular, noncompact geometrically finite cusp-free surfaces admit no high-frequency $L^2$ eigenfunctions. For geometrically infinite cusp-free surfaces, the existence of such eigenfunctions remains open in general \cite[Introduction]{WeichWolf}. Nevertheless, if a complete cusp-free surface with $\operatorname{sys}(M)\ge s_0$ admits an $L^2$-normalized eigenfunction $e_\lambda$ as in Theorem~\ref{thm:global}, then \eqref{eq:global-shallow}, with $w_H=1$, yields the stronger estimate
		\[
		\left|\int_M w e_\lambda^3\dd V_g\right|
		\le C_{s_0}\|w\|_{C_b^2(M)}\lambda^{-1/6}.
		\]
		The same bound holds with $e_\lambda^3$ replaced by $u_1u_2u_3$, where the $u_j$ are any three such $L^2$-normalized eigenfunctions with eigenvalue $\lambda^2$.
	\end{remark}
	
	\subsection{Relation to previous work}
Sogge's $L^3$ estimate  gives an $O(\lambda^{1/4})$ bound for the cubic moments of $L^2$-normalized eigenfunctions on closed surfaces \cite{Sogge1988}. In contrast, we exploits cancellation to obtain uniform bounds and curvature-dependent decay for cubic moments. Arithmetic estimates for cubic moments rely on triple-product identities and automorphic $L$-functions. Watson's formula \cite[Theorem~3]{Watson} relates triple products of Hecke eigenforms to central $L$-values and yields the unweighted bound $O_\varepsilon(\lambda^{-1/6+\varepsilon})$ on the modular surface \cite[Theorem~5]{Watson}. For fixed smooth compactly supported weights, Huang \cite[Theorem~1.1]{Huang} obtained $O_{w,\varepsilon}(\lambda^{-1/12+\varepsilon})$ using spectral expansions and moments of $L$-functions. Guo proved the exponent $-1/6+\varepsilon$ for mixed cubic products \cite[Theorem~1.12]{Guo} and the weighted bound $O_{w,\varepsilon}(t^{-1/3+\varepsilon})$ for Eisenstein series $E(\cdot,\frac12+it)$ \cite[Theorem~1.2]{Guo}. Huang and Li \cite{HuangLiCubic} studied spectral averages of squared cubic moments of Hecke--Maass cusp forms and Eisenstein series. In contrast, our local estimates require no Hecke assumptions and allow vanishing or sign-changing curvature. When the curvature is nowhere zero on the support of the weight, they yield the sharp rate $\lambda^{-1/6}$ without an $\varepsilon$ loss. Our global estimate also requires neither finite area nor compact support of the weight.

On the round sphere, B\"ocherer, Sarnak, and Schulze-Pillot \cite[Proposition~2.4]{BSS} established unweighted cubic decay for Hecke spherical harmonics, while Marinucci and Wigman \cite{MarinucciWigman} proved a central limit theorem for cubic moments of Gaussian random spherical harmonics. Sarnak \cite{SarnakProducts} and Bernstein and Reznikov \cite{BernsteinReznikov1999,BernsteinReznikov2010} studied triple products using representation-theoretic methods. Bonifacio \cite{BonifacioHyperbolic} obtained spectral bounds on closed hyperbolic manifolds, while Wyman \cite{Wyman} related triple products to frequency triangles and spectral sums.

Related work on eigenfunction statistics includes local mass equidistribution of Eisenstein series by Luo and Sarnak \cite{LuoSarnak1995} and its microlocal analogue by Jakobson \cite{JakobsonEisenstein}. Quantum ergodicity in cusped settings was studied by Zelditch \cite{Zelditch1991} and Bonthonneau and Zelditch \cite{BonthonneauZelditch}, while quantum variance was investigated by Luo and Sarnak \cite{LuoSarnakVariance}, Sarnak and Zhao \cite{SarnakZhao}, Huang \cite{HuangVariance}, and Huang and Lester \cite{HuangLester}. Fourth-moment asymptotics were obtained by Buttcane and Khan \cite{ButtcaneKhan} under the generalized Lindel\"of hypothesis, Humphries and Khan \cite{HumphriesKhan} unconditionally for a dihedral family, and Djankovi\'c and Khan for regularized Eisenstein series \cite{DjankovicKhan} and averaged truncated Eisenstein series \cite{DjankovicKhanTruncated}. Ki \cite{Ki} proved subpolynomial $L^4$ bounds for Hecke--Maass cusp forms. For background on hyperbolic spectra and QUE, see Sarnak \cite{Sarnak2003,Sarnak2011} and Dyatlov \cite{Dyatlov2023}.
	
	\subsection{Proof outline and organization}
	Section~\ref{sec:local} establishes local $L^2$ and localization estimates
	from Sogge's oscillatory representation \cite{Sogge}.
	The geometric core of Section~\ref{sec:upper-proofs} is the relation
	between Fermat points and Gaussian curvature. The relevant spatial
	critical point is a local Fermat point, where the sum of three distances
	has a local minimum and the joining geodesics meet at angles of $2\pi/3$.
	Stationary phase at this Fermat point gives a trilinear oscillatory
	integral. The identity \eqref{eq:curvature-identity} expresses the
	derivative controlling nondegeneracy of the reduced phase as a
	nonvanishing multiple of Gaussian curvature at the Fermat point.
	A $TT^*$ argument and the two-sided fold estimate of Pan and Sogge
	\cite[Theorem~1.1]{PanSogge} then yield the $\lambda^{-1/6}$ rate
	when the curvature is nonzero. Localization and induction on the order
	of vanishing give the sharp finite-order rates, while a decomposition
	by curvature size gives qualitative decay under almost-everywhere
	nonvanishing. Section~\ref{sec:global} combines the local estimates
	with Fourier--Bessel analysis in the cusps to obtain global cubic decay.
	Section~\ref{sec:sharpness} constructs fixed surfaces and normalized
	solutions showing that the local results are sharp.
	
\subsection{Notation}	Constants $C,c>0$ and implicit constants may change from line to line
	and depend on fixed data and indicated parameters, but not on
	$\lambda$. For nonnegative $a,b$, we write $a\approx b$ when
	$cb\le a\le Cb$ with such constants.
	
	\subsection*{Acknowledgments.}
	The authors would like to thank Christopher Sogge and Bingrong Huang
	for helpful discussions and comments.
	C.Z. is partially supported by National Key R\&D Program of China
	No.~2024YFA1015300 and NSFC Grant No.~12371097.
	X.W. is partially supported by NSFC Grant No.~12671119.
	\section{Local integral estimates}\label{sec:local}
	
	Sogge's local oscillatory representation is the starting point for
	the $L^2$ and localization estimates developed here for
	Theorem~\ref{thm:upper}.
	
	We work with three $L^2(U)$-normalized solutions at the same frequency:
	\[
	-\Delta_gu_j=\lambda^2u_j\quad\text{on }U,\qquad
	\|u_j\|_{L^2(U)}=1,\qquad j=1,2,3.
	\]
	After a fixed finite partition of unity, we may work in normal
	coordinates such that
	\[
	\supp w\subset B(0,r_0),\qquad B(0,10r_0)\Subset U,\qquad
	\tfrac12 I\le g\le2I.
	\]
	Throughout Sections~\ref{sec:local}--\ref{sec:upper-proofs},
	$B(x,r)=\{y\in\R^2:|y-x|<r\}$ denotes a Euclidean coordinate ball.
	Choose $r_0>0$ sufficiently small that any two points of
	$B(0,10r_0)$ are joined by a unique minimizing geodesic contained
	in that ball. Unless a scale is specified, supports and coordinate neighborhoods
	are fixed. Uniform smooth bounds mean $C^N$ bounds for every fixed
	$N$, independent of $\lambda$ and the small parameters; $O_{C^N}(r)$
	denotes a remainder bounded by $C_Nr$ in $C^N$. Unless stated otherwise,
	surface integrals use $dV_g$ and parameter integrals use Lebesgue measure.
	The determinant $\text{det}_g$ is computed in an oriented
	$g$-orthonormal frame.
	
	\subsection{Sogge's representation}\label{sec:sogge}
	Choose $\chi_0\in C_c^\infty(B(0,4r_0))$ equal to one near
	$B(0,2r_0)$, and extend the metric near $\supp\chi_0$ to a closed
	surface. Let $P=\sqrt{-\Delta_g}$ and choose
	\[
	\varrho\in\mathcal S(\R),\qquad \varrho(0)=1,\qquad
	\supp\widehat\varrho\subset(\tau_0,2\tau_0),\qquad 0<4\tau_0<r_0.
	\]
	Take $\lambda$ sufficiently large that $|1+\varrho(2\lambda)|\ge1/2$.
	Bounded frequencies are covered by interior elliptic estimates.
	Finite propagation speed gives
	$[\varrho(\lambda-P)+\varrho(\lambda+P)](\chi_0u_j)
	=[1+\varrho(2\lambda)]u_j$ on $B(0,r_0)$.
	The term $\varrho(\lambda+P)(\chi_0u_j)$ is
	$O_{L^\infty}(\lambda^{-N})$ for every $N$.
	Sogge's representation \cite[Lemma~5.1.3]{Sogge}, with
	$\chi_0(y)/(1+\varrho(2\lambda))$ included in the amplitude, gives
	\begin{equation}\label{eq:sogge}
		\begin{gathered}
			u_j(x)=\lambda^{1/2}\int_Ue^{i\lambda d_g(x,y)}
			A_{\lambda,j}(x,y)u_j(y)\dd V_g(y)+R_{\lambda,j}(x),\\
			\|A_{\lambda,j}\|_{C^N}\le C_N,\qquad
			\|R_{\lambda,j}\|_{L^\infty(B(0,r_0))}\le C_N\lambda^{-N},\\
			\supp A_{\lambda,j}\subset\{(x,y):\tau_0<d_g(x,y)<2\tau_0\}.
		\end{gathered}
	\end{equation}
	After fixed finite partitions in $x,y$, choose coordinate maps
	$y_j(t,s)$ given by translations and orthogonal transformations of fixed
	bounded rectangles, and let
	\[
	\begin{gathered}
		Y_{j,t}(s)=y_j(t,s),\qquad f_{j,t}(s)=u_j(y_j(t,s)),\\
		a_{\lambda,j,t}(x,s)=A_{\lambda,j}(x,y_j(t,s))
		\sqrt{\det g(y_j(t,s))}.
	\end{gathered}
	\]
	The partition cutoffs are included in $w$ and the amplitudes.
	Extend $f_{j,t}$ by zero. For functions supported in these rectangles,
	Cauchy--Schwarz and the metric bounds give
	\begin{equation}\label{eq:iterated-input-bound}
		\int\|f(t,\cdot)\|_{L^2(ds)}\dd t
		\le C\left(\iint|f(t,s)|^2\dd t\dd s\right)^{1/2}
		\le C\|f\|_{L^2(dV_g)}.
	\end{equation}
	In particular, $\int\|f_{j,t}\|_2\dd t\le C\|u_j\|_{L^2(U)}\le C$.
	For fixed $j,t$, define
	\begin{equation}\label{eq:distance-operator}
		E_{\lambda,j,t}f(x)=\lambda^{1/2}\int
		e^{i\lambda d_g(x,Y_{j,t}(s))}a_{\lambda,j,t}(x,s)f(s)\dd s,
		\qquad \|a_{\lambda,j,t}\|_{C^N}\le C_N.
	\end{equation}
	Write $E_\lambda,Y,a_\lambda$ for one fixed pair $j,t$. Choose the
	rectangles so that, on fixed product neighborhoods of the supports,
	\[
	|\text{det}_g(Y'(s),\nabla_y d_g(x,Y(s)))|\ge c>0.
	\]
	Let $v(x,s)=-\nabla_xd_g(x,Y(s))$. Differentiating
	$Y(s)=\exp_x(d_g(x,Y(s))v(x,s))$, Gauss's lemma and the uniform
	invertibility of $d\exp_x$ give
	\begin{equation}\label{eq:direction-bounds}
		\begin{gathered}
			|v|_g=1,\qquad \langle v,\partial_sv\rangle_g=0,\\
			|\partial_sv|_g\approx
			\frac{|\text{det}_g(Y'(s),\nabla_y d_g(x,Y(s)))|}{d_g(x,Y(s))}\approx1.
		\end{gathered}
	\end{equation}
	The estimates below apply to smooth $Y$ satisfying these bounds,
	with uniformly bounded derivatives and a uniform positive distance
	from the $x$-domain.
	For $\dd t=\dd t_1\dd t_2\dd t_3$, Fubini reduces the trilinear integral
	to a fixed finite sum of
	\begin{equation}\label{eq:sogge-reduction}
		\int_{\R^3}\left(\int_Uw(x)
		\prod_{j=1}^3(E_{\lambda,j,t_j}f_{j,t_j})(x)\dd V_g(x)\right)\dd t
		+O_N(\lambda^{-N}).
	\end{equation}
	Indeed, $\|u_j\|_2+\|u_j-R_{\lambda,j}\|_2\le C$ near $\supp w$,
	so each remainder is estimated in $L^\infty$ and the other two factors
	in $L^2$. Henceforth $t_j$ are fixed and
	$E_{\lambda,j}=E_{\lambda,j,t_j}$, $Y_j=Y_{j,t_j}$, and
	$a_{\lambda,j}=a_{\lambda,j,t_j}$, with arbitrary $f_j\in L^2(\R)$
	supported in the corresponding intervals.
	
	\subsection{Two basic estimates}
	For real smooth phases $\phi(x,y)$, $x,y\in\R^d$, and smooth amplitudes
	of fixed compact support, H\"ormander's estimate
	\cite[Theorem~2.1.1]{Sogge} gives
	\begin{equation}\label{eq:nondegenerate-L2}
		|\det\phi_{xy}|\ge c>0
		\quad\Longrightarrow\quad
		\left\|\int e^{i\lambda\phi(x,y)}\chi(x,y)f(y)\dd y\right\|_2
		\le C\lambda^{-d/2}\|f\|_2,\qquad d=1,2.
	\end{equation}
	After a fixed rotation and restriction to small rectangles,
	$|\text{det}_g((1,0),v)|\ge c$. By \eqref{eq:direction-bounds},
	\[
	|\partial_{x_1}\partial_s d_g(x,Y(s))|
	=|\partial_sv|_g|\text{det}_g((1,0),v)|\ge c.
	\]
	Applying \eqref{eq:nondegenerate-L2} in $x_1,s$ at fixed $x_2$ gives
	\begin{equation}\label{eq:trace}
		\sup_{x_2}\|E_\lambda f(\cdot,x_2)\|_{L^2(dx_1)}
		+\|E_\lambda f\|_{L^2(dV_g)}\le C\|f\|_2.
	\end{equation}
	Let $\phi_j(x,s_j)=d_g(x,Y_j(s_j))$. If
	$|\text{det}_g(\nabla_x\phi_i,\nabla_x\phi_j)|\ge c$, then
	\[
	\begin{aligned}
		&|\det\partial_x\partial_{(s_i,s_j)}(\phi_i+\phi_j)|\\
		&\quad=\sqrt{\det g}\,|\partial_{s_i}\nabla_x\phi_i|_g
		|\partial_{s_j}\nabla_x\phi_j|_g
		|\text{det}_g(\nabla_x\phi_i,\nabla_x\phi_j)|\ge c.
	\end{aligned}
	\]
	Apply \eqref{eq:nondegenerate-L2} with $d=2$, phase $\phi_i+\phi_j$,
	and input $f_i\otimes f_j$. The prefactor $\lambda$ cancels its
	$\lambda^{-1}$ bound, yielding
	\begin{equation}\label{eq:bilinear}
		\|E_{\lambda,i}f_iE_{\lambda,j}f_j\|_2
		\le C\|f_i\|_2\|f_j\|_2.
	\end{equation}
	If three unit vectors sum to zero, their pairwise angles are $2\pi/3$.
	Compactness and fixed finite partitions in $x,s_1,s_2,s_3$ therefore
	reduce to either
	\begin{equation}\label{eq:angle-alternatives}
		\left|\sum_{j=1}^3\nabla_x\phi_j\right|_g\ge c
		\quad\text{or}\quad
		|\text{det}_g(\nabla_x\phi_i,\nabla_x\phi_j)|\ge c\quad(i\ne j).
	\end{equation}
	In the first case, nonstationary phase \cite[Section~1.1]{Sogge}
	gives, for fixed $b\in C_c^\infty(U)$ and every $N$,
	\begin{equation}\label{eq:nonstationary}
		\left|\int b\prod_{j=1}^3E_{\lambda,j}f_j\dd V_g\right|
		\le C_{b,N}\lambda^{-N}\prod_{j=1}^3\|f_j\|_2.
	\end{equation}
	In the second case, \eqref{eq:trace} and \eqref{eq:bilinear} give
	$|\int b\prod_{j=1}^3E_{\lambda,j}f_j\dd V_g|
	\le C\|b\|_\infty\prod_{j=1}^3\|f_j\|_2$.
	
	\begin{lemma}\label{lem:small-weight}
		For $b\in C_c^\infty(U)$ with $\supp b\subset\supp w$,
		\begin{equation}\label{eq:small-weight}
			\left|\int_Ub u_1u_2u_3\dd V_g\right|
			\le C\|b\|_{L^4(U)}+C_b\lambda^{-1/2},
		\end{equation}
		where $C$ is independent of the derivatives of $b$.
	\end{lemma}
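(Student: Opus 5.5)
We reduce, via \eqref{eq:sogge-reduction} with $w$ replaced by $b$, to bounding the trilinear integral
\[
I_b=\int_U b\prod_{j=1}^3E_{\lambda,j}f_j\dd V_g .
\]
The remainders $R_{\lambda,j}$ contribute $O(\|b\|_\infty\lambda^{-N})$, which is not of the required form, since $\|b\|_\infty$ is not controlled by $\|b\|_{L^4}$. We therefore treat the remainders more carefully. Each remainder term has the form
\[
\int b\,R_{\lambda,i}\,u_ju_k,
\]
and Hölder gives the bound
\[
\|b\|_{L^4}\,\|R_{\lambda,i}\|_{L^\infty}\,\|u_j\|_{L^{8/3}}\,\|u_k\|_{L^{8/3}}.
\]
Local Sogge $L^p$ bounds give polynomial growth of $\|u_j\|_{L^{8/3}(\supp w)}$, and $\|R\|_\infty\le C_N\lambda^{-N}$ absorbs this. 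So the remainders are $O(\|b\|_{L^4}\lambda^{-N})$. Alternatively, we can bound $\|b\|_{L^2}\le C\|b\|_{L^4}$ on the fixed support, which lets us use $L^\infty\cdot L^2\cdot L^\infty$ estimates with $\|E f\|_\infty\lesssim\lambda^{1/2}$. We would choose whichever is cleaner. We then split the $(x,s_1,s_2,s_3)$-space by the fixed finite partition of unity from \eqref{eq:angle-alternatives}. The partition cutoffs $\psi_\alpha$ are fixed smooth functions, independent of $b$.

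In the pieces where $|\text{det}_g(\nabla_x\phi_i,\nabla_x\phi_j)|\ge c$ for some $i\ne j$, we avoid $\|b\|_\infty$ and instead use Hölder in the form
\[
|I_{b,\alpha}|\le \|b\|_{L^4}\,\|E_{\lambda,i}f_iE_{\lambda,j}f_j\|_{L^2}\,\|E_{\lambda,k}f_k\|_{L^4}.
\]
The bilinear estimate \eqref{eq:bilinear} controls the middle factor by $C\|f_i\|_2\|f_j\|_2$. For the last factor we need a $\lambda$-uniform $L^4$ bound on $E_\lambda f$, and this is the main obstacle: \eqref{eq:trace} only gives $L^2$. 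We would first try to obtain it by interpolating the $L^2$ bound with a mixed-norm bound. Writing $|E f|^2=E f\,\overline{Ef}$ and using the phase difference $d_g(x,Y(s))-d_g(x,Y(s'))$, a $TT^*$ or bilinear argument would bound $\|Ef\|_{L^4}^2=\|(Ef)^2\|_{L^2}$. However, $\phi+\phi$ with the same curve is degenerate, since the gradients are parallel along the diagonal $s=s'$.

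So a cleaner route is preferable. We would apply Cauchy--Schwarz differently, as
\[
|I_{b,\alpha}|\le\|b\,E_{\lambda,k}f_k\|_{L^2}\,\|E_{\lambda,i}f_iE_{\lambda,j}f_j\|_{L^2},
\]
and then bound
\[
\|bEf\|_{L^2}\le\|b\|_{L^4}\|Ef\|_{L^4}.
\]
For $\|Ef\|_{L^4}$ we would use the one-dimensional structure: by \eqref{eq:trace}, $Ef$ is in $L^\infty_{x_2}L^2_{x_1}$, and $\partial_{x_1}$-type Sobolev/Bernstein arguments lose $\lambda$-powers. Alternatively, the Carleson--Sjölin/Hörmander $L^4$ estimate for curved oscillatory operators (curvature of $x\mapsto v(x,s)$ given by \eqref{eq:direction-bounds}) yields $\|Ef\|_{L^4}\le C\lambda^{1/2-1/2}\|f\|_2$ up to logarithms. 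If only $\lambda^\epsilon$ losses appear, we would instead put $\|b\|_{L^4}$ against $(E_if_iE_jf_j)\in L^2$ and $E_kf_k\in L^4$ after splitting $x$ into $\lambda^{-1}$-scale pieces where $Ef$ is essentially constant. We flag this $L^4$ control as the step needing the most care.

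In the nonstationary pieces, \eqref{eq:nonstationary} gives $C_b\lambda^{-N}$, which is bounded by $C_b\lambda^{-1/2}$. Here the dependence on derivatives of $b$ is allowed, and this is precisely the origin of the second term in \eqref{eq:small-weight}. Finally, we integrate over $t$ using \eqref{eq:iterated-input-bound} and sum over the fixed finitely many pieces.
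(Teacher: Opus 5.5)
There is a genuine gap at the step you flag. Your H\"older splitting $\|b\|_{L^4}\|E_{\lambda,i}f_iE_{\lambda,j}f_j\|_{L^2}\|E_{\lambda,k}f_k\|_{L^4}$ needs $\|E_\lambda f\|_{L^4}\le C\|f\|_2$ uniformly in $\lambda$. The Cauchy--Schwarz variant is the same inequality and needs the same bound. This bound is false.

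Take a Knapp input $f=\lambda^{1/4}e^{-i\lambda\kappa s}\mathbf 1_{\{|s-s_0|\le c\lambda^{-1/2}\}}$, with $\kappa$ chosen to cancel the linear term of $s\mapsto\lambda d_g(x_0,Y(s))$ at $s_0$. The $s$-integral then has no cancellation for $x$ in a tube of width $\approx\lambda^{-1/2}$ around a geodesic through $Y(s_0)$, so $|E_\lambda f|\approx\lambda^{1/4}$ there. Hence $\|E_\lambda f\|_{L^2}\approx1$ while $\|E_\lambda f\|_{L^4}\gtrsim\lambda^{1/8}$.

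Carleson--Sj\"olin/H\"ormander does not help. It gives $\|E_\lambda f\|_{L^6}\le C\lambda^{1/6}\|f\|_2$, and interpolating with \eqref{eq:trace} gives exactly $\lambda^{1/8}$ at $L^4$, which is Sogge's sharp $L^4$ exponent. This is a genuine power loss, not a logarithmic or $\lambda^\epsilon$ one, and splitting $x$ into $\lambda^{-1}$-scale pieces does not change it. So your argument yields only $C\lambda^{1/8}\|b\|_{L^4}$.

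The missing idea is to never place a single factor $E_{\lambda,k}f_k$ in $L^4$. In the second case of \eqref{eq:angle-alternatives}, the determinant lower bound holds for every pair $i\ne j$, because the three gradients lie near a configuration with pairwise angles $2\pi/3$. So \eqref{eq:bilinear} is available for all three pairs. Write $|E_{\lambda,1}f_1E_{\lambda,2}f_2E_{\lambda,3}f_3|=\prod_{i<j}|E_{\lambda,i}f_iE_{\lambda,j}f_j|^{1/2}$ and apply H\"older with four exponents equal to $4$:
\[
\left|\int b\prod_{j=1}^3E_{\lambda,j}f_j\dd V_g\right|
\le\|b\|_{L^4}\prod_{1\le i<j\le3}\|E_{\lambda,i}f_iE_{\lambda,j}f_j\|_{L^2}^{1/2}
\le C\|b\|_{L^4}\prod_{j=1}^3\|f_j\|_2 .
\]
This is the paper's proof, and it uses only the three bilinear $L^2$ estimates.

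Separately, your extra treatment of the remainders $R_{\lambda,j}$ is unnecessary. Their contribution $O(\|b\|_\infty\lambda^{-N})$ is absorbed into the $C_b\lambda^{-1/2}$ term, exactly as for the nonstationary pieces.
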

	\begin{proof}
		Use \eqref{eq:nonstationary} in the first case of
		\eqref{eq:angle-alternatives}. In the second, H\"older gives
		\[
		\left|\int b\prod_{j=1}^3E_{\lambda,j}f_j\dd V_g\right|
		\le\|b\|_4\prod_{1\le i<j\le3}\|E_{\lambda,i}f_iE_{\lambda,j}f_j\|_2^{1/2}
		\le C\|b\|_4\prod_{j=1}^3\|f_j\|_2.
		\]
		Now use \eqref{eq:iterated-input-bound} and \eqref{eq:sogge-reduction}.
	\end{proof}
	
	\subsection{Rescaling and summation}\label{sec:local-inputs}
	\begin{lemma}\label{lem:radial}
		For $E_\lambda$ in \eqref{eq:distance-operator}, choose $\rho_0>0$
		so that the preceding geometric bounds hold for
		$d_g(x,\supp w)<16\rho_0$, with $16\rho_0<\inf_{x,s}d_g(x,Y(s))$ there.
		Fix $\chi_1\in C_c^\infty(U)$ supported where $d_g(y,\supp w)<8\rho_0$
		and equal to one where $d_g(y,\supp w)\le6\rho_0$, and define
		\[
		V_\lambda(y)=\chi_1(y)\lambda^{1/2}
		\int e^{i\lambda d_g(y,Y(s))}f(s)\dd s.
		\]
		For $0<\rho\le\rho_0$ and $\lambda\rho\ge1$, there is a smooth amplitude
		$b_{\lambda,\rho}$ such that, near $\supp w$,
		\begin{equation}\label{eq:radial-factorization}
			E_\lambda f(x)=\lambda^{1/2}\rho^{-3/2}
			\int e^{i\lambda d_g(x,y)}b_{\lambda,\rho}(x,y)V_\lambda(y)\dd V_g(y)
			+R_{\lambda,\rho}(x),
		\end{equation}
		with
		\[
		\supp b_{\lambda,\rho}\subset\{\rho<d_g(x,y)<2\rho\},\qquad
		|\partial_x^\alpha\partial_y^\beta b_{\lambda,\rho}|
		\le C_{\alpha,\beta}\rho^{-|\alpha|-|\beta|},
		\]
		and
		\begin{equation}\label{eq:radial-remainder}
			\|R_{\lambda,\rho}\|_{L^\infty(\supp w)}
			\le C\lambda^{-1/2}\rho^{-1}\|f\|_2.
		\end{equation}
		The function $V_\lambda$ is independent of $\rho$ and satisfies
		\eqref{eq:trace} with the fixed amplitude $\chi_1(y)$ and a fixed cutoff
		in $s$ equal to one on $\supp f$. On $\supp b_{\lambda,\rho}$,
		$-\nabla_xd_g(x,y)=v(x,s)$ for some $s$ in the fixed interval.
	\end{lemma}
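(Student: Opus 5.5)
The plan is to \emph{define} $b_{\lambda,\rho}$ so that a uniform stationary phase computation in geodesic polar coordinates about $x$ reproduces $E_\lambda f$ to leading order. The claims about $V_\lambda$ are immediate. It is independent of $\rho$ by definition. It is an operator of the form \eqref{eq:distance-operator} with amplitude $\chi_1(y)\tilde\chi(s)$, where $\tilde\chi$ is a fixed cutoff equal to one on $\supp f$. Since $\supp\chi_1$ lies where the geometric bounds hold, \eqref{eq:trace} applies to it verbatim.

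\textbf{Polar coordinates.} Write $y=\exp_x(r\theta)$ with $\rho<r<2\rho$ and $|\theta|_g=1$, and $dV_g(y)=J(x,r,\theta)\,r\,dr\,d\theta$ with $J=1+O(r^2)$. Since $d_g(x,\supp w)$ is small and $\rho\le\rho_0$, we have $\chi_1(y)=1$ on this annulus. Hence $\int e^{i\lambda d_g(x,y)}b\,V_\lambda\,dV_g$ equals
\[
\lambda^{1/2}\int f(s)\iint e^{i\lambda\Phi(x,r,\theta,s)}\,b\,J\,r\,dr\,d\theta\,ds,
\qquad \Phi=r+d_g(\exp_x(r\theta),Y(s)).
\]
For fixed $r$, the $\theta$-derivative of $\Phi$ vanishes exactly when $y$ lies on the minimizing geodesic from $x$ to $Y(s)$, that is, when $\theta=v(x,s)$. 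There $\Phi=d_g(x,Y(s))$. Because $d_g(y,Y(s))$ is bounded below, $\partial_\theta^2\Phi\approx r\approx\rho$, and every $\theta$-derivative of $\Phi$ is $O(\rho)$. So the $\theta$-integral is a one-dimensional oscillatory integral with large parameter $\lambda\rho\ge1$ and a uniformly nondegenerate critical point.

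\textbf{Choice of $b_{\lambda,\rho}$.} Fix $\beta\in C_c^\infty((1,2))$ with $\int\beta(\tau)\tau\,d\tau=1$. By \eqref{eq:direction-bounds}, $|\partial_sv|_g\approx1$, so $s\mapsto v(x,s)$ is a diffeomorphism onto an arc of directions; let $s(x,\theta)$ be its inverse there. Set
\[
b_{\lambda,\rho}(x,\exp_x(r\theta))=\beta(r/\rho)\,\eta(x,\theta)\,
\frac{a_\lambda(x,s(x,\theta))}{J\,c(x,r,\theta)},
\]
where $c$ is the normalized leading stationary phase coefficient $e^{i\pi/4}\bigl(2\pi/(\partial_\theta^2\Phi/\rho)\bigr)^{1/2}$. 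Here $\eta$ is a cutoff equal to one on directions $v(x,s)$ with $s$ in a slightly enlarged fixed interval, and supported inside the range of $v$. This support choice gives the final assertion $-\nabla_xd_g(x,y)=v(x,s)$.

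The derivative bounds $|\partial_x^\alpha\partial_y^\beta b|\le C\rho^{-|\alpha|-|\beta|}$ hold because each $y$-derivative costs $1/r\approx1/\rho$ in polar coordinates. The $x$-derivatives act through $\exp_x$ and the factor $\beta(r/\rho)$, and also cost at most $\rho^{-1}$.

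\textbf{Leading term and remainder.} Stationary phase in $\theta$ gives the main term
\[
(\lambda\rho)^{-1/2}\rho^{1/2}c\,(bJ)|_{\theta=v}\,e^{i\lambda d_g(x,Y(s))}.
\]
Integrating $\beta(r/\rho)\,r\,dr$ contributes $\rho^2$. The prefactor $\lambda^{1/2}\rho^{-3/2}\cdot\lambda^{1/2}$ then yields exactly
\[
\lambda^{1/2}\int e^{i\lambda d_g(x,Y(s))}a_\lambda(x,s)f(s)\,ds=E_\lambda f(x),
\]
since the powers combine as $\rho^{-3/2}\cdot\rho^{-1/2}\cdot\rho^{1/2}\cdot\rho^{2}\cdot\rho^{-1/2}=1$ after absorbing the normalization in $c$. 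Points where $\eta\neq1$ contain no critical point, so they are $O((\lambda\rho)^{-N})$ relatively.

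The next stationary phase term is smaller by a relative factor $(\lambda\rho)^{-1}$. The remainder is therefore bounded by
\[
C\lambda^{1/2}(\lambda\rho)^{-1}\int|f|\,ds\le C\lambda^{-1/2}\rho^{-1}\|f\|_2,
\]
which is \eqref{eq:radial-remainder}.

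\textbf{Main obstacle.} The delicate step is the uniformity of this stationary phase expansion in $\rho$ and $\lambda$. I would handle it by rescaling: write $\Phi=\rho\,\psi$, where $\psi$ has $C^N$ bounds independent of $\rho$ and a uniformly nondegenerate critical point in $\theta$. The amplitude $b(x,\exp_x(r\theta))$ has $O(1)$ $\theta$-derivatives, since $\partial_\theta y=O(\rho)$ offsets the $\rho^{-1}$ cost of $y$-derivatives. The standard expansion with parameter $\lambda\rho$ and $C^N$-controlled error then applies uniformly in $r$, $s$ and $x$.
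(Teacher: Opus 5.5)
Your proposal is correct and follows essentially the same route as the paper. Both arguments use geodesic polar coordinates about $x$ at radius $\rho\tau$ and one-dimensional stationary phase in the angular variable with large parameter $\lambda\rho$, define $b_{\lambda,\rho}$ to cancel the Jacobian and the stationary-phase factor, and average over a normalized radial bump. The paper simply parametrizes the angle directly by $\sigma\mapsto v(x,\sigma)$ instead of inverting to $s(x,\theta)$. One small slip: your displayed power count carries a stray $\rho^{1/2}$ that you then cancel. With your definition of $b$ the correct count is simply $\rho^{-3/2}\cdot\rho^{-1/2}\cdot\rho^{2}=1$, since the factor $r$ in $bJr$ is what produces the $\rho^2$, so the argument is unaffected.
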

	\begin{proof}
		For fixed $x$, let
		\[
		\begin{gathered}
			y_\rho(\tau,\sigma)=\exp_x(\rho\tau v(x,\sigma)),\qquad 1<\tau<2,\\
			D_\rho(\tau,\sigma,s)=
			\frac{\rho\tau+d_g(y_\rho(\tau,\sigma),Y(s))-d_g(x,Y(s))}{\rho}.
		\end{gathered}
		\]
		Gauss's lemma and Taylor's formula give, uniformly in $x,\tau,\sigma,s$, 
		\[
		\begin{gathered}
			dV_g(y_\rho)=\rho|\partial_\sigma y_\rho|_g\dd\tau\dd\sigma,
			\qquad \rho^{-1}|\partial_\sigma y_\rho|_g\approx1,\\
			D_\rho=\tau(1-\langle v(x,\sigma),v(x,s)\rangle_g)
			+O_{C^N}(\rho).
		\end{gathered}
		\]
		The triangle inequality is an equality at $\sigma=s$. Thus
		\[
		D_\rho(\tau,s,s)=\partial_\sigma D_\rho(\tau,s,s)=0,\qquad
		\partial_\sigma^2D_\rho(\tau,s,s)
		=\tau|\partial_sv(x,s)|_g^2+O(\rho)\ge c.
		\]
		Take $\eta_1\in C_c^\infty((1,2))$ with
		$\int\eta_1=1$, and define $b_{\lambda,\rho}$ by
		\[
		b_{\lambda,\rho}(x,y_\rho(\tau,s))
		\frac{|\partial_sy_\rho(\tau,s)|_g}{\rho}
		=\frac{e^{-i\pi/4}}{\sqrt{2\pi}}\eta_1(\tau)a_\lambda(x,s)
		\sqrt{\partial_\sigma^2D_\rho(\tau,s,s)}.
		\]
		Extend it by zero. The map
		$(\tau,\sigma)\mapsto(y_\rho(\tau,\sigma)-x)/\rho$ and its inverse
		have uniform smooth bounds, giving the asserted derivative bounds.
		On sufficiently small fixed intervals, $\partial_\sigma^2D_\rho\ge c$.
		One-dimensional stationary phase
		\cite[Corollary~1.1.8 and (1.1.20)]{Sogge} gives
		\[
		\begin{aligned}
			&\int e^{i\lambda\rho D_\rho}
			b_{\lambda,\rho}(x,y_\rho(\tau,\sigma))
			\frac{|\partial_\sigma y_\rho(\tau,\sigma)|_g}{\rho}\dd\sigma\\
			&\quad=(\lambda\rho)^{-1/2}\eta_1(\tau)a_\lambda(x,s)
			+O((\lambda\rho)^{-3/2}).
		\end{aligned}
		\]
		The error is uniform in $x,\tau,s$. Since $\int\eta_1=1$,
		\[
		\begin{aligned}
			&\lambda\rho^{-3/2}\int
			e^{i\lambda[d_g(x,y)+d_g(y,Y(s))]}b_{\lambda,\rho}(x,y)\dd V_g(y)\\
			&\quad=\lambda^{1/2}e^{i\lambda d_g(x,Y(s))}a_\lambda(x,s)
			+O(\lambda^{-1/2}\rho^{-1}),
		\end{aligned}
		\]
		where $\lambda\rho^{-3/2}\rho^2(\lambda\rho)^{-1/2}=\lambda^{1/2}$.
		For $x$ near $\supp w$, $\chi_1=1$ on the integration set.
		Integrate against $f(s)$ and use $\|f\|_1\le C\|f\|_2$.
	\end{proof}
	
	Use $V_{\lambda,j},b_{\lambda,\rho,j},R_{\lambda,\rho,j}$ for the three
	operators. Cover $\supp w$ by $B(p_\ell,c_0\rho)$, with centers in
	$\supp w$, fixed $0<c_0<1/16$, and
	$\sum_\ell\mathbf1_{B(p_\ell,4\rho)}\le C$. On $\supp b_{\lambda,\rho,j}$,
	\[
	x\in B(p_\ell,c_0\rho)
	\quad\Longrightarrow\quad
	|y-p_\ell|\le c_0\rho+\sqrt2\,d_g(x,y)<3\rho.
	\]
	Define
	\begin{equation}\label{eq:local-inputs}
		\begin{gathered}
			g_\ell(z)=g(p_\ell+\rho z),\qquad \lambda_\ell=\lambda\rho,
			\qquad K_{g_\ell}(z)=\rho^2K_g(p_\ell+\rho z),\\
			f_{j,\ell}(z)=\rho V_{\lambda,j}(p_\ell+\rho z)\mathbf1_{B(0,3)}(z),\qquad
			m_{j,\ell}=\|f_{j,\ell}\|_{L^2(dV_{g_\ell})}
			=\|V_{\lambda,j}\|_{L^2(B(p_\ell,3\rho),g)}.
		\end{gathered}
	\end{equation}
	Here $(g_\ell)_{ab}(z)=g_{ab}(p_\ell+\rho z)$. Since
	$d_g(p_\ell+\rho x,p_\ell+\rho y)=\rho d_{g_\ell}(x,y)$ and
	$dV_g=\rho^2dV_{g_\ell}$,
	\eqref{eq:radial-factorization} becomes
	\begin{equation}\label{eq:child-operators}
		\begin{gathered}
			E_{\lambda,j}f_j(p_\ell+\rho x)
			=\rho^{-1}Q_{j,\ell}f_{j,\ell}(x)+R_{\lambda,\rho,j}(p_\ell+\rho x),\\
			Q_{j,\ell}f(x)=\lambda_\ell^{1/2}\int e^{i\lambda_\ell d_{g_\ell}(x,y)}
			b_{\lambda,\rho,j}(p_\ell+\rho x,p_\ell+\rho y)f(y)\dd V_{g_\ell}(y).
		\end{gathered}
	\end{equation}
	The amplitudes have uniform smooth bounds. As in
	Section~\ref{sec:sogge}, translations and orthogonal changes of the
	$y$ coordinates give the form \eqref{eq:distance-operator} for the inner
	$s$ integral, with $\sqrt{\det g_\ell(y)}$ included in the amplitude. Equation \eqref{eq:iterated-input-bound}
	therefore transfers trilinear estimates to $Q_{j,\ell}$ with factors
	$m_{j,\ell}$. All geodesics joining points in the $x$ and $y$ supports lie in $B(0,4)$.
	
	Assume the second condition in \eqref{eq:angle-alternatives}.
	For sufficiently small $\rho_0$, the determinant in
	\eqref{eq:bilinear} stays bounded below under $|z|\le6\rho_0$.
	Applying that estimate to $d_g(x,Y_1(s_1))+d_g(x+z,Y_2(s_2))$ gives
	\[
	\|V_{\lambda,1}(x)V_{\lambda,2}(x+z)\|_{L^2(dx)}
	\le C\|f_1\|_2\|f_2\|_2.
	\]
	Bounded overlap and $y=x+z$ yield
	\begin{equation}\label{eq:pair-input-sum}
		\sum_\ell m_{1,\ell}^2m_{2,\ell}^2
		\le C\int_{|z|\le6\rho}\int
		|V_{\lambda,1}(x)V_{\lambda,2}(x+z)|^2\dd x\dd z
		\le C\rho^2\|f_1\|_2^2\|f_2\|_2^2.
	\end{equation}
	Also $\sum_\ell m_{3,\ell}^2\le C\|V_{\lambda,3}\|_2^2\le C\|f_3\|_2^2$.
	Consequently,
	\begin{equation}\label{eq:area-input-sum}
		\sum_\ell\prod_{j=1}^3m_{j,\ell}
		\le\Bigl(\sum_\ell m_{1,\ell}^2m_{2,\ell}^2\Bigr)^{1/2}
		\Bigl(\sum_\ell m_{3,\ell}^2\Bigr)^{1/2}
		\le C\rho\prod_{j=1}^3\|f_j\|_2.
	\end{equation}
	
	For the final estimate assume $\lambda^{1/2}\rho\ge1$.
	Equations \eqref{eq:trace} and \eqref{eq:radial-remainder} give, on $\supp w$,
	\[
	\|E_{\lambda,j}f_j\|_2+
	\|E_{\lambda,j}f_j-R_{\lambda,\rho,j}\|_2\le C\|f_j\|_2,
	\qquad \|R_{\lambda,\rho,j}\|_\infty
	\le C\lambda^{-1/2}\rho^{-1}\|f_j\|_2.
	\]
	Let $\supp w_\ell\subset B(p_\ell,c_0\rho)$ and
	$\|\sum_\ell w_\ell\|_\infty\le C$. Replace each factor
	$E_{\lambda,j}f_j$ by $E_{\lambda,j}f_j-R_{\lambda,\rho,j}$ in turn.
	Each difference has one remainder in $L^\infty$ and two factors in $L^2$.
	Since the remainders are independent of $\ell$, estimate their sum with
	weight $\sum_\ell w_\ell$. Finally \eqref{eq:child-operators} and
	$\rho^2(\rho^{-1})^3=\rho^{-1}$ give
	\begin{equation}\label{eq:localization-bound}
		\begin{aligned}
			\left|\int\left(\sum_\ell w_\ell\right)
			\prod_{j=1}^3E_{\lambda,j}f_j\dd V_g\right|\le\rho^{-1}\sum_\ell
			\left|\int w_\ell(p_\ell+\rho x)
			\prod_{j=1}^3(Q_{j,\ell}f_{j,\ell})(x)\dd V_{g_\ell}(x)\right|
			+\frac C{\lambda^{1/2}\rho}\prod_{j=1}^3\|f_j\|_2.
		\end{aligned}
	\end{equation}
	
	\section{Curvature estimates and the upper bounds}\label{sec:upper-proofs}
	
	A curvature identity for the reduced phase
	is the key geometric ingredient in the proof of Theorem~\ref{thm:upper}.
	We first recall the two-sided fold theorem of Pan and Sogge, from
	which the required trilinear oscillatory estimate follows.
	
	\begin{theorem}[Pan--Sogge {\cite[Theorem~1.1]{PanSogge}}]
		\label{thm:pan-sogge}
		Let $\phi(x,y)$ be real and smooth, $x,y\in\R^2$, and let
		$a\in C_c^\infty(\R^2\times\R^2)$. Let $h=\det\phi_{xy}$.
		Suppose $|\phi_{x_1y_1}|\ge c>0$ and
		\[
		\left|\left(\partial_{y_2}
		-\frac{\phi_{x_1y_2}}{\phi_{x_1y_1}}\partial_{y_1}\right)h\right|\ge c,
		\qquad
		\left|\left(\partial_{x_2}
		-\frac{\phi_{x_2y_1}}{\phi_{x_1y_1}}\partial_{x_1}\right)h\right|\ge c
		\]
		on a fixed neighborhood of $\supp a$. Then, for $\mu\ge1$,
		\[
		\left\|\int_{\R^2}e^{i\mu\phi(x,y)}a(x,y)f(y)\dd y\right\|_{L^2(dx)}
		\le C\mu^{-5/6}\|f\|_{L^2(dy)}.
		\]
		The constant depends only on $c$, the fixed support, and finitely many
		smooth bounds for $\phi$ and $a$.
	\end{theorem}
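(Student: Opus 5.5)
This is a cited result from \cite{PanSogge}; if we had to prove it, we would follow the standard fold-singularity strategy. The idea is to decompose according to the size of $h=\det\phi_{xy}$, estimate the small-$h$ region crudely, and estimate each dyadic shell by a $TT^*$ argument whose loss is governed by the shell size. We expect the balance point $|h|\approx\mu^{-1/3}$ to produce the exponent $5/6$.

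\textbf{Coordinates and the small-$h$ region.} First change variables on both sides so that the vector fields in the two hypotheses become coordinate derivatives. Since $|\phi_{x_1y_1}|\ge c$, the fields $\partial_{y_2}-\frac{\phi_{x_1y_2}}{\phi_{x_1y_1}}\partial_{y_1}$ and $\partial_{x_2}-\frac{\phi_{x_2y_1}}{\phi_{x_1y_1}}\partial_{x_1}$ are kernel directions of $\phi_{xy}$ modulo $O(h)$. After a smooth change of coordinates we may assume $|\partial_{y_2}h|\ge c$ and $|\partial_{x_2}h|\ge c$, keeping $|\phi_{x_1y_1}|\ge c$. Let $\beta$ be a fixed bump, set $\delta_0=\mu^{-1/3}$, and split $a=a\,\beta(h/\delta_0)+\sum_{\delta_0<\delta=2^{-k}\lesssim1}a\,\beta_1(h/\delta)$, where $\beta_1$ is an annular cutoff.

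For the piece $a\,\beta(h/\delta_0)$, freeze $(x_2,y_2)$. The one-dimensional operator in $(x_1,y_1)$ has $|\phi_{x_1y_1}|\ge c$, so \eqref{eq:nondegenerate-L2} with $d=1$ gives norm $O(\mu^{-1/2})$. Because $|\partial_{y_2}h|$ and $|\partial_{x_2}h|$ are bounded below, for fixed $(x,y_1)$ the support is a $y_2$-interval of length $O(\delta_0)$, and symmetrically in $x_2$. Schur's test in the $(x_2,y_2)$ variables, applied to the operator-valued kernel, therefore gives the bound $C\mu^{-1/2}\delta_0=C\mu^{-5/6}$.

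\textbf{The dyadic shells.} For the shell at scale $\delta$, we aim to prove $\|T_\delta\|\le C\mu^{-1}\delta^{-1/2}$. Summing this geometric series over $\delta\ge\mu^{-1/3}$ is dominated by the smallest term, giving $C\mu^{-1}\mu^{1/6}=C\mu^{-5/6}$, so the total bound is $C\mu^{-5/6}$.

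To prove the shell bound, decompose further in $x_2$ and $y_2$ into intervals of length $\delta$. The kernel of $T_{\delta,I}T_{\delta,J}^*$ is an oscillatory integral in $y$. On the support $|\det\phi_{xy}|\approx\delta$, and because $h$ is non-degenerate in the fold direction, one has $|\nabla_y(\phi(x,y)-\phi(x',y))|\gtrsim|x_1-x_1'|+\delta|x_2-x_2'|+|x_2-x_2'|^2$. Integrating by parts in $y$ with cutoffs that are smooth at scale $\delta$ in the $y_2$ direction then bounds each piece by $\mu^{-1}\delta^{-1/2}$. Off-diagonal pieces $|I-J|\gg\delta$ gain decay $(\mu\delta|I-J|)^{-N}$. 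The same holds for $T^*T$, using the $x_2$ fold condition, and the Cotlar--Stein lemma then assembles the pieces.

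\textbf{Main obstacle.} The hardest step is this almost-orthogonality and integration-by-parts analysis on a single shell. The cutoffs $\beta_1(h/\delta)$ have derivatives of size $\delta^{-1}$, so the gain $\mu^{-1}\delta^{-1}$ from each integration by parts must be balanced exactly against them. This is precisely where both one-sided fold conditions are needed: the $y$-side condition controls $TT^*$ and the $x$-side condition controls $T^*T$. We would follow the Phong--Stein and Greenleaf--Seeger treatment of two-sided folds. The dependence of constants only on $c$, the support, and finitely many derivatives follows by tracking the integrations by parts.
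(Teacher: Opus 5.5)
The paper does not prove this statement. It quotes Pan--Sogge and uses the result as a black box in Lemma~\ref{lem:trilinear}. Your numerology is the right one: a near-fold bound $\mu^{-1/2}\delta$, a shell bound $\mu^{-1}\delta^{-1/2}$, balance at $\delta=\mu^{-1/3}$, and the triangle inequality across shells. However, the step you present as routine fails as written.

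\textbf{Near-fold piece.} Schur's test for the operator-valued kernel $(x_2,y_2)\mapsto T_{x_2,y_2}$ needs $\sup_{x_2}\int\|T_{x_2,y_2}\|\,dy_2\lesssim\mu^{-1/2}\delta_0$. That is, for fixed $x_2$, the set of $y_2$ with $T_{x_2,y_2}\neq0$ must have length $O(\delta_0)$. You only check that the $y_2$-section is short for fixed $(x,y_1)$. But $T_{x_2,y_2}\neq0$ as soon as $|h|<\delta_0$ at \emph{some} $(x_1,y_1)$.

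A concrete counterexample is $\phi=x_1y_1+x_1x_2y_2+\frac12x_2^2y_2+\frac12x_2y_2^2$. It has $\phi_{x_1y_1}=1$, $h=x_1+x_2+y_2$, both fold derivatives equal to $1$, and $\partial_{x_2}h=\partial_{y_2}h=1$, so it meets your normalization. Yet for each $x_2$ the frozen operators are nonzero for $y_2$ in an interval of fixed length, so your argument gives only $\mu^{-1/2}$. No change of coordinates cures this in general. The fold hypersurface $\{h=0\}$ is typically not of the form $\{\tilde h(x_2,y_2)=0\}$, because the curves $\{h(x,\cdot)=0\}$ form a genuine two-parameter family. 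In addition, the frozen amplitude $a\,\beta(h/\delta_0)$ has $(x_1,y_1)$-derivatives of size $\delta_0^{-1}$, so \eqref{eq:nondegenerate-L2} does not apply verbatim.

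What is missing is an almost-orthogonality argument driven by the fold conditions themselves:
\begin{itemize}
\item Cut $(x_2,y_2)$ into $\delta_0$-intervals $I\times J$. Each $T_{I,J}$ has norm $O(\mu^{-1/2}\delta_0)$ by freezing, suitably adapted to the rough cutoff.
\item $T_{I,J}^*T_{I',J'}=0$ unless $I,I'$ are adjacent.
\item For $|J-J'|\gg\delta_0$, $T_{I,J}^*T_{I',J'}$ is $O((\mu\delta_0|J-J'|)^{-N})$ by integration by parts in $x$. The reason is that $y'-y$ is then close to a tangent vector, of length $\approx|J-J'|$, to the curve $\{h(x,\cdot)=0\}$. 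The right condition $Rh\neq0$, with $R=\partial_{y_2}-\frac{\phi_{x_1y_2}}{\phi_{x_1y_1}}\partial_{y_1}$, says exactly that $\phi_{xy}$ does not annihilate this tangent.
\item The left condition handles $T_{I,J}T_{I',J'}^*$ symmetrically, and Cotlar--Stein then gives $\mu^{-1/2}\delta_0=\mu^{-5/6}$.
\end{itemize}
So both fold conditions, not merely $\partial_{x_2}h,\partial_{y_2}h\neq0$, are already needed in this piece.

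\textbf{Shells.} You flag this part as the main obstacle and only sketch it, but two ingredients of the sketch need correcting.

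First, the lower bound $|\nabla_y(\phi(x,y)-\phi(x',y))|\gtrsim|x_1-x_1'|+\delta|x_2-x_2'|+|x_2-x_2'|^2$ is false on the annulus $\{\beta_1(h/\delta)\neq0\}$. Take $\phi=x_1y_1+\frac12x_2y_2^2+\frac12x_2^2y_2$, so $h=x_2+y_2$. Then $\partial_{y_2}(\phi(x,y)-\phi(x',y))=\frac12(x_2-x_2')(h(x,y)+h(x',y))$, which vanishes when $x_1=x_1'$ and $h(x,y)=-h(x',y)=\delta$. You must treat $h>0$ and $h<0$ separately; on each sign the usable bound is $|x_1-x_1'|+\delta|x_2-x_2'|$.

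Second, the shell cutoff is rough at scale $\delta$ along $\nabla_yh$, which generally has a $y_1$-component. Integrating by parts in coordinate directions therefore also pays $\delta^{-1}$ against the nondegenerate gain. That gives only about $\mu^{-1}\delta^{-1}$ per shell, which sums to $\mu^{-2/3}$. The integrations by parts must instead be organized along the level curves of $h(x,\cdot)$ and transversally to them. Along the level curves, $Rh\neq0$ again keeps the nondegenerate component of the phase gradient; the same applies symmetrically to $T^*T$.

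With these repairs your outline becomes a proof in the spirit of Pan--Sogge.
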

	
	 On \(h=0\), the displayed vector fields span the right and left kernels of \(\phi_{xy}\), respectively. Their derivatives of \(h\) give the two-sided fold conditions; see Greenleaf and Seeger \cite[(2.1)]{GreenleafSeeger2002}. See also Melrose and Taylor \cite{MelroseTaylor}, Cuccagna \cite{Cuccagna}, and Hassell and Tacy \cite{HassellTacy}.
	
	\subsection{A trilinear estimate}
	The following lemma applies to the reduced phase obtained by stationary
	phase in the spatial variables.
	
	For a real phase $\Phi(s_1,s_2,s_3)$, write
	$\partial_j=\partial/\partial s_j$ and $\Phi_{ij}=\partial_i\partial_j\Phi$.
	When $\Phi_{12}\Phi_{13}\Phi_{23}\ne0$, let
	\begin{equation}\label{eq:phase-data}
		q_\Phi=\Phi_{33}-\frac{\Phi_{13}\Phi_{23}}{\Phi_{12}}.
	\end{equation}
	\begin{lemma}\label{lem:trilinear}
		On a sufficiently small fixed product of intervals, suppose
		\[
		\begin{gathered}
			\Phi_\varepsilon(s)=H(s_1+s_2+s_3)+\varepsilon\Psi_\varepsilon(s),
			\qquad |H''|\ge c,\\
			\left|(\Phi_{23}\partial_1-\Phi_{13}\partial_2)q_\Phi\right|
			\ge c\varepsilon,\qquad \Phi=\Phi_\varepsilon,
		\end{gathered}
		\]
		where $H,\Psi_\varepsilon$ are real with uniform smooth bounds.
		For $\lambda\ge1$, $0<\varepsilon\le\varepsilon_0$, with $\varepsilon_0$
		fixed and sufficiently small, smooth amplitudes $a_\lambda$ of fixed
		compact support and uniform smooth bounds, and $f_j\in L^2(\R)$ supported
		in the corresponding intervals,
		\begin{equation}\label{eq:uniform-trilinear}
			\left|\int e^{i\lambda\Phi_\varepsilon(s)}a_\lambda(s)
			\prod_{j=1}^3 f_j(s_j)\dd s\right|
			\le C\lambda^{-1/2}(1+\lambda\varepsilon)^{-1/6}\prod_{j=1}^3\|f_j\|_2.
		\end{equation}
	\end{lemma}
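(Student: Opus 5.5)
We prove \eqref{eq:uniform-trilinear} by a $TT^*$-type reduction to a two-variable oscillatory operator and then apply Theorem~\ref{thm:pan-sogge} after a rescaling adapted to $\varepsilon$.

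\emph{Small $\lambda\varepsilon$.} If $\lambda\varepsilon\le1$, it suffices to prove the bound $C\lambda^{-1/2}\prod\|f_j\|_2$. For this, view the integral as $\langle T(f_1\otimes f_2),\bar f_3\rangle$, where
\[
T F(s_3)=\int e^{i\lambda\Phi_\varepsilon(s)}a_\lambda(s)F(s_1,s_2)\dd s_1\dd s_2 .
\]
Then change variables $(s_1,s_2)\mapsto(\sigma,s_2)$ with $\sigma=s_1+s_3$ type combinations. A cleaner route is to pair $(s_1,s_2)$ against $(s_3,\cdot)$: the mixed Hessian in $(s_1;s_3)$ is $\Phi_{13}=H''+O(\varepsilon)$, which is bounded below. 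So \eqref{eq:nondegenerate-L2} with $d=1$ in the variables $s_1,s_3$, at fixed $s_2$, gives a bound $\lambda^{-1/2}\|f_1\|_2\|f_3\|_2$ for each $s_2$. Integrating in $s_2$ against $|f_2(s_2)|$ and using $\|f_2\|_1\le C\|f_2\|_2$ gives the claim.

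\emph{Large $\lambda\varepsilon$.} Regard the trilinear form as a bilinear form between $F=f_1\otimes f_2$ in $y=(s_1,s_2)$ and $G(x)$ with $x=(x_1,x_2)$, where one introduces a dummy variable so that Theorem~\ref{thm:pan-sogge} applies. Concretely, write $f_3(s_3)=\int f_3(s_3)\delta(\ldots)$, or better, apply Cauchy--Schwarz in $s_3$:
\[
|I|\le\|f_3\|_2\,\|TF\|_{L^2(ds_3)},\qquad \|TF\|_2^2=\langle T^*TF,F\rangle .
\]
The kernel of $T^*T$ is $\int e^{i\lambda[\Phi(s_1,s_2,s_3)-\Phi(s_1',s_2',s_3)]}\cdots\dd s_3$. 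Instead, I would use the equivalent form obtained by substituting $s_3=\tau-s_1$, so that $\Phi(s_1,s_2,\tau-s_1)$ becomes a phase $\phi(x,y)$ with $x=(\tau,s_2)$ and $y=(s_1,\cdot)$ after pairing. The natural choice is $x=(s_1,s_3)$, $y=(s_1',s_2)$ with $\phi=\Phi(s_1,s_2,s_3)$ restricted appropriately, so that $h=\det\phi_{xy}$ is essentially $\Phi_{12}\Phi_{33}-\Phi_{13}\Phi_{23}=\Phi_{12}\,q_\Phi$. The kernel vector fields of Theorem~\ref{thm:pan-sogge} then become $\Phi_{23}\partial_1-\Phi_{13}\partial_2$ up to nonvanishing factors. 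Hence the hypothesis $|(\Phi_{23}\partial_1-\Phi_{13}\partial_2)q_\Phi|\ge c\varepsilon$ is exactly the two-sided fold condition at size $\varepsilon$. Note that $q_\Phi=O(\varepsilon)$, since $H''-H''^2/H''=0$.

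\emph{Rescaling.} Since the fold quantities are of size $\varepsilon$, I would rescale. Write the phase as $H(\cdots)+\varepsilon\Psi$. The $H$ part produces a degenerate (rank-one) mixed Hessian that is exact, and it can be removed by a change of variables that moves $H(s_1+s_2+s_3)$ into a linear-in-$y$ phase. After this, the phase $\varepsilon\widetilde\Psi$ with frequency $\mu=\lambda\varepsilon$ satisfies the Pan--Sogge hypotheses with constants $c$ uniform in $\varepsilon$. This gives a gain of $\mu^{-5/6}$ in a 2D operator norm. Converting back, the 2D operator carries an extra $\lambda^{-1/2}$ from the nondegenerate direction $H''$ and $\mu^{-5/6}$ against the trivial $\mu^{-1}$ of the degenerate direction. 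This yields a relative gain $(\lambda\varepsilon)^{-1/6}$, i.e. the total bound $\lambda^{-1/2}(\lambda\varepsilon)^{-1/6}$.

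\emph{Main obstacle.} The delicate step is this separation of scales. One must make the rank-one $H''$ structure and the $\varepsilon$-size fold structure decouple so that Theorem~\ref{thm:pan-sogge} is applied with $\mu=\lambda\varepsilon$ and uniform constants. I would first try the change of variables $\sigma=s_1+s_2+s_3$, integrating out $\sigma$ by nondegenerate one-dimensional stationary phase via $|H''|\ge c$, which gives the $\lambda^{-1/2}$. The remaining two-variable phase is then $\varepsilon$ times a smooth function whose fold conditions are encoded by $q_\Phi/\varepsilon$, to which Pan--Sogge applies at frequency $\lambda\varepsilon$. One must verify that the smallness $\varepsilon_0$ keeps the $O(\varepsilon)$ perturbations of $\Phi_{ij}$ harmless, and that the ratio $\mu^{-5/6}/\mu^{-1}$ accounts for exactly the exponent $1/6$.
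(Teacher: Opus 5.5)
\textbf{The small-$\lambda\varepsilon$ case is fine; the large-$\lambda\varepsilon$ case has a genuine gap.} Your small-$\lambda\varepsilon$ step is correct: it is the paper's argument with the roles of $s_2,s_3$ exchanged. For large $\lambda\varepsilon$, however, you never construct an operator to which Theorem~\ref{thm:pan-sogge} applies, and that construction is the heart of the lemma.

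Cauchy--Schwarz in $s_3$ is the right start, but afterwards you must keep the tensor structure of $F=f_1\otimes f_2$. In general the operator $T\colon L^2(ds_1\,ds_2)\to L^2(ds_3)$ has norm at least $c\lambda^{-1/2}$ for every $\varepsilon$: test $F$ equal to a cutoff times $e^{-i\lambda\Phi(s_1,s_2,s_3^0)}$. So no bound on $T^*T$ over general inputs can produce the factor $(\lambda\varepsilon)^{-1/6}$.

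Your proposed variables ``$x=(s_1,s_3)$, $y=(s_1',s_2)$ with $\phi=\Phi$ restricted appropriately'' do not define a four-variable phase. Integrating out $\sigma=s_1+s_2+s_3$ by stationary phase is not allowed either, since in any coordinates containing $\sigma$ the integrand still contains the rough factors $f_j(s_j)$. The paper instead regroups the four factors crosswise. After the shear $x'=u+y$, the input $f_2(y)\overline{f_1(u+y)}$ is a generic element of $L^2(du\,dy)$ of norm $\|f_1\|_2\|f_2\|_2$, paired against $f_1(x)\overline{f_2(\eta)}$. A partial Fourier transform in $x$ then leaves $(x,z)$ as variables carrying only smooth factors. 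Stationary phase in them yields the kernel $\lambda^{-1/2}e^{i\lambda\theta_\varepsilon(p,\eta,u,y)}B_\lambda$ with $\theta_0=-pu$.

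Your identification of the fold is also off. The relevant determinant is $\det\mathcal M=\frac{\Phi_{12}\widetilde\Phi_{12}}{h_*}(q_\Phi-\widetilde q_\Phi)=\varepsilon(y-\eta)e_\varepsilon$, which vanishes on the diagonal $y=\eta$. By contrast, $q_\Phi$ itself need not vanish anywhere: $\Psi=s_3^2/2+s_1^2s_3$ satisfies the hypotheses with $q_\Phi=\varepsilon(1-2s_1)$. So a determinant of the form $\Phi_{12}q_\Phi$ would carry no fold. The hypothesis actually enters as the transversal derivative $\partial_y\det\mathcal M|_{y=\eta}=\frac{\Phi_{12}^2}{\Phi_{13}^3}(\Phi_{23}\partial_1-\Phi_{13}\partial_2)q_\Phi$.

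\textbf{The rescaling step also hides a real difficulty.} In $\lambda\theta_\varepsilon=-\lambda pu+\lambda\varepsilon\psi_\varepsilon$ the linear part has frequency $\lambda$, not $\mu=\lambda\varepsilon$. Writing the phase at frequency $\mu$ forces $p=\varepsilon p'$, with $p'$ ranging over an interval of length about $\varepsilon^{-1}$. This violates the fixed-support hypothesis of Theorem~\ref{thm:pan-sogge}. The paper handles this in three steps. It cuts $p$ into intervals of length $\varepsilon$ and removes $e^{-i\mu ku}$ by modulating the input. It applies Pan--Sogge on each piece to get $\lambda^{-1/2}\varepsilon^{1/2}\mu^{-5/6}=\lambda^{-1}\mu^{-1/3}$, where the factor $\varepsilon^{1/2}$ comes from $dp=\varepsilon\,dp'$. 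It then resums using $\|\mathcal K_{\lambda,k}\mathcal K_{\lambda,\ell}^*\|\le C\lambda^{-2}\mu^{-2/3}(1+|k-\ell|)^{-2}$, obtained by integration by parts in $u$, together with Schur's test.

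\textbf{Your exponent count does not close.} $\mu^{-5/6}$ measured against $\mu^{-1}$ is a loss of $\mu^{1/6}$, not a gain. The correct comparison is with the trivial bound $\mu^{-1/2}$ for each rescaled crossed operator, which has only one nondegenerate direction $(p',u)$. The resulting gain $\mu^{-1/3}$ in its norm becomes $\mu^{-1/6}$ after the square root coming from the Cauchy--Schwarz step in $s_3$.
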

	\begin{proof}
		If $\lambda\varepsilon\le1$, apply \eqref{eq:nondegenerate-L2} in
		$(s_1,s_2)$ for fixed $s_3$ and use $\|f_3\|_1\le C\|f_3\|_2$.
		Assume $\lambda\varepsilon\ge1$ and write $s=(x,y,z)$, $\Phi=\Phi_\varepsilon$.
		Define
		\[
		\begin{aligned}
			K_\lambda(x,\eta,u,y)
			&=\int e^{i\lambda[\Phi(x,y,z)-\Phi(u+y,\eta,z)]}
			a_\lambda(x,y,z)\overline{a_\lambda(u+y,\eta,z)}\dd z,\\
			\mathcal K_\lambda h(x,\eta)
			&=\iint K_\lambda(x,\eta,u,y)h(u,y)\dd u\dd y.
		\end{aligned}
		\]
		Cauchy--Schwarz in $z$, followed by expansion of the square and the
		substitution $u=\xi-y$, gives
		\begin{equation}\label{eq:crossed-identity}
			\begin{aligned}
				\left|\int e^{i\lambda\Phi(s)}a_\lambda(s)
				\prod_{j=1}^3f_j(s_j)\dd s\right|^2&\le\|f_3\|_2^2\iint\mathcal K_\lambda
				[f_2(y)\overline{f_1(u+y)}](x,\eta)
				f_1(x)\overline{f_2(\eta)}\dd x\dd\eta\\
				&\le\|\mathcal K_\lambda\|_{L^2\to L^2}\prod_{j=1}^3\|f_j\|_2^2.
			\end{aligned}
		\end{equation}
		Take the unitary Fourier transform in $x$ and multiply by
		$e^{i\lambda p\eta}$, obtaining the kernel
		\[
		\widehat K_\lambda(p,\eta,u,y)
		=\left(\frac\lambda{2\pi}\right)^{1/2}
		\int e^{-i\lambda p(x-\eta)}K_\lambda(x,\eta,u,y)\dd x.
		\]
		A tilde denotes evaluation at $(u+y,\eta,z)$. The critical point equations in $x,z$ and the Hessian in these
		variables are
		\[
		\Phi_1=p,\qquad \Phi_3=\widetilde\Phi_3,\qquad
		H_*=\begin{pmatrix}\Phi_{11}&\Phi_{13}\\
			\Phi_{13}&\Phi_{33}-\widetilde\Phi_{33}\end{pmatrix},
		\qquad h_*=\det H_*.
		\]
		On $y=\eta$, $x=u+\eta$ and $h_*=-\Phi_{13}^2$.
		On fixed smaller parameter sets, $h_*\le-c$ and the stationary point
		$(x_*,z_*)$ is unique and smooth. Let
		\[
		\theta_\varepsilon(p,\eta,u,y)
		=\left[\Phi(x,y,z)-\Phi(u+y,\eta,z)-p(x-\eta)\right]_{(x,z)=(x_*,z_*)}.
		\]
		Stationary phase \cite[Corollary~1.1.8 and (1.1.20)]{Sogge} on a fixed
		enlargement of the amplitude support, with the amplitude extended by zero, gives
		\begin{equation}\label{eq:transformed-kernel}
			\widehat K_\lambda
			=\lambda^{-1/2}e^{i\lambda\theta_\varepsilon}B_\lambda+r_\lambda,
			\qquad \|B_\lambda\|_{C^N}\le C_N,\qquad
			\|r_\lambda\|_{L^2(dp\,d\eta\,du\,dy)}\le C\lambda^{-3/2}.
		\end{equation}
		Here $B_\lambda$ has fixed compact support. On the complementary region, where the phase is uniformly nonstationary,
		integration by parts gives rapid decay in $\lambda$ and $1+|p|$.
		
		In the following identities, evaluate untilded derivatives at
		$(x_*,y,z_*)$ and tilded derivatives at $(u+y,\eta,z_*)$. Since
		\[
		\begin{gathered}
			\partial_p\theta_\varepsilon=-x_*+\eta,\qquad
			\partial_\eta\theta_\varepsilon=-\widetilde\Phi_2+p,\\
			H_*\binom{\partial_u x_*}{\partial_u z_*}
			=\binom0{\widetilde\Phi_{13}},\qquad
			H_*\binom{\partial_y x_*}{\partial_y z_*}
			=\binom{-\Phi_{12}}{\widetilde\Phi_{13}-\Phi_{23}},
		\end{gathered}
		\]
		the matrix $\mathcal M=\partial_{(p,\eta)}\partial_{(u,y)}\theta_\varepsilon$
		satisfies
		\begin{equation}\label{eq:crossed-determinant}
			\begin{gathered}
				\mathcal M_{11}=\frac{\Phi_{13}\widetilde\Phi_{13}}{h_*},\qquad
				\det\mathcal M=\frac{\Phi_{12}\widetilde\Phi_{12}}{h_*}
				(q_\Phi-\widetilde q_\Phi),\\
				\left.\partial_y\det\mathcal M\right|_{y=\eta}
				=\frac{\Phi_{12}^2}{\Phi_{13}^3}
				(\Phi_{23}\partial_1-\Phi_{13}\partial_2)q_\Phi.
			\end{gathered}
		\end{equation}
		For the last identity, use $\partial_yx_*=1-\Phi_{23}/\Phi_{13}$ on
		$y=\eta$. The $\partial_yz_*$ terms in
		$\partial_y(q_\Phi-\widetilde q_\Phi)$ cancel.
		
		At $\varepsilon=0$, $\theta_0=-pu$. Therefore, on fixed smaller sets,
		\begin{equation}\label{eq:crossed-normal-form}
			\begin{gathered}
				\theta_\varepsilon=-pu+\varepsilon\psi_\varepsilon,\qquad
				\det\mathcal M=\varepsilon(y-\eta)e_\varepsilon,\\
				|e_\varepsilon|\ge c,\qquad
				\|\psi_\varepsilon\|_{C^N}+\|e_\varepsilon\|_{C^N}\le C_N.
			\end{gathered}
		\end{equation}
		On $y=\eta$, the determinant vanishes and
		\eqref{eq:crossed-determinant} gives
		$|\partial_y(\varepsilon^{-1}\det\mathcal M)|\ge c$.
		
		Set $\mu=\lambda\varepsilon$. Choose
		$\vartheta\in C_c^\infty((-2,2))$ with
		$\sum_{k\in\mathbb Z}\vartheta(p/\varepsilon-k)=1$, and let
		$\mathcal K_{\lambda,k}$ have kernel
		\[
		\lambda^{-1/2}\vartheta(p/\varepsilon-k)
		e^{i\lambda\theta_\varepsilon(p,\eta,u,y)}B_\lambda(p,\eta,u,y).
		\]
		By \eqref{eq:transformed-kernel} and unitarity, it remains to estimate
		$\sum_k\mathcal K_{\lambda,k}$, up to an $O(\lambda^{-3/2})$ error in
		$L^2\to L^2$.
		In the $k$th term substitute $p=\varepsilon(k+p')$. After multiplying
		$h(u,y)$ by $e^{-i\mu ku}$, the phase at frequency $\mu$ is
		\[
		\Theta_k(p',\eta,u,y)=-p'u+\psi_\varepsilon(\varepsilon(k+p'),\eta,u,y),
		\qquad |p'|<2.
		\]
		The phase and amplitude have uniform smooth bounds. Let
		$h_k=\det(\Theta_k)_{(p',\eta),(u,y)}$:
		\[
		|(\Theta_k)_{p'u}|\ge\tfrac12,\qquad
		h_k=(y-\eta)e_\varepsilon(\varepsilon(k+p'),\eta,u,y).
		\]
		Writing $e_\varepsilon$ for its value at $(\varepsilon(k+p'),\eta,u,y)$, we obtain
		\[
		\begin{aligned}
			\left(\partial_y-\frac{(\Theta_k)_{p'y}}{(\Theta_k)_{p'u}}\partial_u\right)h_k
			&=e_\varepsilon+O(|y-\eta|),\\
			\left(\partial_\eta-\frac{(\Theta_k)_{\eta u}}{(\Theta_k)_{p'u}}\partial_{p'}\right)h_k
			&=-e_\varepsilon+O(|y-\eta|).
		\end{aligned}
		\]
		Since $|e_\varepsilon|\ge c$, both absolute values are at least $c/2$
		for small fixed $|y-\eta|$, uniformly in $k,\varepsilon$.
		Theorem~\ref{thm:pan-sogge} and $\dd p=\varepsilon\dd p'$ give
		\[
		\|\mathcal K_{\lambda,k}\|_{L^2\to L^2}
		\le C\lambda^{-1/2}\varepsilon^{1/2}\mu^{-5/6}
		=C\lambda^{-1}\mu^{-1/3}.
		\]
		For $|p/\varepsilon-k|,|\widetilde p/\varepsilon-\ell|<2$ and
		$|k-\ell|>C_0$,
		\[
		\bigl|\partial_u\{\lambda[\theta_\varepsilon(p,\eta,u,y)
		-\theta_\varepsilon(\widetilde p,\widetilde\eta,u,y)]\}\bigr|
		\ge c\mu|k-\ell|.
		\]
		Three integrations by parts in $u$ bound the kernel of
		$\mathcal K_{\lambda,k}\mathcal K_{\lambda,\ell}^*$ by
		$C\lambda^{-1}\mu^{-3}(1+|k-\ell|)^{-3}$.
		The $p,\widetilde p$ supports have lengths $O(\varepsilon)$.
		Schur's test and the preceding norm bound therefore give, for all $k,\ell$,
		\[
		\|\mathcal K_{\lambda,k}\mathcal K_{\lambda,\ell}^*\|_{L^2\to L^2}
		\le C\lambda^{-2}\mu^{-2/3}(1+|k-\ell|)^{-2}.
		\]
		Bounded overlap of the $p$ intervals and Schur's test on this matrix yield
		\[
		\|\mathcal K_\lambda\|_{L^2\to L^2}
		\le C\left(\sup_k\sum_\ell
		\|\mathcal K_{\lambda,k}\mathcal K_{\lambda,\ell}^*\|_{L^2\to L^2}\right)^{1/2}
		+C\lambda^{-3/2}
		\le C\lambda^{-1}\mu^{-1/3}.
		\]
		Taking the square root in \eqref{eq:crossed-identity} proves the lemma.
	\end{proof}
	
\subsection{Fermat point and Gaussian curvature}
For the following lemma, use a fixed geodesically convex normal coordinate
neighborhood of $0$. Choose positive $t_j\approx1$ and Euclidean unit vectors $v_j^0$
with $v_1^0+v_2^0+v_3^0=0$. Put $s=(s_1,s_2,s_3)$ and
$v^\perp=(-v_2,v_1)$ for $v=(v_1,v_2)$, and define, for $j=1,2,3$,
\[
Y_j(s_j)=t_jv_j^0+s_j(v_j^0)^\perp,\qquad
\phi_j(x,s_j)=d_g(x,Y_j(s_j)),\qquad v_j=-\nabla_x\phi_j.
\]
Restrict $x,s_j$ to small fixed neighborhoods of $0$, with all defining
geodesics contained in the coordinate neighborhood. Choose $n_j$ so that
$(v_j,n_j)$ is a positively oriented $g$-orthonormal basis. For smooth
$f=f(x,s_j)$, write $v_j(f)=\langle\nabla_xf,v_j\rangle_g$, keeping $s_j$ fixed.

\begin{lemma}\label{lem:kernel-phase}
	Suppose $K_g=\varepsilon F$, $0<\varepsilon\le\varepsilon_0$,
	$|F|\ge c>0$, and $\|F\|_{C^N}\le C_N$ on this neighborhood.
	For $\lambda\ge1$ and smooth $a_\lambda(x,s)$ of fixed compact support
	and uniform smooth bounds, there are a real phase $\Phi$ and smooth
	$b_\lambda$ such that
	\begin{equation}\label{eq:stationary-kernel}
		\lambda^{3/2}\int e^{i\lambda(\phi_1+\phi_2+\phi_3)}
		a_\lambda(x,s)\dd V_g(x)
		=\lambda^{1/2}e^{i\lambda\Phi(s)}b_\lambda(s)+O(\lambda^{-1/2}),
	\end{equation}
	where $\|b_\lambda\|_{C^N}\le C_N$ and $\Phi$ satisfies
	Lemma~\ref{lem:trilinear}. All bounds are uniform in $t_1,t_2,t_3$.
\end{lemma}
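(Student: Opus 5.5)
Our plan is to apply two-dimensional stationary phase in $x$, identify the reduced phase, and then verify the two hypotheses of Lemma~\ref{lem:trilinear}: the form $H(s_1+s_2+s_3)+\varepsilon\Psi_\varepsilon$, and the lower bound $c\varepsilon$ for $(\Phi_{23}\partial_1-\Phi_{13}\partial_2)q_\Phi$.

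\emph{Stationary phase in $x$.} The critical point equation is $\sum_j\nabla_x\phi_j=0$, i.e.\ $v_1+v_2+v_3=0$. At $s=0$ and $x=0$ this holds to leading order; the metric in normal coordinates is $I+O(|x|^2)$, so $x=0$ is an exact Fermat point when $g$ is flat. By the implicit function theorem there is a unique smooth critical point $x_*(s)$, a local Fermat point of $Y_1(s_1),Y_2(s_2),Y_3(s_3)$. The Hessian of $\sum_j\phi_j$ is $\sum_j\kappa_j\,n_j\otimes n_j$, with $\kappa_j\approx 1/\phi_j>0$ the geodesic curvatures of the distance circles. Because the $n_j$ span at $2\pi/3$ angles, this Hessian is positive definite. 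Stationary phase then gives \eqref{eq:stationary-kernel} with $\Phi(s)=\sum_j\phi_j(x_*(s),s_j)$, an amplitude $b_\lambda$ with uniform bounds, and error $\lambda^{3/2}\lambda^{-2}=\lambda^{-1/2}$. All of this is uniform in $t_j$ and $\varepsilon$.

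\emph{Structure of $\Phi$.} When $\varepsilon=0$ the metric is flat, since $K_g\equiv0$ implies flat normal coordinates. In that case $x_*$ solves a Euclidean problem and $\Phi_0(s)=\sum_j\sqrt{t_j^2+s_j^2}$ minimized over $x$. We will check this directly. The gradient of the Fermat sum with respect to $s_j$ is $\langle v_j,(v_j^0)^\perp\rangle$, given by the first variation formula. The flat Fermat problem for three lines through $t_jv_j^0$ orthogonal to $v_j^0$ reduces via $\sum_j\langle v_j,\cdot\rangle$ to a function of $s_1+s_2+s_3$ alone. Indeed, the first variation shows that $\Phi_0$ depends on $s$ only through the projection structure; concretely, one can show that $\partial_j\Phi_0=\sin\alpha_j$ with $\alpha_j$ the rotation angle, and the constraint $v_1+v_2+v_3=0$ forces all three $\alpha_j$ to be equal. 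Hence $\nabla\Phi_0\parallel(1,1,1)$, so $\Phi_0=H(s_1+s_2+s_3)$ with $H''\approx 1/\sum_j t_j\ne0$. For small $\varepsilon$, smooth dependence of the metric on $K_g=\varepsilon F$ gives $\Phi=H(\sigma)+\varepsilon\Psi_\varepsilon$. To make this rigorous, we would write $g$ in geodesic polar or Fermi form and use Jacobi fields; alternatively we can simply treat the pair $(g,\varepsilon)$ with $g-\delta=O_{C^N}(\varepsilon)$ in normal coordinates, which holds because the metric coefficients are integrals of curvature.

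\emph{The curvature identity (main obstacle).} The heart of the proof is the exact identity \eqref{eq:curvature-identity}. It states that $(\Phi_{23}\partial_1-\Phi_{13}\partial_2)q_\Phi$, evaluated at the point, equals a nonvanishing factor times $K_g(x_*)$, plus $O(\varepsilon^2)$, or else is exactly proportional to it. The plan is to differentiate $\partial_j\Phi=\langle v_j(x_*),\partial_sY_j\rangle$-type first variation formulas. The second derivatives $\Phi_{ij}$ are then expressed through $\partial_{s}x_*$ and the Hessians $\kappa_j n_j\otimes n_j$. The third derivatives in $q_\Phi$ involve the derivatives $v_j(\kappa_j)$ and $n_j(\kappa_j)$ of the geodesic curvature of distance circles. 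The Riccati equation $v_j(\kappa_j)=-\kappa_j^2-K_g$ introduces the curvature. In the flat case every such term cancels, because the combination is identically zero for $\Phi_0=H(\sigma)$. The remaining contribution is linear in $K_g(x_*)$ with a coefficient built from the $2\pi/3$ angle geometry and the $t_j$. We must show that this coefficient is bounded away from zero; we expect it to be proportional to $\sum_j\kappa_j$-type positive quantities times $\Phi_{12}^{-1}$. The Riccati equation also contributes terms involving $\nabla K_g$ along the geodesics from each $Y_j$ to $x_*$ (through $\kappa_j$ integrated). These are $O(\varepsilon)$ but not pointwise $K_g(x_*)$; they must either cancel or be absorbed by restricting to a sufficiently small neighborhood, where $|F|\ge c$ dominates. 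This bookkeeping is where we expect the difficulty to concentrate. Once the bound $\ge c\varepsilon$ is established, Lemma~\ref{lem:trilinear} applies.
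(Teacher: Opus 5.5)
There is a genuine gap. Your stationary-phase step is fine, and so is your treatment of the flat structure. The equal-rotation-angle argument via the first variation formula is a valid substitute for the paper's observation $\Phi_0=|O_1Y_1+O_2Y_2+O_3Y_3|$. To get $|H''|\ge c$ you should use the explicit $H(\sigma)=\sqrt{(t_1+t_2+t_3)^2+\sigma^2}$. Note also that $x_c(0)=0$ holds exactly for every metric in normal coordinates, not only the flat one.

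The gap is that the actual content of the lemma, the bound $|(\Phi_{23}\partial_1-\Phi_{13}\partial_2)q_\Phi|\ge c\varepsilon$, is only anticipated. You never identify the coefficient of $K_g(x_c)$ or show that it is nonzero. Your fallback for possible $\nabla K_g$ terms also cannot work. The geodesics from $x_c$ to $Y_j(s_j)$ have length $t_j\approx1$ however small the $x$-neighborhood is, and the hypotheses give only $\|F\|_{C^N}\le C_N$. So any term involving derivatives of $K_g$ along those geodesics would be of size $\varepsilon\|F\|_{C^1}$, the same order as $\varepsilon|F(x_c)|$. Such a term could neither be absorbed nor shown to have a favorable sign.

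The missing idea is that no such terms occur, because the computation collapses onto a single geodesic and a single point.
\begin{itemize}
\item Write $\nabla_x^2\phi_j=\alpha_jn_j\otimes n_j$ and $\partial_{s_j}v_j=\beta_jn_j$. Differentiating the critical-point equation gives $H_g\partial_jx_c=\beta_jn_j$.
\item This yields $\Phi_{ij}=3\alpha_k\beta_i\beta_j/(4\det H_g)$ for $\{i,j,k\}=\{1,2,3\}$. It also yields the cancellation $q_\Phi=[\partial_{s_3}^2\phi_3-\beta_3^2/\alpha_3]_{x=x_c}$, which involves only the third distance function.
\item The same relations show that $\Phi_{23}\partial_1-\Phi_{13}\partial_2$ fixes $s_3$ and moves $x_c$ exactly along $v_3$:
\[
\Phi_{23}\partial_1x_c-\Phi_{13}\partial_2x_c=-\frac{\sqrt3\,\beta_1\beta_2\beta_3}{2\det H_g}\,v_3 .
\]
\item The quantity is therefore a $v_3$-derivative at $x_c$. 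The pointwise identities $v_3(\alpha_3)=\alpha_3^2+K_g$ (Riccati), $v_3(\beta_3)=\alpha_3\beta_3$ and $v_3(\partial_{s_3}^2\phi_3)=\beta_3^2$ give $v_3(\partial_{s_3}^2\phi_3-\beta_3^2/\alpha_3)=\beta_3^2K_g/\alpha_3^2$ exactly.
\end{itemize}
This is the exact identity \eqref{eq:curvature-identity}, whose coefficient is bounded away from zero, so the quantity is $\approx\varepsilon$. Identifying $q_\Phi$ in this form and the direction $v_3$ is what eliminates the $n_j(\kappa_j)$ and nonlocal terms you expected. Without it, your argument does not verify the hypothesis of Lemma~\ref{lem:trilinear}.
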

\begin{proof}
	Let $I$ be the Euclidean metric and $J_g=\sqrt{\det g}$.
	The radial Jacobi equation and Gauss's lemma
	\cite[Section~3.2.4 and Example~3.2.12]{Petersen} give
	\[
	\begin{gathered}
		J_g(x)-1=-\varepsilon|x|^2\int_0^1t(1-t)F(tx)J_g(tx)\dd t,\\
		g-I=(J_g+1)\frac{J_g-1}{|x|^2}(|x|^2I-x\otimes x).
	\end{gathered}
	\]
	The quotient extends smoothly to $0$, and hence
	\begin{equation}\label{eq:flat-comparison}
		\|g-I\|_{C^N}\le C_N\varepsilon.
	\end{equation}
	Since $|\nabla_x\phi_j|_g=1$, there are smooth $\alpha_j,\beta_j$ with
	\[
	\nabla_x^2\phi_j=\alpha_jn_j\otimes n_j,\qquad
	\partial_{s_j}v_j=\beta_jn_j,\qquad c\le\alpha_j,\beta_j\le C.
	\]
	Here the bounds follow from \eqref{eq:flat-comparison}.
	At $(x,s)=(0,0)$, $v_j=v_j^0$ and
	$\nabla_x^2(\phi_1+\phi_2+\phi_3)\ge cI$.
	The implicit function theorem therefore gives a unique smooth $x_c(s)$ near $0$ with
	\begin{equation}\label{eq:reduced-phase}
		\left.\nabla_x(\phi_1+\phi_2+\phi_3)\right|_{x=x_c(s)}=0,
		\qquad x_c(0)=0.
	\end{equation}
	Thus $x_c(s)$ is the local Fermat point of $Y_1(s_1),Y_2(s_2),Y_3(s_3)$:
	it minimizes $\phi_1+\phi_2+\phi_3$ near $0$, and the three joining
	geodesics meet there at angles of $2\pi/3$.
	
	Define
	$\Phi(s)=\phi_1(x_c(s),s_1)+\phi_2(x_c(s),s_2)+\phi_3(x_c(s),s_3)$.
	In formulas involving only $s$, evaluate $v_j,n_j,\alpha_j,\beta_j$
	at $(x_c(s),s_j)$. Since $v_1+v_2+v_3=0$, we may interchange $1,2$ so that
	\[
	n_1=-\frac{\sqrt3}{2}v_3-\frac12n_3,\qquad
	n_2=\frac{\sqrt3}{2}v_3-\frac12n_3.
	\]
	The Hessian $H_g=\nabla_x^2(\phi_1+\phi_2+\phi_3)|_{x=x_c}$ has the
	following matrix in the basis $(v_3,n_3)$:
	\begin{equation}\label{eq:spatial-hessian}
		\begin{gathered}
			H_g=\frac14\begin{pmatrix}
				3(\alpha_1+\alpha_2)&\sqrt3(\alpha_1-\alpha_2)\\
				\sqrt3(\alpha_1-\alpha_2)&\alpha_1+\alpha_2+4\alpha_3
			\end{pmatrix},\\
			\det H_g=\frac34(\alpha_1\alpha_2+\alpha_1\alpha_3+\alpha_2\alpha_3)\ge c.
		\end{gathered}
	\end{equation}
	Stationary phase \cite[Corollary~1.1.8 and (1.1.20)]{Sogge}
	gives \eqref{eq:stationary-kernel} with
	$b_\lambda(s)=2\pi i\,a_\lambda(x_c,s)/\sqrt{\det H_g}$.
	
	Differentiating \eqref{eq:reduced-phase} gives
	$H_g\partial_jx_c=\beta_jn_j$. Differentiating $\Phi$ and using
	\eqref{eq:spatial-hessian}, we obtain
	\begin{equation}\label{eq:geometric-mixed}
		\begin{gathered}
			\Phi_{12}=\frac{3\alpha_3\beta_1\beta_2}{4\det H_g}\approx1,\qquad
			\Phi_{13}=\frac{3\alpha_2\beta_1\beta_3}{4\det H_g}\approx1,\qquad
			\Phi_{23}=\frac{3\alpha_1\beta_2\beta_3}{4\det H_g}\approx1,\\
			\Phi_{33}=\partial_{s_3}^2\phi_3(x_c,s_3)
			-\frac{3\beta_3^2(\alpha_1+\alpha_2)}{4\det H_g},\\
			q_\Phi=\Phi_{33}-\frac{\Phi_{13}\Phi_{23}}{\Phi_{12}}
			=\left[\partial_{s_3}^2\phi_3-\frac{\beta_3^2}{\alpha_3}\right]_{x=x_c}.
		\end{gathered}
	\end{equation}
	The same equations give
	\begin{equation}\label{eq:critical-direction}
		\begin{aligned}
			H_g(\Phi_{23}\partial_1x_c-\Phi_{13}\partial_2x_c)
			&=\frac{3\beta_1\beta_2\beta_3}{4\det H_g}(\alpha_1n_1-\alpha_2n_2)\\
			&=-\frac{\sqrt3\,\beta_1\beta_2\beta_3}{2\det H_g}H_gv_3.
		\end{aligned}
	\end{equation}
	Before evaluation at $x_c$, the Riccati equation
	\cite[Corollary~3.2.10]{Petersen} and the $s_3$-derivatives of
	$\nabla_{v_3}v_3=0$ and $|\nabla_x\phi_3|_g^2=1$ give
	\begin{equation}\label{eq:riccati-identities}
		v_3(\alpha_3)=\alpha_3^2+K_g,\qquad
		v_3(\beta_3)=\alpha_3\beta_3,\qquad
		v_3(\partial_{s_3}^2\phi_3)=\beta_3^2.
	\end{equation}
	Consequently,
	\begin{equation}\label{eq:directional-curvature}
		v_3\left(\partial_{s_3}^2\phi_3-\frac{\beta_3^2}{\alpha_3}\right)
		=\frac{\beta_3^2}{\alpha_3^2}K_g.
	\end{equation}
	Since $H_g$ is invertible and
	$\Phi_{23}\partial_1-\Phi_{13}\partial_2$ leaves $s_3$ fixed,
	\eqref{eq:geometric-mixed}, \eqref{eq:critical-direction}, and
	\eqref{eq:directional-curvature} give, by the chain rule,
	\begin{align}
		(\Phi_{23}\partial_1-\Phi_{13}\partial_2)q_\Phi
		&=-\frac{\sqrt3\,\beta_1\beta_2\beta_3^3}{2\alpha_3^2\det H_g}K_g(x_c),
		\label{eq:curvature-identity}\\
		\left|(\Phi_{23}\partial_1-\Phi_{13}\partial_2)q_\Phi\right|
		&\approx\varepsilon.\label{eq:curvature-nondegeneracy}
	\end{align}
	
	For $g=I$, denote the reduced phase by $\Phi_0$ and choose
	$O_j\in SO(2)$ with $O_jv_j^0=v_1^0$.
	The $2\pi/3$ angles and continuity from $s=0$ imply that
	$O_j(Y_j-x_c)$ have the same direction. Since $O_1+O_2+O_3=0$,
	\begin{equation}\label{eq:flat-phase}
		\begin{gathered}
			\Phi_0(s)=|O_1Y_1+O_2Y_2+O_3Y_3|=H(s_1+s_2+s_3),\\
			H(\sigma)=\sqrt{(t_1+t_2+t_3)^2+\sigma^2},\qquad
			H''(\sigma)=\frac{(t_1+t_2+t_3)^2}{H(\sigma)^3}\approx1.
		\end{gathered}
	\end{equation}
	By \eqref{eq:flat-comparison} and the implicit function theorem,
	\[
	\Phi=\Phi_0+\varepsilon\Psi_\varepsilon,\qquad
	\|\Psi_\varepsilon\|_{C^N}\le C_N.
	\]
	In particular, $\Phi_{ij}\approx1$ for all $1\le i,j\le3$.
	Together with \eqref{eq:curvature-nondegeneracy}, this verifies
	Lemma~\ref{lem:trilinear}.
\end{proof}

\begin{lemma}\label{lem:nonzero-operator}
	Suppose $K_g=\varepsilon F$ on a fixed neighborhood of $\supp w$, where
	$0<\varepsilon\le1$, $|F|\ge c>0$, and $\|F\|_{C^N}\le C_N$.
	For \eqref{eq:distance-operator} and weights of fixed compact support and
	uniform smooth bounds,
	\begin{equation}\label{eq:nonzero-operator}
		\left|\int w\prod_{j=1}^3 E_{\lambda,j}f_j\dd V_g\right|
		\le C(1+\lambda\varepsilon)^{-1/6}\prod_{j=1}^3\|f_j\|_2.
	\end{equation}
\end{lemma}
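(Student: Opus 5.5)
We reduce to the trilinear estimate of Lemma~\ref{lem:trilinear} by swapping the order of integration and performing stationary phase in $x$ at the Fermat point, as furnished by Lemma~\ref{lem:kernel-phase}.

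\emph{Trivial regime and case split.} If $\lambda\varepsilon\le C$, the bound follows from the second case of \eqref{eq:angle-alternatives} via \eqref{eq:trace} and \eqref{eq:bilinear}, or from \eqref{eq:nonstationary}. So assume $\lambda\varepsilon$ is large. After fixed finite partitions in $x,s_1,s_2,s_3$, pieces where $|\sum_j\nabla_x\phi_j|_g\ge c$ are $O(\lambda^{-N})$ by \eqref{eq:nonstationary}. On the remaining pieces the three directions $v_j$ nearly sum to zero, so there is a point $x_0$ with $v_1+v_2+v_3=0$.

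\emph{Normalization to the setup of Lemma~\ref{lem:kernel-phase}.}
\begin{itemize}
\item Near such a point, pass to normal coordinates centered at $x_0$ and translate $x_0$ to $0$.
\item Reparametrize each curve $Y_j$ near the base point. The operators $E_{\lambda,j}$ have the same structure for any smooth curve with the transversality bounds \eqref{eq:direction-bounds}, so one could treat general $Y_j$.
\item To match Lemma~\ref{lem:kernel-phase} literally, use Lemma~\ref{lem:radial}. This replaces $E_{\lambda,j}f_j$ by an operator with kernel $e^{i\lambda d_g(x,y)}$, $y$ at distance $\approx\rho$. Taking $\rho$ fixed and small, with rescaling as in \eqref{eq:child-operators}, the new input is an $L^2$ function of $y$.
\item Then foliate $y$ by segments $Y_j(s_j)=t_jv_j^0+s_j(v_j^0)^\perp$ with parameter $t_j\approx1$, as in Section~\ref{sec:sogge}. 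The inputs become $f_j(t_j,s_j)$ and \eqref{eq:iterated-input-bound} handles the $t$-integration.
\end{itemize}
The remainder \eqref{eq:radial-remainder} is harmless for fixed $\rho$ and large $\lambda$. The curvature hypothesis is scale-compatible: under the scaling, $K$ only gains a factor $\rho^2$, so $\varepsilon$ changes by a fixed constant.

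\emph{Main step.} For fixed $t$, Fubini gives
\[
\int w\prod_{j=1}^3E_{\lambda,j}f_j\dd V_g=\int\Bigl(\lambda^{3/2}\int e^{i\lambda(\phi_1+\phi_2+\phi_3)}a_\lambda(x,s)\dd V_g(x)\Bigr)\prod_{j=1}^3f_j(s_j)\dd s,
\]
where $a_\lambda$ is $w$ times the product of the amplitudes. By \eqref{eq:stationary-kernel} the inner integral equals $\lambda^{1/2}e^{i\lambda\Phi(s)}b_\lambda(s)+O(\lambda^{-1/2})$. The error contributes $O(\lambda^{-1/2})\prod_j\|f_j\|_1$, which is acceptable since $\lambda^{-1/2}\le(\lambda\varepsilon)^{-1/6}$ when $\varepsilon\le1$. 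The main term is bounded by Lemma~\ref{lem:trilinear}, whose hypotheses Lemma~\ref{lem:kernel-phase} verifies. This gives $\lambda^{1/2}\cdot C\lambda^{-1/2}(1+\lambda\varepsilon)^{-1/6}\prod_j\|f_j\|_2$, uniformly in $t_j$. Integrating in $t$ and summing the finite partition yields \eqref{eq:nonzero-operator}.

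\emph{Expected obstacle.} The main obstacle is the geometric normalization: making sure every localized piece near a Fermat configuration genuinely fits the straight-segment parametrization of Lemma~\ref{lem:kernel-phase}, with bounds uniform in $\varepsilon$. That lemma requires $K_g=\varepsilon F$ on a fixed neighbourhood and uses \eqref{eq:flat-comparison} from normal coordinates at the base point. My first attempt would be the Lemma~\ref{lem:radial} reduction with fixed $\rho$ described above, which converts arbitrary $Y_j$ into geodesic-distance kernels from areas, then slices these areas into the required segments.
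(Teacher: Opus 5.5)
Your proposal is correct and follows essentially the same route as the paper. You discard the nonstationary angle case, rescale at a fixed small scale $\rho$ via Lemma~\ref{lem:radial} and \eqref{eq:localization-bound}, which involves finitely many balls, effective frequency $\lambda\rho$, and effective curvature $\varepsilon\rho^2F$. You then refoliate so that each stationary piece has the form of Lemma~\ref{lem:kernel-phase}, apply Lemmas~\ref{lem:kernel-phase} and~\ref{lem:trilinear} at fixed $t$, and integrate with \eqref{eq:iterated-input-bound}, which yields $(1+\lambda\varepsilon\rho^3)^{-1/6}+(\lambda\rho)^{-1/2}\le C_\rho(1+\lambda\varepsilon)^{-1/6}$.

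One point deserves to be stated explicitly, as the paper does: $\rho$ must satisfy $\rho^2\le\varepsilon_0$. The rescaling is needed not only to produce straight segments. Lemmas~\ref{lem:kernel-phase} and~\ref{lem:trilinear} require the curvature parameter to be at most $\varepsilon_0$, whereas here $\varepsilon$ may be as large as $1$.
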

\begin{proof}
	By homogeneity, assume $\|f_j\|_2=1$. The first case of
	\eqref{eq:angle-alternatives} is $O(\lambda^{-2})$ by
	\eqref{eq:nonstationary}. Hence assume the second case.
	
	Fix $0<\rho\le\rho_0$ with $\rho^2\le\varepsilon_0$ so that the curvature
	assumptions hold on $\{y:d_g(y,\supp w)<8\rho\}$.
	Equations \eqref{eq:trace} and \eqref{eq:bilinear} cover $\lambda\rho^2<1$,
	so assume $\lambda\rho^2\ge1$.
	Use Section~\ref{sec:local-inputs} to write $w=\sum_\ell w_\ell$.
	Since $\rho$ is fixed, the number of terms is bounded independently of
	$\lambda,\varepsilon$. Equations \eqref{eq:local-inputs} and \eqref{eq:trace} give
	\[
	\begin{gathered}
		\lambda_\ell=\lambda\rho,\qquad
		K_{g_\ell}(z)=\varepsilon\rho^2F(p_\ell+\rho z),\qquad
		0<\varepsilon\rho^2\le\varepsilon_0,\\
		\|f_{j,\ell}\|_{L^2(dV_{g_\ell})}
		\le\|V_{\lambda,j}\|_{L^2(dV_g)}\le C.
	\end{gathered}
	\]
	All defining geodesics lie in the curvature neighborhood.
	After a fixed finite partition, each kernel is either nonstationary or
	has the form in Lemma~\ref{lem:kernel-phase} in normal coordinates at a
	spatial critical point. On the latter pieces, apply
	Lemmas~\ref{lem:kernel-phase} and~\ref{lem:trilinear} at fixed $t_1,t_2,t_3$,
	then integrate in these variables by \eqref{eq:iterated-input-bound}.
	Using $\|f\|_1\le C\|f\|_2$ on the fixed intervals for the kernel remainder
	and \eqref{eq:nonstationary} for the other pieces gives
	\[
	\begin{aligned}
		\left|\int w_\ell(p_\ell+\rho x)
		\prod_{j=1}^3(Q_{j,\ell}f_{j,\ell})(x)\dd V_{g_\ell}(x)\right|\le C\left[(1+\lambda\varepsilon\rho^3)^{-1/6}
		+\lambda_\ell^{-1/2}\right],
	\end{aligned}
	\]
	where $\lambda_\ell(\varepsilon\rho^2)=\lambda\varepsilon\rho^3$.
	Summing by \eqref{eq:localization-bound}, with $\rho$ fixed, yields
	\[
	\begin{aligned}
		\left|\int w\prod_{j=1}^3E_{\lambda,j}f_j\dd V_g\right|
		\le C_\rho\left[(1+\lambda\varepsilon\rho^3)^{-1/6}
		+\lambda^{-1/2}\right]\le C_\rho(1+\lambda\varepsilon)^{-1/6}.
	\end{aligned}
	\]
\end{proof}

	\subsection{Finite-order vanishing of curvature}\label{sec:finite}
	\begin{lemma}\label{lem:finite-order}
		Let $k\ge0$ be an integer. On a fixed neighborhood of $\supp w$, suppose
		\begin{equation}\label{eq:uniform-class}
			K_g=\varepsilon F,\qquad 0<\varepsilon\le1,\qquad
			\|F\|_{C^N}\le C_N,\qquad
			\max_{|\alpha|\le k}|\partial^\alpha F(x)|\ge c>0.
		\end{equation}
		For \eqref{eq:distance-operator} and weights with uniform smooth bounds,
		\begin{equation}\label{eq:all-operators}
			\left|\int w\prod_{j=1}^3 E_{\lambda,j}f_j\dd V_g\right|
			\le C(1+\lambda\varepsilon)^{-1/[2(k+3)]}\prod_{j=1}^3\|f_j\|_2.
		\end{equation}
	\end{lemma}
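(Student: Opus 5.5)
Induction on $k$. The base case $k=0$ is Lemma~\ref{lem:nonzero-operator}: there $|F|\ge c$, and the exponent is $1/6=1/[2(0+3)]$. For the inductive step, assume \eqref{eq:all-operators} holds for $k-1$ under \eqref{eq:uniform-class} with $k-1$ in place of $k$, and let $F$ satisfy the hypothesis with order $k$. Set $\mu=\lambda\varepsilon$. We may assume $\mu\ge1$, since otherwise \eqref{eq:trace}, \eqref{eq:bilinear} and \eqref{eq:nonstationary} already give the bound $C$. We may also assume the second alternative in \eqref{eq:angle-alternatives}, the first being $O(\lambda^{-N})$.

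Fix a scale $\rho\in(0,\rho_0]$, to be chosen as a negative power of $\mu$, and apply the localization of Section~\ref{sec:local-inputs}. Lemma~\ref{lem:radial} requires $\lambda\rho\ge1$, and the final summation needs $\lambda^{1/2}\rho\ge1$. Cover $\supp w$ by balls $B(p_\ell,c_0\rho)$ and split the indices $\ell$ into two classes.

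\emph{Good balls.} Here some derivative $\partial^\alpha F$ with $|\alpha|\le k-1$ has size at least $\delta$ on $B(p_\ell,8\rho)$, where $\delta$ is a threshold to be chosen. After rescaling, $K_{g_\ell}=\varepsilon\rho^2F(p_\ell+\rho z)$. Write this as $\varepsilon'F'$ with $\varepsilon'=\varepsilon\rho^{2}\delta$ and $F'=\delta^{-1}F(p_\ell+\rho\,\cdot)$. Then $F'$ has a derivative of order at most $k-1$ of size at least $\rho^{|\alpha|}$. This loses powers of $\rho$, so it is cleaner to organize the cases as follows.

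\emph{Reorganized cases.} On balls where $\max_{|\alpha|\le k-1}|\partial^\alpha F|\ge\delta$, apply the $(k-1)$ hypothesis directly at unit scale (no rescaling) to the weight $w_\ell$ cut off there, with $\varepsilon$ replaced by $\varepsilon\delta$. Use the fact that the constants depend only on the lower bound after normalizing $F/\delta$; the $C^N$ bounds of $F/\delta$ grow like $\delta^{-1}$, and this is compensated by taking the partition scale comparable to $\delta$ (Taylor expansion). On the remaining bad set, where all derivatives of order $<k$ are below $\delta$, the hypothesis forces $|\partial^\alpha F|\ge c$ for some $|\alpha|=k$. That derivative is then transversal, so the bad set lies in a neighborhood of width $O(\delta)$ of a finite union of smooth curves (implicit function theorem applied to a derivative of order $k-1$). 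A finite-order sublevel set estimate gives measure $O(\delta)$.

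\emph{Bad set.} Lemma~\ref{lem:small-weight} suffices here, or better the $L^4$ bound with a weight $b$ supported on the bad set: this gives $C\|b\|_4\lesssim\delta^{1/4}$ plus $C_b\lambda^{-1/2}$. Smoothing the cutoff at scale $\delta$ keeps $C_b\lambda^{-1/2}$ negligible when $\lambda\delta^{N}\ge1$.

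\emph{Good set.} The inductive bound gives $(1+\mu\delta^{a})^{-1/[2(k+2)]}$ for an exponent $a$ recording the rescaling loss.

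\emph{Balancing.} The sharp exponent $1/[2(k+3)]$ arises from $\delta^{1/4}$ against the good-set bound with effective curvature $\mu\delta^{2}$ after rescaling to scale $\delta$. The precise bookkeeping must be chosen so that the optimization yields exactly $\mu^{-1/[2(k+3)]}$. A natural choice is to rescale near the bad set to balls of radius $\rho=\delta$ using \eqref{eq:localization-bound} and \eqref{eq:area-input-sum}. The area gain $\rho$ from \eqref{eq:area-input-sum} cancels the $\rho^{-1}$ prefactor, and in each rescaled ball the curvature is $\varepsilon\rho^{2}\cdot\rho^{k}$ times a function with a nondegenerate derivative of order $\le k$ in rescaled units.

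\emph{Main obstacle.} This exponent bookkeeping is where I expect the difficulty. Using the $k$-th order Taylor polynomial, the rescaled $F$ on a ball of radius $\rho$ around a point of order-$k$ vanishing becomes $\rho^{k}\tilde F$ with $\tilde F$ satisfying \eqref{eq:uniform-class} at order $k$ but with uniform bounds. That alone does not lower the order, so I would instead iterate via a Whitney-type stopping time. Choose each ball maximal such that $|F|\gtrsim\rho^{k}$ somewhere in a proportion of it, then apply Lemma~\ref{lem:nonzero-operator} on the rescaled ball with effective parameter $\lambda\rho\cdot\varepsilon\rho^{2}\rho^{k}=\mu\rho^{k+3}$, giving $(\mu\rho^{k+3})^{-1/6}$. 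Summing with the area factor leaves at most a contribution from balls of size $\rho_*=\mu^{-1/(k+3)}$, bounded by \eqref{eq:trace}/\eqref{eq:bilinear}. Interpolating these yields exactly $(1+\mu)^{-1/[2(k+3)]}$, and checking that this summation closes is the key point.
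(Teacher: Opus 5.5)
Your proposal has a genuine gap, and you flag it yourself at the end: nothing in it produces the exponent $1/[2(k+3)]$. What is missing is a way to turn the small area of the near-degenerate region into a gain of the \emph{square root} of its width. Lemma~\ref{lem:small-weight} on a set of area $\delta$ gives only $\|b\|_{L^4}\approx\delta^{1/4}$. Where a $(k-1)$-st derivative of $F$ has size $\approx r$, localizing at scale $\approx r$ gives an effective frequency that can be as small as $\mu r^{k+3}$ (your $\mu=\lambda\varepsilon$). So the band of width $\mu^{-1/(k+3)}$ must be handled by a size estimate, and an $L^4$ estimate there yields at best $\mu^{-1/[4(k+3)]}$. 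Your proposed balancing of $\delta^{1/4}$ against $(\mu\delta^2)^{-1/[2(k+2)]}$ in fact optimizes to $\mu^{-1/[2(k+6)]}$.

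The Whitney alternative does not close either. The bound \eqref{eq:area-input-sum} only cancels the prefactor $\rho^{-1}$ in \eqref{eq:localization-bound}. Each scale therefore contributes $(\mu\rho^{k+3})^{-1/6}$ with no factor from the area it covers, so the sum is dominated by $\rho_*$, and the $\rho_*$-balls bounded through \eqref{eq:trace} and \eqref{eq:bilinear} contribute $O(1)$. In addition, Lemma~\ref{lem:nonzero-operator} needs $|F|\gtrsim\rho^k$ on the whole rescaled ball, not on a proportion of it. For $k\ge2$ the zero set of $F$ can run through the region where a $(k-1)$-st derivative has a given size (e.g.\ $F=x_1^2-x_2^2$ with $h=\partial_{x_1}F$), so such balls do not cover it.

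The missing idea is an $L^2$ band estimate. After a fixed partition, either the order-$(k-1)$ hypothesis holds outright, or some $h_F=\partial^\beta F$ with $|\beta|=k-1$ has $|\nabla h_F|\ge c$. In the second alternative of \eqref{eq:angle-alternatives} the directions $\nabla_x\phi_j$ are pairwise transversal, so $\nabla h_F$ is non-orthogonal to one of them, say $\nabla_x\phi_3$. In coordinates $(x_1,t)$ with $t=h_F(x)$ this gives $\bigl|(\partial_{x_1}|_{t})\partial_{s_3}\phi_3\bigr|\ge c$. Then \eqref{eq:nondegenerate-L2} at fixed $t$ bounds $E_{\lambda,3}f_3$ and $V_{\lambda,3}$ uniformly in $L^2$ on each level curve of $h_F$, and integrating over $|t|<r$ gives
\[
\|E_{\lambda,3}f_3\|_{L^2(\{|h_F|<r\})}+\|V_{\lambda,3}\|_{L^2(\{|h_F|<r\})}\le Cr^{1/2}.
\]
With \eqref{eq:bilinear} for the other two factors, the core $\{|h_F|<2\delta\}$, $\delta=\mu^{-1/(k+3)}$, contributes $\delta^{1/2}$.

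For the rest, take dyadic shells $\{r\le|h_F|\le4r\}$ with $r=2^q\delta$, localize at $\rho=a_*r$, and normalize by $\nu_\ell=\max_{|\alpha|\le k-1}\rho^{|\alpha|}|\partial^\alpha F(p_\ell)|\ge\rho^{k-1}r$. For small $a_*$, $F(p_\ell+\rho\,\cdot)/\nu_\ell$ satisfies the order-$(k-1)$ hypothesis uniformly, with effective parameter $\gtrsim\mu r^{k+3}$. The band estimate gives $\sum_\ell m_{3,\ell}^2\le Cr$, hence $\sum_\ell\prod_jm_{j,\ell}\le C\rho r^{1/2}$. Each shell then contributes $r^{1/2}(\mu r^{k+3})^{-1/[2(k+2)]}=\delta^{1/2}2^{-q/[2(k+2)]}$. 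The geometric sum is $O(\delta^{1/2})=O(\mu^{-1/[2(k+3)]})$, which is the claimed bound.
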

	\begin{proof}
		The case $k=0$ is Lemma~\ref{lem:nonzero-operator}. Let $k\ge1$ and
		assume the conclusion for $k-1$. By \eqref{eq:nonstationary},
		\eqref{eq:trace}, and \eqref{eq:bilinear}, it suffices to consider
		the second case of \eqref{eq:angle-alternatives}, with
		$\lambda\varepsilon$ sufficiently large and $\|f_j\|_2=1$.
		Write $x=(x_1,x_2)$ throughout.

		After a fixed finite partition, either
		$\max_{|\alpha|\le k-1}|\partial^\alpha F|\ge c'$, in which case
		the induction hypothesis gives
		\[
		\left|\int w\prod_{j=1}^3E_{\lambda,j}f_j\dd V_g\right|
		\le C(1+\lambda\varepsilon)^{-1/[2(k+2)]}
		\le C(1+\lambda\varepsilon)^{-1/[2(k+3)]},
		\]
		or, interchanging $x_1,x_2$ if necessary, a fixed multi-index $\beta$ satisfies
		\begin{equation}\label{eq:curvature-derivative}
			h_F=\partial^\beta F,\qquad |\beta|=k-1,\qquad
			|\partial_{x_2}h_F|\ge c'>0.
		\end{equation}
		By \eqref{eq:angle-alternatives},
		$\max_{1\le j\le3}|\langle\nabla_xh_F,\nabla_x\phi_j\rangle_g|
		\ge c|\nabla_xh_F|_g\ge c$.
		Refine the fixed partition and relabel so that
		$|\langle\nabla_xh_F,\nabla_x\phi_3\rangle_g|\ge c$ on a fixed
		neighborhood of $\supp w$, and choose $\chi_1$ in
		Lemma~\ref{lem:radial} supported there.
		Use coordinates $(x_1,t)$ with $t=h_F(x)$ and write $x_2=x_2(x_1,t)$.
		Differentiating $h_F(x_1,x_2(x_1,t))=t$ at fixed $t$ gives
		\[
		\partial_{x_1}h_F+\partial_{x_2}h_F
		\left.\frac{\partial x_2}{\partial x_1}\right|_t=0,
		\qquad
		\left.\partial_{x_1}\right|_t
		=\partial_{x_1}
		-\frac{\partial_{x_1}h_F}{\partial_{x_2}h_F}\partial_{x_2}.
		\]
		By \eqref{eq:direction-bounds},
		\[
		\begin{gathered}
			\left|\left.\partial_{x_1}\right|_t\partial_{s_3}\phi_3\right|
			=\frac{\sqrt{\det g}}{|\partial_{x_2}h_F|}
			|\partial_{s_3}\nabla_x\phi_3|_g
			|\langle\nabla_xh_F,\nabla_x\phi_3\rangle_g|\ge c,\quad\quad
			dV_g=\frac{\sqrt{\det g}}{|\partial_{x_2}h_F|}\dd x_1\dd t.
		\end{gathered}
		\]
		Applying \eqref{eq:nondegenerate-L2} in $x_1,s_3$ for fixed $t$ gives,
		with the functions written in these coordinates,
		\[
		\sup_t\left(
		\|(E_{\lambda,3}f_3)(\cdot,t)\|_{L^2(dx_1)}
		+\|V_{\lambda,3}(\cdot,t)\|_{L^2(dx_1)}\right)
		\le C\lambda^{1/2}\lambda^{-1/2}=C.
		\]
		For $r>0$, integration of the squared bounds over $|t|<r$ yields
		\begin{equation}\label{eq:derivative-band}
			\|E_{\lambda,3}f_3\|_{L^2(\{|h_F|<r\})}
			+\|V_{\lambda,3}\|_{L^2(\{|h_F|<r\})}\le Cr^{1/2}.
		\end{equation}

		Set
		\begin{equation}\label{eq:delta}
			\delta=(\lambda\varepsilon)^{-1/(k+3)}.
		\end{equation}
		Choose an even $\chi\in C_c^\infty((-2,2))$, equal to one on $[-1,1]$,
		with $0\le\chi\le1$. Define
		\[
		b_r=w[\chi(h_F/(2r))-\chi(h_F/r)],\qquad
		w=w\chi(h_F/\delta)+\sum_{q\ge0}b_{2^q\delta}.
		\]
		The sum is finite, and $\supp b_r\subset\{r\le|h_F|\le4r\}$.
		By \eqref{eq:bilinear} and \eqref{eq:derivative-band},
		\begin{equation}\label{eq:core}
			\begin{aligned}
				\left|\int w\chi(h_F/\delta)
				\prod_{j=1}^3E_{\lambda,j}f_j\dd V_g\right|
				\le C\|E_{\lambda,1}f_1E_{\lambda,2}f_2\|_2
				\|E_{\lambda,3}f_3\|_{L^2(\{|h_F|<2\delta\})}&\le C\delta^{1/2}.
			\end{aligned}
		\end{equation}

		It remains to estimate $b_r$ for $r=2^q\delta$.
		Fix $a_*>0$ sufficiently small and use Section~\ref{sec:local-inputs}
		at scale $\rho=a_*r\le\min(\rho_0,1)$:
		\[
		b_r=\sum_\ell w_\ell,\qquad
		\supp w_\ell\subset B(p_\ell,c_0\rho),\qquad
		|\partial^\alpha w_\ell|\le C_\alpha\rho^{-|\alpha|}.
		\]
		The balls $B(p_\ell,4\rho)$ have bounded overlap. Since
		$p_\ell\in\supp b_r$ and
		$|h_F(x)-h_F(p_\ell)|\le4\rho\|\nabla_xh_F\|_\infty\le Ca_*r$
		for $x\in B(p_\ell,4\rho)$,
		\[
		B(p_\ell,4\rho)\subset\{r/2<|h_F|<5r\}.
		\]
		To apply the induction hypothesis on these balls, set
		\begin{equation}\label{eq:curvature-rescaling}
			\nu_\ell=\max_{|\alpha|\le k-1}\rho^{|\alpha|}|\partial^\alpha F(p_\ell)|,
			\qquad F_\ell(z)=\frac{F(p_\ell+\rho z)}{\nu_\ell},\qquad
			\varepsilon_\ell=\varepsilon\rho^2\nu_\ell.
		\end{equation}
		Then
		\[
		\rho^{k-1}r\le\rho^{k-1}|h_F(p_\ell)|\le\nu_\ell\le C,
		\qquad \frac{\rho^k}{\nu_\ell}\le a_*,\qquad
		0<\varepsilon_\ell\le C\rho^2\le1.
		\]
		By differentiation,
		\begin{equation}\label{eq:child-bounds}
			\max_{|\alpha|\le k-1}|\partial^\alpha F_\ell(0)|=1,\qquad
			\max_{|\alpha|=k}\|\partial^\alpha F_\ell\|_{C^N(B(0,4))}
			\le C_N\frac{\rho^k}{\nu_\ell}\le C_Na_*.
		\end{equation}
		Taylor's formula and \eqref{eq:child-bounds} give, for
		$z\in B(0,4)$ and $|\alpha|\le k-1$,
		\[
		\begin{aligned}
		\partial^\alpha F_\ell(0)
		&=\sum_{|\gamma|\le k-1-|\alpha|}
		\frac{(-z)^\gamma}{\gamma!}\partial^{\alpha+\gamma}F_\ell(z)+O(a_*),\\
		\partial^\alpha F_\ell(z)
		&=\sum_{|\gamma|\le k-1-|\alpha|}
		\frac{z^\gamma}{\gamma!}\partial^{\alpha+\gamma}F_\ell(0)+O(a_*).
		\end{aligned}
		\]
		Since $\max_{|\alpha|\le k-1}|\partial^\alpha F_\ell(0)|=1$, these imply
		\[
		1\le C_k\max_{|\alpha|\le k-1}|\partial^\alpha F_\ell(z)|+C_ka_*,
		\qquad
		\max_{|\alpha|\le k-1}|\partial^\alpha F_\ell(z)|\le C_k.
		\]
		Choose $a_*$ so that $C_ka_*\le1/2$. Together with
		\eqref{eq:child-bounds} for derivatives of order at least $k$, this gives
		\[
		\|F_\ell\|_{C^N(B(0,4))}\le C_N,\qquad
		\inf_{z\in B(0,4)}\max_{|\alpha|\le k-1}
		|\partial^\alpha F_\ell(z)|\ge\frac1{2C_k}>0.
		\]
		For $g_\ell,\lambda_\ell$ from \eqref{eq:local-inputs},
		\begin{equation}\label{eq:effective-frequency}
			K_{g_\ell}=\varepsilon_\ell F_\ell,\qquad
			\lambda_\ell\varepsilon_\ell=\lambda\varepsilon\rho^3\nu_\ell
			\ge a_*^{k+2}\lambda\varepsilon r^{k+3}.
		\end{equation}
		For large $\lambda\varepsilon$,
		$\lambda^{1/2}\rho\ge a_*\lambda^{1/2}\delta
		=a_*\varepsilon^{-1/2}(\lambda\varepsilon)^{(k+1)/[2(k+3)]}\ge1$.
		The preceding bounds verify \eqref{eq:uniform-class} with $k-1$ in place of $k$.
		Apply the induction hypothesis to the inner $s_j$ integrals in $Q_{j,\ell}$,
		then integrate $t_1,t_2,t_3$ by \eqref{eq:iterated-input-bound} to obtain
		\[
		\begin{aligned}
			&\left|\int w_\ell(p_\ell+\rho x)
			\prod_{j=1}^3(Q_{j,\ell}f_{j,\ell})(x)\dd V_{g_\ell}(x)\right|\\
			&\quad\le C(1+\lambda_\ell\varepsilon_\ell)^{-1/[2(k+2)]}
			\prod_{j=1}^3m_{j,\ell}
			\le C_{a_*}(\lambda\varepsilon r^{k+3})^{-1/[2(k+2)]}
			\prod_{j=1}^3m_{j,\ell}.
		\end{aligned}
		\]
		By bounded overlap, \eqref{eq:derivative-band}, and \eqref{eq:pair-input-sum},
		\[
		\begin{aligned}
			\sum_\ell m_{3,\ell}^2
			&\le C\|V_{\lambda,3}\|_{L^2(\{|h_F|<5r\})}^2\le Cr,\\
			\sum_\ell\prod_{j=1}^3m_{j,\ell}
			&\le\left(\sum_\ell m_{1,\ell}^2m_{2,\ell}^2\right)^{1/2}
			\left(\sum_\ell m_{3,\ell}^2\right)^{1/2}
			\le C\rho r^{1/2}.
		\end{aligned}
		\]
		Equation \eqref{eq:localization-bound} now gives
		\begin{equation}\label{eq:shell}
			\begin{aligned}
				\left|\int b_r\prod_{j=1}^3E_{\lambda,j}f_j\dd V_g\right|
				&\le C_{a_*}\rho^{-1}(\lambda\varepsilon r^{k+3})^{-1/[2(k+2)]}
				\sum_\ell\prod_{j=1}^3m_{j,\ell}+\frac{C}{\lambda^{1/2}\rho}\\
				&\le C_{a_*}\left[
				r^{1/2}(\lambda\varepsilon r^{k+3})^{-1/[2(k+2)]}
				+\lambda^{-1/2}r^{-1}\right].
			\end{aligned}
		\end{equation}
		Since $r=2^q\delta$ and $\lambda\varepsilon\delta^{k+3}=1$,
		\begin{equation}\label{eq:product-error}
			\begin{gathered}
				r^{1/2}(\lambda\varepsilon r^{k+3})^{-1/[2(k+2)]}
				=\delta^{1/2}2^{-q/[2(k+2)]},\\
				\lambda^{-1/2}r^{-1}
				=\varepsilon^{1/2}\delta^{(k+1)/2}2^{-q}
				\le\delta^{1/2}2^{-q}.
			\end{gathered}
		\end{equation}
		Summing \eqref{eq:core} and \eqref{eq:shell}, we obtain
		\[
		\left|\int w\prod_{j=1}^3E_{\lambda,j}f_j\dd V_g\right|
		\le C_k\delta^{1/2}
		\left(1+\sum_{q\ge0}2^{-q/[2(k+2)]}\right)
		\le C_k(\lambda\varepsilon)^{-1/[2(k+3)]}.
		\]
		Homogeneity proves \eqref{eq:all-operators}.
	\end{proof}
	
	\subsection{Proof of Theorem~\ref{thm:upper}}
	In the fixed coordinates, including the zeroth derivative,
	\begin{equation}\label{eq:covariant-jet-comparison}
		\sum_{j=0}^m|\nabla^jK_g|_g
		\approx\max_{|\alpha|\le m}|\partial^\alpha K_g|.
	\end{equation}
	Each covariant derivative is its coordinate derivative plus smooth
	linear combinations of lower derivatives, so the comparison follows
	by induction.
	\begin{proof}
		Part~\textup{(i)} is Lemma~\ref{lem:small-weight} with $b=w$.
		For \textup{(iii)}, \eqref{eq:covariant-jet-comparison} and continuity give
		$\max_{|\alpha|\le m}|\partial^\alpha K_g|\ge c>0$ on a fixed
		neighborhood of $\supp w$. Apply Lemma~\ref{lem:finite-order} with
		$k=m$ and $\varepsilon=1$, then integrate the parameters using
		\eqref{eq:iterated-input-bound} and \eqref{eq:sogge-reduction}. Interior elliptic
		estimates cover bounded frequencies.
		In particular, for fixed $b\in C_c^\infty(U)$ with $K_g\ne0$ on
		$\supp b$, the case $m=0$ gives
		\begin{equation}\label{eq:nonzero-bound}
			\left|\int_Ub u_1u_2u_3\dd V_g\right|\le C_b\lambda^{-1/6}.
		\end{equation}
		
		For \textup{(ii)}, choose $\chi_1\in C_c^\infty(\R)$ with $0\le\chi_1\le1$,
		$\chi_1=1$ on $[-1,1]$, and $\supp\chi_1\subset[-2,2]$.
		For each fixed $t>0$, split $w=w[1-\chi_1(K_g/t)]+w\chi_1(K_g/t)$.
		Equations \eqref{eq:small-weight} and \eqref{eq:nonzero-bound} imply
		\[
		\left|\int_Uw u_1u_2u_3\dd V_g\right|\le
		C_t\lambda^{-1/6}+C\|w\chi_1(K_g/t)\|_{L^4(U)},
		\]
		where $C$ is independent of $t$. Let $\lambda\to\infty$
		and then $t\downarrow0$. Uniformly over these solutions,
		\[
		\limsup_{\lambda\to\infty} \left|\int_Uw u_1u_2u_3\dd V_g\right|
		\le C\left(\int_{\{K_g=0\}}|w|^4\dd V_g\right)^{1/4}=0.
		\]
		Take $u_1=u_2=u_3=e_\lambda$ and sum the initial finite partition.
	\end{proof}
	
	For use in the proof of Theorem~\ref{thm:global}, we prove the following
	quantitative form of the case $m=0$. For $w\in C_c^\infty(U)$ with
	$K_g\ne0$ on $\supp w$,
	\begin{equation}\label{eq:weight-c2}
		\left|\int_Uw u_1u_2u_3\dd V_g\right|
		\le C_{g,\supp w,U}\lambda^{-1/6}
		\left(\sum_{j=0}^2\sup_U|\nabla^jw|_g\right)
		\prod_{j=1}^3\|u_j\|_{L^2(U)}.
	\end{equation}
	The constant depends on $w$ only through its support and is uniform when
	the supports stay in a fixed compact subset of $\{K_g\ne0\}\cap U$.
	By a fixed partition and homogeneity, work in one coordinate chart with
	$\|u_j\|_{L^2(U)}=1$. Choose a fixed $\chi\in C_c^\infty(U)$ equal to one
	near $\supp w$ and supported where $K_g\ne0$.
	Fix the rescaling in Lemma~\ref{lem:nonzero-operator}, then choose $c>0$
	sufficiently small compared with the resulting curvature parameter.
	The lower bounds in \eqref{eq:spatial-hessian}, \eqref{eq:geometric-mixed}, and
	\eqref{eq:curvature-nondegeneracy} persist after adding
	$(\xi/\lambda)\cdot x$ to the phase in $x$ when $|\xi|\le c\lambda$.
	Absorbing $e^{i\xi\cdot x}$ into the phase and applying that proof gives
	the first bound below. For the second, \eqref{eq:sogge} and
	Cauchy--Schwarz give $\|u_1\|_{L^\infty(\supp\chi)}\le C\lambda^{1/2}$
	and $\|u_2u_3\|_{L^1(U)}\le1$:
	\[
	\left|\int\chi(x)e^{i\xi\cdot x}\prod_{j=1}^3u_j(x)\dd V_g(x)\right|
	\le C\begin{cases}
		\lambda^{-1/6},&|\xi|\le c\lambda,\\
		\lambda^{1/2},&|\xi|>c\lambda.
	\end{cases}
	\]
	Extend $w$ by zero in these coordinates. Fourier inversion and
	Cauchy--Schwarz yield
	\[
	\begin{aligned}
		\left|\int w\prod_{j=1}^3u_j\dd V_g\right|
		&\le C\lambda^{-1/6}\|\widehat w\|_1
		+C\lambda^{1/2}\int_{|\xi|>c\lambda}|\widehat w(\xi)|\dd\xi\\
		&\le C(\lambda^{-1/6}+\lambda^{-1/2})\|w\|_{H^2(\R^2)}
		\le C\lambda^{-1/6}\sum_{j=0}^2\sup_U|\nabla^jw|_g.
	\end{aligned}
	\]
	
	\section{Global estimates on hyperbolic surfaces}\label{sec:global}
	
	To prove Theorem~\ref{thm:global}, we use the thick--thin decomposition
	to combine local estimates with Fourier--Bessel analysis in the cusps.
	
	Throughout this section, set
	\[
	-\Delta_g e_\lambda=\lambda^2e_\lambda,\qquad
	\mu=(\lambda^2-\tfrac14)^{1/2}\approx\lambda.
	\]
	All $L^p$ norms on the surface use $dV_g$. We use the upper half-plane
	$\HH=\{z=x+iy:y>0\}$ with metric $g=y^{-2}(dx^2+dy^2)$.
	
	\subsection{Local estimates and the thick--thin decomposition}
	Fix sufficiently small coordinate balls $U_0\Subset U\Subset\HH$.
	For $-\Delta_g u=\lambda^2u$ on $U$, Sogge's estimate
	\cite[Theorem~2.2(ii)]{Sogge1988}, by duality and \eqref{eq:sogge}, gives
	\begin{equation}\label{eq:global-absolute-local}
		\|u\|_{L^3(U_0)}^3\le C\lambda^{1/4}\|u\|_{L^2(U)}^3.
	\end{equation}
	For $b\in C_c^\infty(U_0)$, the bound for
	$\int b u^3\dd V_g$ is \eqref{eq:weight-c2} with $u_1=u_2=u_3=u$. Both local constants are
	uniform for weights supported in $U_0$ and unchanged by hyperbolic
	isometries.
	
	We may assume that $M$ is connected. Let $\operatorname{inj}_M$ denote
	its injectivity radius. For $r>0$, set
	$M_{<r}=\{p:\operatorname{inj}_M(p)<r\}$ and $M_{\ge r}=M\setminus M_{<r}$.
	By the thick--thin decomposition
	\cite[Sections~D.2--D.3]{BenedettiPetronio} and
	$\operatorname{sys}(M)\ge s_0$, fix $\delta=\delta(s_0)>0$ sufficiently small
	that every component of $M_{<2\delta}$ is a cusp neighborhood. Write
	$M_{<\delta}=\bigsqcup_\ell Y_\ell$.
	
	For each $Y_\ell$, conjugate the cusp stabilizer to
	$\langle z\mapsto z+1\rangle$. Then
	\[
	\cosh d_g(z,z+1)=1+\frac1{2y^2},\qquad
	\operatorname{inj}_M(x,y)=\tfrac12d_g(z,z+1)=\operatorname{arsinh}\frac1{2y}.
	\]
	We therefore identify each component with
	\begin{equation}\label{eq:global-cusps}
		Y_\ell=(\R/\mathbb Z)\times(y_0,\infty),\qquad
		y_0=(2\sinh\delta)^{-1}\ge1,\qquad
		dV_g=y^{-2}\dd x\dd y.
	\end{equation}
	The family $\{Y_\ell\}$ is countable and locally finite by second countability
	and $\overline{M_{<\delta}}\subset M_{<2\delta}$, each component
	of the latter containing exactly one $Y_\ell$.
	
	Throughout this section, fix a nondecreasing $\chi\in C^\infty((0,\infty))$ with
	$0\le\chi\le1$, $\chi=0$ on $(0,1]$, and $\chi=1$ on $[2,\infty)$.
	For $H>y_0$, extend each $\chi(y/H)$ by zero outside $Y_\ell$ and set
	\begin{equation}\label{eq:global-cutoff}
		w_H=1-\sum_\ell\mathbf1_{Y_\ell}\chi(y/H).
	\end{equation}
	The summands are smooth and locally finite. The isometry $z\mapsto z/H$
	gives $\sup_M|\nabla^j w_H|_g\le C_j$ for $0\le j\le2$, uniformly in $H$.
	
	\subsection{Fourier expansions in the cusps}
	Set $W_\lambda(t)=e^{\pi\mu/2}K_{i\mu}(t)$, where $K_\nu$ is the modified
	Bessel function of the second kind. The function $W_\lambda$ is real for $t>0$,
	and all bounds below are uniform for $\mu\ge1$.
	Here $\Gamma$ denotes the gamma function.
	\begin{lemma}\label{lem:bessel}
		For $0<t_0<t_1\le\mu/4$,
		\begin{equation}\label{eq:global-mass}
			\int_{t_0}^{t_1}W_\lambda(t)^2\frac{\dd t}{t}
			\ge c\mu^{-1}\log(t_1/t_0)-C\mu^{-2}.
		\end{equation}
		For $t\ge t_0\ge2\mu$,
		\begin{equation}\label{eq:global-decay}
			0<W_\lambda(t)\le e^{-(t-t_0)/2}W_\lambda(t_0).
		\end{equation}
	\end{lemma}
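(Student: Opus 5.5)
We will work with the Bessel equation in logarithmic form. Put $t=e^{\sigma}$. Since $K_{i\mu}$ solves $t^2K''+tK'-(t^2-\mu^2)K=0$, the function $W=W_\lambda$ satisfies
\[
W_{\sigma\sigma}+\omega(\sigma)^2W=0,\qquad \omega(\sigma)^2=\mu^2-e^{2\sigma},
\]
or equivalently, in the $t$ variable, $W''+t^{-1}W'=(1-\mu^2/t^2)W$. The oscillatory bound \eqref{eq:global-mass} will follow from an energy and virial argument in $\sigma$. The decay bound \eqref{eq:global-decay} will follow from a Riccati comparison in $t$.

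\emph{Step 1: energy.} Let $E(\sigma)=W_\sigma^2+\omega^2W^2$. Then $E'=-2e^{2\sigma}W^2\le0$, so $E$ is nonincreasing. To normalize $E$ at $\sigma\to-\infty$, we use the small-argument behaviour $K_{i\mu}(t)=\operatorname{Re}\bigl[\Gamma(i\mu)(t/2)^{-i\mu}\bigr]+O(t^2)$ together with $|\Gamma(i\mu)|^2=\pi/(\mu\sinh\pi\mu)$. These give $W\approx(2\pi/\mu)^{1/2}\cos(\mu\sigma+\text{const})$ and hence $E(-\infty)=2\pi\mu(1+O(e^{-2\pi\mu}))$.

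On the region $\sigma\le\log(\mu/4)$ we have $\omega^2\ge\tfrac{15}{16}\mu^2$, so $W^2\le E/\omega^2\le C E/\mu^2$. Consequently
\[
\frac{d}{d\sigma}\log E\ge-\frac{2e^{2\sigma}}{\omega^2}\ge-\frac{C e^{2\sigma}}{\mu^2}.
\]
Integrating this up to $\log(\mu/4)$ loses only a bounded factor, which gives $E\ge c\mu$ for all $\sigma\le\log(\mu/4)$.

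\emph{Step 2: virial identity.} We have $(WW_\sigma)_\sigma=W_\sigma^2-\omega^2W^2=E-2\omega^2W^2$. Integrating over $[\sigma_0,\sigma_1]=[\log t_0,\log t_1]$ yields
\[
\int_{\sigma_0}^{\sigma_1}\omega^2W^2\dd\sigma=\tfrac12\int_{\sigma_0}^{\sigma_1}E\dd\sigma-\tfrac12\bigl[WW_\sigma\bigr]_{\sigma_0}^{\sigma_1}.
\]
The boundary term is controlled by $|WW_\sigma|\le E/(2\omega)\le C$, since $E\le E(-\infty)\le C\mu$ and $\omega\approx\mu$. Using $\omega^2\le\mu^2$ on the left and $E\ge c\mu$ on the right gives
\[
\int_{t_0}^{t_1}W^2\frac{dt}{t}\ge\frac{1}{\mu^2}\Bigl(c\mu\log\frac{t_1}{t_0}-C\Bigr),
\]
which is exactly \eqref{eq:global-mass}. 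I expect the main obstacle to be the normalization of $E(-\infty)$. If the small-$t$ expansion is awkward to justify uniformly in $\mu$, I would instead use the integral representation $K_{i\mu}(t)=\int_0^\infty e^{-t\cosh u}\cos(\mu u)\dd u$. Alternatively, I would quote the uniform Debye asymptotics (DLMF 10.45), which give $W^2+\mu^{-2}W_\sigma^2=2\pi\mu^{-1}(1+O(\mu^{-1}))$ at a single point $t\le\mu/4$; by monotonicity of $E$, that single point suffices.

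\emph{Step 3: decay for $t\ge2\mu$.} Here $q(t)=1-\mu^2/t^2\ge3/4$. As $t\to\infty$ we have $W(t)\sim e^{\pi\mu/2}\sqrt{\pi/(2t)}\,e^{-t}>0$ and $W'/W\to-1$, both of which follow from the integral representation. Let $R=-W'/W$ on the maximal interval $(t_*,\infty)$ on which $W>0$. Then $R$ satisfies the Riccati equation $R'=R^2-R/t-q$ and $R\to1$ at infinity.

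We claim $R>1/2$ on $(\max(t_*,2\mu),\infty)$. Suppose not, and let $\tau$ be the largest point in this interval with $R(\tau)=1/2$. Then $R>1/2$ on $(\tau,\infty)$, which forces $R'(\tau)\ge0$. But the Riccati equation gives $R'(\tau)=\tfrac14-\tfrac1{2\tau}-q<0$, a contradiction.

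This also rules out a zero $t_*\ge2\mu$: near such a zero, $W'>0$ just to the right of $t_*$, so $R\to-\infty$ as $t\downarrow t_*$. Then $R$ must take the value $1/2$ somewhere in $(t_*,\infty)$, which the claim forbids. Hence $W>0$ on $[2\mu,\infty)$, and $R\ge1/2$ there. Integrating $(\log W)'=-R\le-\tfrac12$ from $t_0$ to $t$ gives \eqref{eq:global-decay}.
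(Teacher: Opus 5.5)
Your proposal is correct and follows the paper's proof. The energy $E=(D_tW)^2+(\mu^2-t^2)W^2$, its limit $2\pi\mu/(1-e^{-2\pi\mu})$ as $t\downarrow0$, the lower bound $E\ge c\mu$, and the virial identity with the boundary term $|W\,D_tW|\le C$ all coincide with the paper; the paper states the lower bound as monotonicity of $E/(\mu^2-t^2)$, which is exactly what integrating your bound $\frac{d}{d\sigma}\log E\ge\frac{d}{d\sigma}\log\omega^2$ gives. For the decay, the paper first gets $W>0$ and $W'<0$ on $[2\mu,\infty)$ from the monotonicity of $tW'$ toward its limit $0$, and then integrates $W''\ge\frac34W$ against $e^{-t/2}$ to obtain $W'+W/2\le0$, whereas your Riccati barrier for $R=-W'/W$ at level $1/2$ gives positivity and $R\ge1/2$ in a single argument; both are valid.
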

	\begin{proof}
		Let $D_t=t\partial_t$. The expansions at $t=0$ and the gamma-function identity
		\cite[Eqs.~10.25.2, 10.27.4, and~5.4.3]{NIST} give
		\[
		\lim_{t\downarrow0}\bigl((D_t W_\lambda)^2+\mu^2W_\lambda^2\bigr)
		=\mu^2e^{\pi\mu}|\Gamma(i\mu)|^2
		=\frac{2\pi\mu}{1-e^{-2\pi\mu}}\approx\mu.
		\]
		The modified Bessel equation \cite[Eq.~10.45.1]{NIST},
		$D_t^2W_\lambda+(\mu^2-t^2)W_\lambda=0$, implies
		\[
		\begin{aligned}
			D_t \bigl((D_t W_\lambda)^2+(\mu^2-t^2)W_\lambda^2\bigr)&=-2t^2W_\lambda^2,\\
			D_t \left(W_\lambda^2+\frac{(D_t W_\lambda)^2}{\mu^2-t^2}\right)
			&=\frac{2t^2(D_t W_\lambda)^2}{(\mu^2-t^2)^2}.
		\end{aligned}
		\]
		Thus, on $0<t\le\mu/4$,
		\[
		c\mu\le (D_t W_\lambda)^2+(\mu^2-t^2)W_\lambda^2\le C\mu,
		\qquad |W_\lambda\,D_t W_\lambda|\le C,
		\]
		and therefore
		\begin{align*}
			2\mu^2\int_{t_0}^{t_1}W_\lambda^2\frac{\dd t}{t}
			&\ge2\int_{t_0}^{t_1}(\mu^2-t^2)W_\lambda^2\frac{\dd t}{t}\\
			&=\int_{t_0}^{t_1}\bigl((D_t W_\lambda)^2+(\mu^2-t^2)W_\lambda^2\bigr)
			\frac{\dd t}{t}-[W_\lambda\,D_t W_\lambda]_{t_0}^{t_1}\\
			&\ge c\mu\log(t_1/t_0)-C,
		\end{align*}
		proving \eqref{eq:global-mass}.
		
		For fixed $\mu$, the asymptotics as $t\to\infty$
		\cite[Eqs.~10.40.2 and~10.40.4]{NIST} give
		$W_\lambda>0$ near infinity and $W_\lambda,tW_\lambda'\to0$. If $z\ge2\mu$ were its last
		zero, then
		\[
		(tW_\lambda')'=(t-\mu^2/t)W_\lambda>0\quad(t>z)
		\quad\Longrightarrow\quad W_\lambda'<0,
		\]
		contradicting $W_\lambda(z)=0<W_\lambda(t)$. Hence, for $t\ge2\mu$,
		\[
		W_\lambda>0,\qquad W_\lambda'<0,\qquad
		W_\lambda''=(1-\mu^2/t^2)W_\lambda-W_\lambda'/t\ge\tfrac34W_\lambda.
		\]
		Consequently
		\[
		\left(e^{-t/2}(W_\lambda'+W_\lambda/2)\right)'\ge0,\qquad
		e^{-t/2}(W_\lambda'+W_\lambda/2)\longrightarrow0,
		\]
		so $W_\lambda'+W_\lambda/2\le0$, proving \eqref{eq:global-decay}.
	\end{proof}
	
	\begin{lemma}\label{lem:cusp-fourier}
		For any $y_0>0$, let $u\in L^2(Y)$ solve $-\Delta_g u=\lambda^2u$ on
		$Y=(\R/\mathbb Z)\times(y_0,\infty)$ with the hyperbolic metric above.
		There are constants $c_n\in\mathbb C$, $n\ne0$, such that
		\begin{equation}\label{eq:global-fourier}
			\begin{split}
				u(x,y)&=\sqrt y\sum_{n\ne0}c_nW_\lambda(2\pi|n|y)e^{2\pi inx},\\
				\|u\|_{L^2(Y)}^2
				&=\sum_{n\ne0}|c_n|^2
				\int_{y_0}^\infty W_\lambda(2\pi|n|y)^2\frac{\dd y}{y}.
			\end{split}
		\end{equation}
		For an integer $N\ge1$, let $u_{\ge N}$ be the sum in
		\eqref{eq:global-fourier} restricted to $|n|\ge N$.
		If $H\ge4\max\{y_0,1\}$ and $NH\ge4\mu$, then
		\begin{equation}\label{eq:global-high-bound}
			|u_{\ge N}(x,y)|\le Ce^{-cNy}\|u\|_{L^2(Y)},\qquad y\ge H.
		\end{equation}
	\end{lemma}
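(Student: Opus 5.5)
The plan is to separate variables in the cusp and then control the high Fourier modes using the decay of $W_\lambda$ from Lemma~\ref{lem:bessel}.

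\emph{Step 1: the expansion.} Since $u$ is smooth and $1$-periodic in $x$, we write
\[
u(x,y)=\sum_n a_n(y)e^{2\pi inx}.
\]
Each coefficient solves the ODE $y^2(a_n''-4\pi^2n^2a_n)=-\lambda^2a_n$ on $(y_0,\infty)$. For $n=0$ the solutions are $y^{1/2\pm i\mu}$. Neither lies in $L^2(y^{-2}\,dy)$ near infinity, because $|y^{1/2\pm i\mu}|^2y^{-2}=y^{-1}$ is not integrable. By Parseval, $a_0\in L^2(y^{-2}\,dy)$, so $a_0=0$.

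For $n\ne0$, substitute $a_n=\sqrt y\,b(2\pi|n|y)$. Then $b$ solves the modified Bessel equation $D_t^2b+(\mu^2-t^2)b=0$. Its solutions are spanned by $K_{i\mu}$, which decays, and $I_{i\mu}$, which grows exponentially. The $L^2$ condition forces $a_n=c_n\sqrt y\,W_\lambda(2\pi|n|y)$. The $L^2$ identity is then Parseval in $x$ combined with $dV_g=y^{-2}\,dx\,dy$, which gives
\[
\int y\,W_\lambda(2\pi|n|y)^2\,y^{-2}\,dy=\int W_\lambda(2\pi|n|y)^2\,\frac{dy}{y}.
\]
Convergence of the series holds pointwise for smooth $u$, and absolutely once Step 2 is in place.

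\emph{Step 2: bound on each coefficient.} For $|n|\ge N$ and $y\ge H$, set $t=2\pi|n|y$. We have $t\ge2\pi NH\ge8\pi\mu\ge2\mu$. Set $t_0=2\pi|n|H/2$, so that $t_0\ge\pi NH\ge4\pi\mu\ge2\mu$, and note $H/2\ge2y_0$. The decay estimate \eqref{eq:global-decay} gives, for $y'\in[H/2,H]$ and $y\ge H$,
\[
W_\lambda(2\pi|n|y)\le e^{-\pi|n|(y-y')}\,W_\lambda(2\pi|n|y').
\]
Square this and integrate in $y'$ over $[H/2,H]$ against $dy'/y'$; the measure of this interval is $\log2$. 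Using $y-y'\ge y-H\ge0$ and the $L^2$ identity, we obtain
\[
|c_n|^2W_\lambda(2\pi|n|y)^2\le C\,e^{-2\pi|n|(y-H)}\,\|u\|_{L^2(Y)}^2 .
\]
This alone is not summable enough when $y$ is near $H$. To fix this, integrate over $y'\in[H/4,H/2]$ instead, which uses $H/4\ge y_0$ and keeps $t_0=2\pi|n|H/4\ge\pi NH/2\ge2\mu$. Now $y-y'\ge y/2$ for all $y\ge H$, so
\[
|c_n|\,W_\lambda(2\pi|n|y)\le C\,e^{-\pi|n|y/2}\,\|u\|_{L^2(Y)} .
\]

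\emph{Step 3: summation.} Multiply by $\sqrt y$ and sum over $|n|\ge N$:
\[
|u_{\ge N}(x,y)|\le C\sqrt y\sum_{|n|\ge N}e^{-\pi|n|y/2}\,\|u\|_{L^2(Y)}\le C\sqrt y\,e^{-\pi Ny/2}\bigl(1-e^{-\pi y/2}\bigr)^{-1}\|u\|_{L^2(Y)}.
\]
Since $y\ge H\ge4$, the geometric factor $(1-e^{-\pi y/2})^{-1}$ is bounded. The factor $\sqrt y$ is absorbed by $e^{-\pi Ny/4}$, because $Ny\ge4$. This yields \eqref{eq:global-high-bound} with $c=\pi/4$.

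The main obstacle is this bookkeeping: choosing the reference interval for $y'$ so that three conditions hold at once. It must lie inside $(y_0,\infty)$, it must satisfy $t_0\ge2\mu$, and it must produce decay proportional to $y$ rather than to $y-H$. The hypotheses $H\ge4\max\{y_0,1\}$ and $NH\ge4\mu$ are calibrated precisely so that the interval $[H/4,H/2]$ meets all three.
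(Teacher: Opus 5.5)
Your proposal is correct and follows essentially the paper's route: the same separation of variables with the $n=0$ and $I_{i\mu}$ modes excluded by square integrability, and the same reference window $[H/4,H/2]$ fed into \eqref{eq:global-decay}. The only differences are cosmetic: you average the decay bound over the window and sum a geometric series coefficient by coefficient, whereas the paper uses monotonicity to reduce to $W_\lambda(\pi|n|H)$ and then applies Cauchy--Schwarz in $n$. One small fix for $n=0$: you should state that no nonzero combination $Ay^{1/2+i\mu}+By^{1/2-i\mu}$ lies in $L^2(y^{-2}\,dy)$, not merely that neither function does; this holds because $|Ay^{i\mu}+By^{-i\mu}|^2$ has mean $|A|^2+|B|^2$ in the variable $\log y$.
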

	\begin{proof}
		Taking Fourier coefficients in $x$ as in
		\cite[Proposition~1.5]{Iwaniec} gives $y^{1/2\pm i\mu}$ for $n=0$ and
		$\sqrt y\,I_{i\mu}(2\pi|n|y)$, $\sqrt y\,K_{i\mu}(2\pi|n|y)$ for $n\ne0$,
		where $I_\nu$ is the modified Bessel function of the first kind.
		Since $\mu>0$, no nonzero linear combination of the two solutions for
		$n=0$ belongs to $L^2((y_0,\infty),y^{-2}\dd y)$. For $n\ne0$, the
		exponentially growing $I_{i\mu}$ term is excluded by square integrability
		\cite[Eqs.~10.40.1--10.40.2]{NIST}. The remaining expansion and Parseval's identity give \eqref{eq:global-fourier}.
		For $|n|\ge N$ and $H/4\le y\le H/2$, we have
		$2\pi|n|y\ge(\pi/2)NH>2\mu$. Monotonicity gives
		\[
		\sum_{|n|\ge N}|c_n|^2W_\lambda(\pi|n|H)^2
		\le(\log2)^{-1}\|u\|_{L^2(Y)}^2.
		\]
		For $y\ge H$, \eqref{eq:global-decay} gives
		$W_\lambda(2\pi|n|y)\le e^{-\pi|n|y/2}W_\lambda(\pi|n|H)$.
		Thus Cauchy--Schwarz yields
		\[
		|u_{\ge N}(x,y)|
		\le C\sqrt y\,\|u\|_{L^2(Y)}
		\left(\sum_{|n|\ge N}e^{-\pi|n|y}\right)^{1/2}
		\le Ce^{-\pi Ny/4}\|u\|_{L^2(Y)}.\qedhere
		\]
	\end{proof}
	
	\subsection{A weighted integral of three Bessel functions}
	Integration in the horocycle variable selects Fourier modes with
	$n_1+n_2+n_3=0$, leading to the following Bessel integral.
	
	\begin{lemma}\label{lem:bessel-triple}
		For $a,b>0$, $\mu\ge1$, and $\omega\in C^2((0,\infty))$,
		\begin{equation}\label{eq:global-triple-bound}
			\begin{aligned}
				&\left|\int_0^\infty\omega(y)y^{-1/2}
				W_\lambda(ay)W_\lambda(by)W_\lambda((a+b)y)\dd y\right|\\
				&\qquad\le C\mu^{-3/2}(a+b)^{-1/2}
				\sum_{j=0}^2\|(y\partial_y)^j\omega\|_\infty,
			\end{aligned}
		\end{equation}
		provided the sum on the right is finite. The constant is independent of
		$a/b$.
	\end{lemma}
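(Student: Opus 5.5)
The plan is to scale out $a+b$, pass to the logarithmic variable in which the Bessel equation has constant leading frequency $\mu$, and then use nonresonance of the three oscillating phases, integrating by parts where it pays.

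\textbf{Reduction.} Substituting $t=(a+b)y$ and writing $\alpha=a/(a+b)$, $\beta=b/(a+b)$ with $\alpha+\beta=1$, the integral becomes
\[
(a+b)^{-1/2}\int_0^\infty\widetilde\omega(t)\,t^{-1/2}W_\lambda(\alpha t)W_\lambda(\beta t)W_\lambda(t)\dd t,\qquad \widetilde\omega(t)=\omega(t/(a+b)).
\]
The norms $\|(y\partial_y)^j\omega\|_\infty$ are dilation invariant, so it suffices to prove the bound $C\mu^{-3/2}$ for this integral uniformly in $\alpha\in(0,1)$. By symmetry I take $\alpha\le\beta$, so $\beta\ge1/2$.

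I then pass to $s=\log t$ and $D=\partial_s=t\partial_t$. The integral becomes $\int\widetilde\omega\, e^{s/2}\,F_\alpha F_\beta F_1\dd s$, where $F_\gamma(s)=W_\lambda(\gamma e^s)$ solves
\[
D^2F_\gamma=-(\mu^2-\gamma^2e^{2s})F_\gamma.
\]
The proof of Lemma~\ref{lem:bessel} gives the conserved-energy bounds: $(DF_\gamma)^2+(\mu^2-\gamma^2t^2)F_\gamma^2\approx\mu$ for $\gamma t\le\mu/4$. Hence in that range $|F_\gamma|\le C\mu^{-1/2}$ and $|DF_\gamma|\le C\mu^{1/2}$. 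By \eqref{eq:global-decay} there is exponential decay for $t\ge2\mu$. For $t\ge2\mu$ the factor $W_\lambda(t)$ decays exponentially, so that region contributes $O(\mu^{-N})$ after using crude bounds on the other two factors.

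\textbf{Main region.} The trivial estimate on $t\le\mu$ is
\[
\int e^{s/2}\mu^{-3/2}\dd s\approx\mu^{-1},
\]
which is short by the factor $\mu^{-1/2}$. On $t\le\mu/4$ I would recover this factor by one integration by parts in $s$, using a Liouville--Green representation
\[
F_\gamma=\mu^{1/2}(\mu^2-\gamma^2t^2)^{-1/4}\,\mathrm{Re}\bigl(A_\gamma e^{i\varphi_\gamma(s)}\bigr)\bigl(1+O(\mu^{-1})\bigr),\qquad \varphi_\gamma'=(\mu^2-\gamma^2e^{2s})^{1/2}.
\]
The product then splits into eight terms with phases $\pm\varphi_\alpha\pm\varphi_\beta\pm\varphi_1$. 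Each phase derivative is a signed sum of three numbers in $[\sqrt{\mu^2-t^2},\mu]$. In the region $t\le\mu/4$ such a sum is $\ge c\mu$ in absolute value: at least one coefficient is unpaired by sign, and the numbers are $\mu(1+O(t^2/\mu^2))$. Integrating by parts once gains $\mu^{-1}$ against $\int e^{s/2}\cdot(\mu^{-3/2}+\text{derivative terms})\dd s$. The derivative terms use $\|D\widetilde\omega\|_\infty$, $\|D^2\widetilde\omega\|_\infty$ and $|D\log$ amplitude$|\le C$. This yields $O(\mu^{-3/2}\mu^{1/2}\mu^{-1}\cdot\mu^{1/2})=O(\mu^{-3/2})$, with boundary terms vanishing because of $e^{s/2}$ at $s\to-\infty$. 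Alternatively, to avoid WKB error bookkeeping, I would integrate by parts directly with the ODE. Writing $F_1=-D^2F_1/(\mu^2-t^2)$, one moves $D$'s onto the other factors and the weight. The energy identities then control $DF_\gamma$, and the second application of the ODE to $F_\alpha F_\beta$ produces the nonresonant combination.

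\textbf{Turning-point region and uniformity (main obstacle).} The difficulty is $\mu/4\le t\le2\mu$, where $W_\lambda(t)$ passes through its turning point $t=\mu$. There the amplitude grows to $\approx\mu^{1/6}$ on a window of width $\mu^{1/3}$, and the phase gap $\varphi_1'$ degenerates. A similar issue arises for $\beta t$ near $\mu$ when $\beta\approx1$, which also forces $\alpha\to0$. I would handle this by splitting at $|t-\mu|\approx\mu^{1/3}$ in the Airy scale.
\begin{itemize}
\item \emph{Outside the Airy window.} The phase gap is still $\gtrsim\mu$ for the sign patterns in which $\varphi_\alpha$ or $\varphi_\beta$ carries the unpaired coefficient. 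Since $\alpha\le1/2$, $\varphi_\alpha'\ge c\mu$ throughout, and one integration by parts works with amplitude derivatives $\lesssim(\mu^2-t^2)^{-1}\mu t$. Their integral is $O(\log\mu)$ at worst, and I expect it to be absorbed after checking the powers carefully.
\item \emph{Inside the Airy window.} I use $|W_\lambda(t)|\lesssim\mu^{1/6}$ together with the fact that $W_\lambda(\alpha t)$ oscillates with $s$-frequency $\gtrsim\mu$. This gives cancellation over a window of $s$-length $\mu^{-2/3}$, so integrating by parts once in $s$ against the slowly varying Airy factor gains enough.
\end{itemize}
The delicate case is $\beta\to1$, $\alpha\to0$, where two factors have nearly coincident turning points. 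There I would treat $W_\lambda(\beta t)W_\lambda(t)$ together via the Wronskian-type identity
\[
D\bigl(F_\beta DF_1-F_1DF_\beta\bigr)=(\beta^2-1)t^2F_\beta F_1,
\]
and exploit only the fast oscillation of $F_\alpha$, whose frequency is $\approx\mu$ uniformly in small $\alpha$. Making these estimates uniform in $\alpha$ is where I expect most of the work.
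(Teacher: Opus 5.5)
Your treatment of $t\le\mu/4$ is fine: all eight phases have $s$-derivative $\ge c\mu$ there, and one integration by parts even gives $O(\mu^{-2})$. The argument for $\mu/4\le t\le 2\mu$ fails, however, because it misses the resonance that produces the main term.

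\textbf{The missed stationary point.} You claim the sign pattern with $\varphi_\alpha$ unpaired is nonstationary; this is false. With $\alpha\le\beta$,
\[
\Theta'=\varphi_\alpha'-\varphi_\beta'-\varphi_1'=\sqrt{\mu^2-\alpha^2t^2}-\sqrt{\mu^2-\beta^2t^2}-\sqrt{\mu^2-t^2}
\]
tends to $-\mu$ as $t\to0$ and equals $\mu(\sqrt{1-\alpha^2}-\sqrt{1-\beta^2})\ge0$ at $t=\mu$. It vanishes at $t=r_\alpha\mu$ with $r_\alpha^2=3/(4(1-\alpha+\alpha^2))\in[3/4,1]$. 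At that point $\Theta''\ge t^2(\mu^2-t^2)^{-1/2}\gtrsim\mu$, and, for $\alpha$ away from $1/2$, the amplitude $e^{s/2}F_\alpha F_\beta F_1$ is $\approx\mu^{-1}$. So this point contributes a term of size $\mu^{-3/2}$: it saturates the bound, no integration by parts can remove it, and it requires a stationary-phase (or second-derivative van der Corput) estimate.

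\textbf{The actual delicate case.} As $\alpha\to1/2$ one has $r_\alpha\to1$, so the stationary point collides with the turning point $t=\mu$ of $W_\lambda(t)$. There the Liouville--Green form breaks down and $\Theta''$ blows up, so you would need a bound uniform through the coalescence of a stationary point with an Airy turning point. This is the delicate case, not $\alpha\to0$. In that limit $r_\alpha\to\sqrt3/2$, away from both turning points, and near $t=\mu$ the phase gap is $\approx\varphi_\alpha'\approx\mu$.

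Your Airy-window step fails for the same reason. In the pattern $\varphi_\alpha-\varphi_\beta$, the oscillation of $W_\lambda(\alpha t)$ is cancelled by that of $W_\lambda(\beta t)$. The derivative of this pattern at $t=\mu$ is $\mu(\sqrt{1-\alpha^2}-\sqrt{1-\beta^2})$, which vanishes at $\alpha=1/2$. Hence $W_\lambda(\alpha t)W_\lambda(\beta t)$ has a slowly varying part, and integrating by parts against the Airy factor gains nothing.

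\textbf{How the paper avoids this.} The paper never separates the three factors. After the substitution $y=\mu r/(a+b)$, it writes each $W_\lambda(\mu\alpha_jr)$, with $(\alpha_1,\alpha_2,\alpha_3)=(\alpha,1-\alpha,1)$, as
\[
\tfrac12\int e^{i\mu(s_j-\alpha_jr\sinh s_j)}\chi_1(\alpha_jr\cosh s_j)\dd s_j+O(\mu^{-3}).
\]
Setting $p_j=r\alpha_j\sinh s_j$ gives a four-dimensional oscillatory integral with phase $\Phi(r,p)=\sum_j[\operatorname{arsinh}(p_j/(r\alpha_j))-p_j]$.
\begin{itemize}
\item Its only critical points are $(r_\alpha,\pm p_\alpha)$, which are exactly your resonance: the relation $|p_1|=|p_2|+|p_3|$ is $\varphi_\alpha'=\varphi_\beta'+\varphi_1'$.
\item These critical points are nondegenerate uniformly in $\alpha\in[0,1]$, with $\det\nabla^2\Phi=3/(4r_\alpha^2)\ge3/4$.
\item The turning point of the third factor is just $p_3=0$, a regular point of $\Phi$. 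At $\alpha=1/2$ the critical point $(1,\tfrac{\sqrt3}{2},-\tfrac{\sqrt3}{2},0)$ is still nondegenerate, so the coalescence costs nothing.
\end{itemize}
Stationary phase in four variables then gives $\mu^{-2}$, which is $\mu^{-3/2}$ in your $t$-normalization. The weight enters only through Fourier inversion in $r$, using $\|\widehat\beta\|_{L^1}\lesssim\|\beta\|_{H^1}$. The rest of the region is nonstationary, and near $r=0$ two integrations by parts in $r\partial_r$ suffice.

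\textbf{Two smaller points.}
\begin{itemize}
\item At the turning point $|W_\lambda|\approx\mu^{-1/3}$. Your $\mu^{1/6}$ is correct only relative to the bulk size $\mu^{-1/2}$; used as an absolute bound it loses $\mu^{1/2}$.
\item For $t\ge2\mu$, you must combine \eqref{eq:global-decay} with the exponential smallness of $W_\lambda(2\mu)$, for instance from $|W_\lambda(t)|\le e^{\pi\mu/2}\int_0^\infty e^{-t\cosh s}\dd s$. Decay relative to $W_\lambda(2\mu)$ alone gives only $O(\mu^{-1/2})$.
\end{itemize}
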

	\begin{proof}
		Normalize the sum on the right to be at most one.
		The integral formulas \cite[(10.32.7) and (10.32.9)]{NIST} give
		\begin{equation}\label{eq:bessel-oscillatory}
			W_\lambda(t)=\frac12\int_\R e^{i(\mu s-t\sinh s)}\dd s,\qquad
			|W_\lambda(t)|\le e^{\pi\mu/2}\int_0^\infty e^{-t\cosh s}\dd s.
		\end{equation}
		The first integral is understood with smooth cutoffs tending to one.
		Choose $\chi_1\in C_c^\infty((1/2,2))$ equal to one on $[3/4,5/4]$.
		On the support of $1-\chi_1(z\cosh s)$,
		$|1-z\cosh s|\ge1/4$.
		Three integrations by parts, splitting at $s=0$, give uniformly for $z>0$
		\begin{equation}\label{eq:bessel-cutoff}
			W_\lambda(\mu z)=\frac12\int_\R e^{i\mu(s-z\sinh s)}
			\chi_1(z\cosh s)\dd s+O(\mu^{-3}).
		\end{equation}
		The set $\{s:\chi_1(z\cosh s)\ne0\}$ has uniformly bounded length, so
		\begin{equation}\label{eq:bessel-global-sup}
			\sup_{t>0}|W_\lambda(t)|\le C.
		\end{equation}
		Set $\alpha=a/(a+b)$, $(\alpha_1,\alpha_2,\alpha_3)=(\alpha,1-\alpha,1)$,
		and $y=\mu r/(a+b)$. It suffices to prove
		\begin{equation}\label{eq:bessel-scaled-weight}
			\left|\int_0^\infty r^{-1/2}\omega\left(\frac{\mu r}{a+b}\right)
			\prod_{j=1}^3W_\lambda(\mu\alpha_jr)\dd r\right|\le C\mu^{-2}.
		\end{equation}
		For $r\ge2$, \eqref{eq:bessel-oscillatory} and $\cosh s\ge1+s^2/2$ give
		\[
		|W_\lambda(\mu r)|\le C(\mu r)^{-1/2}e^{-\mu(r-\pi/2)}
		\le C(\mu r)^{-1/2}e^{-c\mu r}.
		\]
		Together with \eqref{eq:bessel-global-sup}, this makes the part $r\ge2$
		exponentially small.
		
		On $0<r<2$, substitute \eqref{eq:bessel-cutoff} and change variables by
		\[
		p_j=r\alpha_j\sinh s_j,\qquad
		q_j=(p_j^2+r^2\alpha_j^2)^{1/2},\qquad
		\dd s_j=q_j^{-1}\dd p_j.
		\]
		Up to $O(\mu^{-3})$, the resulting integral is
		\[
		\frac18\int_0^2\int_{\R^3} e^{i\mu\Phi(r,p)}r^{-1/2}
		\omega\left(\frac{\mu r}{a+b}\right)
		\prod_{j=1}^3\frac{\chi_1(q_j)}{q_j}\dd p\dd r,\qquad
		\Phi(r,p)=\sum_{j=1}^3\left[\operatorname{arsinh}
		\left(\frac{p_j}{r\alpha_j}\right)-p_j\right].
		\]
		On the amplitude support, $|p_j|\le2$ and $1/2\le q_j\le2$. Let
		$D=r\partial_r$. Since
		\[
		\begin{gathered}
			\partial_{p_j}q_j=\frac{p_j}{q_j},\qquad
			Dq_j=\frac{r^2\alpha_j^2}{q_j},\\
			\partial_{p_j}\Phi=q_j^{-1}-1,\qquad
			D\Phi=-\sum_{j=1}^3\frac{p_j}{q_j},
		\end{gathered}
		\]
		for $m\ge0$ and $\beta\in\mathbb N_0^3$ we have
		\[
		\left|\partial_p^\beta D^m
		\prod_{j=1}^3\frac{\chi_1(q_j)}{q_j}\right|\le C_{\beta,m},\qquad
		|\partial_p^\beta D^m\Phi|\le C_{\beta,m}\quad(m+|\beta|\ge1),
		\]
		uniformly for $0<\alpha<1$.
		
		Choose $0\le\chi_0\le1$ smooth, equal to one for $r\le1/8$ and zero for
		$r\ge1/4$. On its intersection with the amplitude support,
		\[
		\frac{|p_j|}{q_j}=\sqrt{1-\frac{r^2\alpha_j^2}{q_j^2}}
		\ge\frac{\sqrt3}{2},\qquad |D\Phi|\ge\sqrt3-1.
		\]
		Set
		\[
		a_0(r,p)=\chi_0(r)\omega\left(\frac{\mu r}{a+b}\right)
		\prod_{j=1}^3\frac{\chi_1(q_j)}{q_j},\qquad
		a_{j+1}=-\left(D+\tfrac12\right)\frac{a_j}{D\Phi}\quad(j=0,1).
		\]
		The assumption $\sum_{j=0}^2\|(y\partial_y)^j\omega\|_\infty\le1$ and
		\[
		D^j\!\left[\omega\left(\frac{\mu r}{a+b}\right)\right]
		=\bigl[(y\partial_y)^j\omega\bigr]\left(\frac{\mu r}{a+b}\right),
		\qquad 0\le j\le2,
		\]
		give $|a_j|\le C$ for $0\le j\le2$. The boundary terms satisfy
		\[
		\sup_p\left|r^{1/2}e^{i\mu\Phi}\frac{a_j}{D\Phi}\right|
		\le Cr^{1/2}\longrightarrow0\quad(r\downarrow0),\qquad j=0,1.
		\]
		Thus two integrations by parts, using
		$De^{i\mu\Phi}=i\mu(D\Phi)e^{i\mu\Phi}$, yield
		\[
		\left|\int_0^\infty\int_{\R^3}e^{i\mu\Phi}r^{-1/2}a_0\dd p\dd r\right|
		=\mu^{-2}\left|\int_0^\infty\int_{\R^3}
		e^{i\mu\Phi}r^{-1/2}a_2\dd p\dd r\right|
		\le C\mu^{-2}.
		\]
		
		For $r>1/8$, the identity
		$\partial_r^m=r^{-m}\prod_{j=0}^{m-1}(D-j)$ for $m\ge1$ also gives
		\[
		|\partial_{(r,p)}^\gamma\Phi|\le C_\gamma,\qquad |\gamma|\ge1.
		\]
		The critical point equations $q_j=1$ and $\sum_{j=1}^3p_j=0$ have exactly two
		solutions $(r_\alpha,\pm p_\alpha)$, where
		\[
		r_\alpha^2=\frac3{4(1-\alpha+\alpha^2)},\qquad
		p_\alpha=\frac{r_\alpha}{\sqrt3}(2-\alpha,-1-\alpha,2\alpha-1).
		\]
		At either solution,
		\begin{equation}\label{eq:bessel-joint-hessian}
			\sum_{j=1}^3p_j^2=\tfrac32,\qquad
			\det\nabla^2\Phi=-\frac1{r_\alpha^2}\sum_{1\le i<j\le3}p_ip_j
			=\frac3{4r_\alpha^2}\ge\tfrac34.
		\end{equation}
		By \eqref{eq:bessel-joint-hessian} and the uniform third-derivative bounds,
		choose a fixed $\eta\in C_c^\infty(\R^4)$, equal to one near zero and
		supported in a sufficiently small ball, so that
		\[
		\begin{gathered}
			\eta_\pm(r,p)=\eta(r-r_\alpha,p\mp p_\alpha),\qquad
			A_\pm=\eta_\pm\prod_{j=1}^3\frac{\chi_1(q_j)}{q_j},\\
			|\det\nabla^2\Phi|\ge\tfrac38\quad\text{on }\supp\eta_\pm,\qquad
			\|A_\pm\|_{C^N}\le C_N.
		\end{gathered}
		\]
		Since $\nabla^2(\Phi+\xi r/\mu)=\nabla^2\Phi$, uniform stationary phase
		\cite[Corollary~1.1.8]{Sogge} for $|\xi|\le C_0\mu$ and two integrations
		by parts in $r$ for $|\xi|>C_0\mu$, with fixed sufficiently large $C_0$, give
		\[
		\sup_{\xi\in\R}
		\left|\int e^{i\mu\Phi(r,p)+i\xi r}A_\pm(r,p)\dd r\dd p\right|
		\le C\mu^{-2}.
		\]
		Choose a fixed $\rho\in C_c^\infty((1/8,2))$ equal to one on the
		projections of $\supp A_\pm$ onto the $r$ axis, and let
		$\beta(r)=\rho(r)r^{-1/2}\omega(\mu r/(a+b))$.
		Then $\|\widehat\beta\|_{L^1(\R)}\le C\|\beta\|_{H^1(\R)}\le C$,
		so Fourier inversion in $r$ gives
		\[
		\left|\int e^{i\mu\Phi(r,p)}A_\pm(r,p)\beta(r)\dd r\dd p\right|
		\le C\mu^{-2}\|\widehat\beta\|_1\le C\mu^{-2}.
		\]
		
		The remaining amplitude is
		\[
		\widetilde A=(1-\chi_0)(1-\eta_+-\eta_-)r^{-1/2}
		\omega\left(\frac{\mu r}{a+b}\right)
		\prod_{j=1}^3\frac{\chi_1(q_j)}{q_j}.
		\]
		The formulas for $\nabla_{r,p}\Phi$ extend continuously to
		$\alpha=0,1$, where their only zeros remain $(r_\alpha,\pm p_\alpha)$.
		Compactness and the preceding derivative bounds give, uniformly in $\alpha$,
		\[
		\inf_{\supp\widetilde A}|\nabla_{r,p}\Phi|\ge c>0,\qquad
		\sum_{|\gamma|\le2}\|\partial_{(r,p)}^\gamma\widetilde A\|_{L^1}\le C.
		\]
		Twice integrating by parts with
		$(i\mu|\nabla_{r,p}\Phi|^2)^{-1}\nabla_{r,p}\Phi\cdot\nabla_{r,p}$ gives
		\[
		\left|\int e^{i\mu\Phi}\widetilde A\dd r\dd p\right|
		\le C\mu^{-2}\sum_{|\gamma|\le2}
		\|\partial_{(r,p)}^\gamma\widetilde A\|_{L^1}
		\le C\mu^{-2}.
		\]
		This proves \eqref{eq:bessel-scaled-weight}. Multiplication by
		$(\mu/(a+b))^{1/2}$ proves \eqref{eq:global-triple-bound}.
	\end{proof}
	
	\subsection{Completion of the proof}\label{sec:global-localization}
	\begin{proof}[Proof of Theorem~\ref{thm:global}]
		By homogeneity, assume
		$\|e_\lambda\|_{L^2(M)}=\|w\|_{C_b^2(M)}=1$.
		Fix $0<4r<\delta$ and an even nonnegative
		$\eta_c\in C_c^\infty((-r,r))$ with
		$2\pi\int_0^r\eta_c(t)\sinh t\dd t=1$.
		Let $\pi_M:\HH\to M$ be the universal covering. For any lift
		$\widetilde p$ of $p$, let
		\[
		m_\lambda(p)=\|e_\lambda\circ\pi_M\|_{L^2(B_g(\widetilde p,2r))}.
		\]
		For $F\ge0$, apply Tonelli's theorem and unfold first with $\eta_c$,
		then with $\mathbf1_{[0,2r)}$, to obtain
		\begin{equation}\label{eq:global-unfolding}
			\begin{gathered}
				\int_M\int_{\HH}\eta_c(d_g(\widetilde p,z))F(\pi_Mz)
				\dd V_g(z)\dd V_g(p)=\int_MF\dd V_g,\\
				\int_Mm_\lambda(p)^2\dd V_g(p)=2\pi(\cosh(2r)-1).
			\end{gathered}
		\end{equation}
		If $\operatorname{inj}_M(p)\ge2r$, $\pi_M$ is injective on
		$B_g(\widetilde p,2r)$. Otherwise the image of this ball lies in a single
		cusp, because
		$\operatorname{inj}_M(q)<4r<\delta$ there. In the latter case,
		\[
		\#\bigl(B_g(\widetilde p,2r)\cap\pi_M^{-1}(q)\bigr)
		\le1+2y(p)\sinh(2r),\qquad m_\lambda(p)\le C_r y(p)^{1/2}.
		\]
		For $H\ge4y_0$, this proves
		\[
		\sup_{d_g(p,\supp w_H)<r}m_\lambda(p)\le C_{s_0}H^{1/2},\qquad
		\sup_\ell\sup_{\substack{p\in Y_\ell\\y(p)\ge e^{-r}H}}
		y(p)^{-1/2}m_\lambda(p)\le C_r.
		\]
		The unfolding identities \eqref{eq:global-unfolding} allow us to apply
		\eqref{eq:global-absolute-local} and then \eqref{eq:weight-c2}. The weight
		$\eta_c(d_g(\widetilde p,\cdot))[(w w_H)\circ\pi_M]$ has uniformly bounded
		covariant derivatives through order two, and
		$$\int_{\{d_g(p,\supp w_H)<r\}}m_\lambda(p)^3\dd V_g(p)
		\le C_{s_0}H^{1/2}.$$ Hence
		\begin{equation}\label{eq:global-shallow}
			\begin{aligned}
				\int_Mw_H|e_\lambda|^3\dd V_g
				&\le C_{s_0}\mu^{1/4}H^{1/2},\\
				\left|\int_Mw w_He_\lambda^3\dd V_g\right|
				&\le C_{s_0}\mu^{-1/6}H^{1/2}.
			\end{aligned}
		\end{equation}
		If $M$ has no cusps, then $w_H=1$. Taking $H=4y_0$ in
		\eqref{eq:global-shallow} gives $e_\lambda\in L^3(M)$ and
		\[
		\left|\int_Mw e_\lambda^3\dd V_g\right|
		\le C_{s_0}\mu^{-1/6}.
		\]
		This proves the conditional estimate in Remark~\ref{rem:cusp-free}
		and hence \eqref{eq:global} in the cusp-free case. We may therefore
		assume that $M$ has at least one cusp.
		
		For $H=4\max\{y_0,\lambda\}$, Lemma~\ref{lem:cusp-fourier} with $N=1$ gives
		\[
		\sum_\ell\int_{Y_\ell\cap\{y\ge H\}}|e_\lambda|^3\dd V_g
		\le C\sum_\ell\|e_\lambda\|_{L^2(Y_\ell)}^3\le C.
		\]
		Thus $e_\lambda\in L^3(M)$, all weighted integrals are absolutely
		convergent, and bounded frequencies satisfy the theorem.
		
		Now take $\mu$ large enough, depending only on $s_0$, and set
		\begin{equation}\label{eq:global-parameters}
			N=\lfloor\mu^{3/4}\rfloor,\qquad H=8\mu^{1/4}.
		\end{equation}
		In each cusp, let $\overline w(y)=\int_0^1w(x,y)\dd x$.
		Since $w-\overline w$ and $\partial_xw$ have mean zero in $x$,
		the fundamental theorem of calculus and the identities
		\[
		y^2\partial_x^2w
		=\nabla^2w(y\partial_x,y\partial_x)+y\partial_yw,\qquad
		(y\partial_y)^2w=\nabla^2w(y\partial_y,y\partial_y)
		\]
		give, uniformly over the cusps,
		\begin{equation}\label{eq:horocycle-weight}
			\begin{gathered}
				\|w(\cdot,y)-\overline w(y)\|_\infty
				\le\|\partial_xw(\cdot,y)\|_\infty
				\le\|\partial_x^2w(\cdot,y)\|_\infty\le Cy^{-2},\\
				\sum_{j=0}^2\|(y\partial_y)^j\overline w\|_\infty
				\le\sum_{j=0}^2\|(y\partial_y)^jw\|_\infty\le C.
			\end{gathered}
		\end{equation}
		In cusp coordinates, $|\nabla\log y|_g=1$ gives
		\[
		\left|\log\frac{y(z)}{y(p)}\right|\le d_g(\widetilde p,z),\qquad
		e^{-2r}y(p)\le y(z)\le e^{2r}y(p)
		\quad\bigl(z\in B_g(\widetilde p,2r)\bigr).
		\]
		On the support of
		$\eta_c(d_g(\widetilde p,z))\chi(y(z)/H)$, we also have
		$y(p)>e^{-r}H$. Thus \eqref{eq:global-absolute-local} and
		\eqref{eq:global-unfolding} give
		\begin{equation}\label{eq:horocycle-error}
			\begin{aligned}
				\sum_\ell\left|\int_{Y_\ell}\chi(y/H)(w-\overline w)e_\lambda^3\dd V_g\right|
				&\le C_r\mu^{1/4}\sum_\ell
				\int_{Y_\ell\cap\{y\ge e^{-r}H\}}y(p)^{-2}m_\lambda(p)^3\dd V_g(p)\\
				&\le C_r\mu^{1/4}H^{-3/2}\int_Mm_\lambda(p)^2\dd V_g(p)
				\le C_r\mu^{-1/8}.
			\end{aligned}
		\end{equation}
		
		Fix one cusp $Y$ and let $u=e_\lambda|_Y$.
		Set $\omega(y)=\chi(y/H)\overline w(y)$ on $y>y_0$ and extend it by
		zero to $(0,y_0]$. It is smooth and
		$\sum_{j=0}^2\|(y\partial_y)^j\omega\|_\infty\le C$ by
		\eqref{eq:horocycle-weight}.
		Let $u_{\le N}$ be the sum of the modes $0<|n|\le N$ in
		\eqref{eq:global-fourier}. Since $(N+1)H\ge8\mu$, Parseval and
		Lemma~\ref{lem:cusp-fourier} give
		\begin{equation}\label{eq:global-replace-tail}
			\begin{aligned}
				\left|\int_Y\omega(y)(u^3-u_{\le N}^3)\dd V_g\right|
				\le C\|u-u_{\le N}\|_{L^\infty(\{y\ge H\})}\|u\|_{L^2(Y)}^2\le Ce^{-c\mu}\|u\|_{L^2(Y)}^3.
			\end{aligned}
		\end{equation}
		For $0<|n|\le N$ and $y_0\le y\le\mu/(8\pi N)$, we have
		$2\pi|n|y\le\mu/4$ and
		$\log(\mu/(8\pi Ny_0))=\tfrac14\log\mu+O_{y_0}(1)$.
		Thus Parseval and \eqref{eq:global-mass} imply
		\begin{equation}\label{eq:global-coefficients}
			\|u\|_{L^2(Y)}^2
			\ge c\mu^{-1}\log\mu\sum_{0<|n|\le N}|c_n|^2.
		\end{equation}
		In the following sums set $c_n=0$ for $n=0$ or $|n|>N$.
		Integration in $x$ retains $n_1+n_2+n_3=0$. Up to permutation and
		simultaneous change of sign, these triples are $(n_1,n_2,-n_1-n_2)$
		with $n_1,n_2\ge1$. Cauchy--Schwarz gives
		\[
		\begin{aligned}
			\sum_{n_1,n_2\ge1}\frac{|c_{n_1}c_{n_2}c_{-n_1-n_2}|}{\sqrt{n_1+n_2}}
			\le\left(\sum_{n_1,n_2\ge1}|c_{n_1}c_{n_2}|^2\right)^{1/2}
			\left(\sum_{k\ge2}\frac{k-1}{k}|c_{-k}|^2\right)^{1/2}\le\left(\sum_{n\ne0}|c_n|^2\right)^{3/2}.
		\end{aligned}
		\]
		The reversed signs obey the same bound. Lemma~\ref{lem:bessel-triple},
		with $a=2\pi n_1$, $b=2\pi n_2$, therefore yields
		\begin{equation}\label{eq:global-full-cubic}
			\begin{aligned}
				\left|\int_Y\omega(y)u_{\le N}^3\dd V_g\right|
				\le C\mu^{-3/2}\left(\sum_{0<|n|\le N}|c_n|^2\right)^{3/2}\le C(\log\mu)^{-3/2}\|u\|_{L^2(Y)}^3.
			\end{aligned}
		\end{equation}
		Finally, \eqref{eq:global-cutoff}, \eqref{eq:global-shallow},
		\eqref{eq:horocycle-error}, \eqref{eq:global-replace-tail}, and
		\eqref{eq:global-full-cubic} give
		\begin{equation}\label{eq:global-cusp-sum}
			\begin{aligned}
				\left|\int_Mw e_\lambda^3\dd V_g\right|
				\le C_{s_0}(\mu^{-1/24}+\mu^{-1/8})
				+C(\log\mu)^{-3/2}\sum_\ell\|e_\lambda\|_{L^2(Y_\ell)}^3\le C_{s_0}(\log\mu)^{-3/2},
			\end{aligned}
		\end{equation}
		since $\sum_\ell\|e_\lambda\|_{L^2(Y_\ell)}^3
		\le(\sum_\ell\|e_\lambda\|_{L^2(Y_\ell)}^2)^{3/2}\le1$.
		This proves \eqref{eq:global} because $\mu\approx\lambda$.
		
		For the mixed bound, the polarization identity is
		\begin{equation}\label{eq:polarization}
			24u_1u_2u_3
			=\sum_{\substack{\sigma\in\{-1,1\}^3\\\sigma_1\sigma_2\sigma_3=1}}
			(\sigma_1u_1+\sigma_2u_2+\sigma_3u_3)^3.
		\end{equation}
		Normalize each nonzero $u_j$ in $L^2(M)$. Each sum then satisfies
		the same eigenvalue equation and has $L^2$ norm at most $3$. The
		cubic bound applied to these sums gives the normalized mixed
		estimate. 
	\end{proof}
	
	\section{Sharpness}\label{sec:sharpness}
	
We give examples establishing
	the sharpness of Theorem~\ref{thm:upper}, with the metric, weight,
	and normalization region fixed as the frequency tends to infinity.

	\subsection{Zero curvature}
	\begin{proposition}[Sharpness of Theorem~\ref{thm:upper}\textup{(i)}]\label{prop:sharp-flat}
		\textup{(i)} On $\R^2$ with $g=\dd x^2+\dd y^2$, let
		$U=(-1/2,1/2)^2$. For every nonzero $0\le w\in C_c^\infty(U)$,
		there are real $e_\lambda\in C^\infty(\R^2)$, $\lambda\ge1$, such that
		\[
		-\Delta_g e_\lambda=\lambda^2e_\lambda\quad\text{on }\R^2,\qquad
		\|e_\lambda\|_{L^2(U)}=1,\qquad
		\lim_{\lambda\to\infty}\int_Uw e_\lambda^3\dd x\dd y
		=\sqrt{\frac23}\int_Uw\dd x\dd y>0.
		\]
		
		\textup{(ii)} On the flat torus
		\[
		M=\bigl(\R/(3^{-1/4}\mathbb Z)\bigr)
		\times\bigl(\R/(3^{1/4}\mathbb Z)\bigr),\qquad
		g=\dd x^2+\dd y^2,
		\]
		which has area one, there are real eigenfunctions $e_n$ with
		$\lambda_n\to\infty$ such that
		\begin{equation}\label{eq:flat-sharpness}
			-\Delta_ge_n=\lambda_n^2e_n,\qquad
			\|e_n\|_{L^2(M)}=1,\qquad
			\int_Me_n^3\dd V_g=\sqrt{\frac23}.
		\end{equation}
	\end{proposition}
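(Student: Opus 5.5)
The plan is to use the resonant triple of plane waves whose wave vectors form an equilateral triangle. Fix unit vectors $v_1,v_2,v_3\in\R^2$ with $v_1+v_2+v_3=0$ (pairwise angles $2\pi/3$). Put $k_j=\lambda v_j$ and set
\[
e_\lambda(x)=A_\lambda\sum_{j=1}^3\cos(k_j\cdot x),
\]
which is a real global solution of $-\Delta e_\lambda=\lambda^2e_\lambda$ on $\R^2$. The key algebraic step is to expand $e_\lambda^2$ and $e_\lambda^3$ by product-to-sum formulas into finite sums of $\cos(\xi\cdot x)$, with frequencies
\[
\xi\in\Bigl\{\textstyle\sum_j\epsilon_jk_j:\ \epsilon_j\in\{-2,\dots,2\}\text{ or }\{-3,\dots,3\}\Bigr\}.
\]
The zero-frequency terms are then identified as follows.
\begin{itemize}
\item In $e_\lambda^2$ the constant part is $A_\lambda^2\cdot\tfrac32$.
\item In $e_\lambda^3$ the terms $\cos^3\theta_j$ and $\cos^2\theta_i\cos\theta_j$ produce only the frequencies $k_j$, $3k_j$, $2k_i\pm k_j$, $k_j$. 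None of these vanishes, because the $k_j$ are pairwise non-parallel of equal length (for instance $2k_1+k_2=k_1-k_3$ has length $\sqrt3\lambda$).
\item The cross term $6\cos\theta_1\cos\theta_2\cos\theta_3$ has $\theta_3=-(\theta_1+\theta_2)$, since $k_1+k_2+k_3=0$. Its constant part is $6\cdot\frac14=\frac32$, coming from the frequency combinations $\pm(k_1+k_2+k_3)=0$.
\end{itemize}
Every other frequency appearing is a nonzero combination of length $\ge c\lambda$ for a fixed $c>0$ (a finite check on the fixed triangle, scaled by $\lambda$).

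For part (i), the nonzero-frequency terms are handled by integration by parts. Against $w\in C_c^\infty(U)$, each such term contributes $O_N(\lambda^{-N})$. Against $\mathbf 1_U$ for the square $U$, integrating explicitly in one coordinate gives $O(\lambda^{-1})$; this requires a frequency component bounded below along a coordinate axis, and otherwise one uses the other axis. Hence
\[
\|e_\lambda\|_{L^2(U)}^2=A_\lambda^2\bigl(\tfrac32+O(\lambda^{-1})\bigr),\qquad |U|=1.
\]
Choose $A_\lambda$ so that the norm is exactly one, so $A_\lambda\to(3/2)^{-1/2}$. Then
\[
\int_Uwe_\lambda^3\to\Bigl(\tfrac23\Bigr)^{3/2}\cdot\tfrac32\int w=\sqrt{\tfrac23}\int w .
\]
The main (mild) obstacle is making sure no accidental zero or small frequency appears among the $3^3$ combinations. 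This is uniform because the combinations are $\lambda$ times a fixed finite set of vectors built from $v_j$, and the zero ones are exactly the resonant ones listed above.

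For part (ii), the same function works provided the $k_j$ lie in the dual lattice $2\pi(3^{1/4}\mathbb Z\times3^{-1/4}\mathbb Z)$. For $n\ge1$, take
\[
k_1=2\pi n(0,\,2\cdot3^{-1/4}),\qquad k_2=2\pi n(3^{1/4},-3^{-1/4}),\qquad k_3=2\pi n(-3^{1/4},-3^{-1/4}).
\]
Then $k_1+k_2+k_3=0$, and each has length $4\pi n3^{-1/4}=\lambda_n$ (using $3^{1/2}+3^{-1/2}=4/\sqrt3$). So each $\cos(k_j\cdot x)$ is an eigenfunction on $M$. On the area-one torus, $\int\cos(\xi\cdot x)=0$ exactly for every nonzero lattice frequency $\xi$. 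Thus $\|e_n\|^2=A^2\cdot\frac32$, so $A=(2/3)^{1/2}$, and
\[
\int_Me_n^3=A^3\cdot\tfrac32=\sqrt{2/3}
\]
exactly, with no error terms.
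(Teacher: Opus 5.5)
Your proposal is correct and matches the paper's proof: the same resonant equilateral triple of plane waves (in (ii) your $k_j$ are exactly the paper's wave vectors at $\lambda_n=4\pi n3^{-1/4}$), with constant Fourier mode $3/2$ in both the square and the cube, and the remaining modes handled by integration by parts in (i) and by exact vanishing on the area-one torus in (ii). One small detail to add: normalizing for every $\lambda\ge1$, not just for large $\lambda$, requires $\sum_j\cos(k_j\cdot x)\not\equiv0$ on $U$, and this follows from its value $3$ at the origin, which is how the paper handles it.
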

	\begin{proof}
		For both parts use
		\[
		u_\lambda(x,y)=\cos(\lambda y)
		+\cos\left(\frac{\lambda(\sqrt3x-y)}2\right)
		+\cos\left(\frac{\lambda(-\sqrt3x-y)}2\right).
		\]
		The three phase gradients have length $\lambda$ and sum to zero. Thus
		$-\Delta_gu_\lambda=\lambda^2u_\lambda$.
		
		\textup{(i)} Apart from the constant terms, every Fourier mode of $u_\lambda^2$
		and $u_\lambda^3$ has frequency $\lambda\xi$ with fixed $\xi\ne0$.
		Integration by parts against $w$ gives
		\[
		\|u_\lambda\|_{L^2(U)}^2=\frac32+O(\lambda^{-1}),\qquad
		\int_Uw u_\lambda^3\dd x\dd y
		=\frac32\int_Uw\dd x\dd y+O_{w,N}(\lambda^{-N})
		\]
		for every $N$. Since $u_\lambda(0,0)=3$, set
		$e_\lambda=u_\lambda/\|u_\lambda\|_{L^2(U)}$ to obtain the stated limit.
		
		\textup{(ii)} For $\lambda_n=4\pi n3^{-1/4}$, the function $u_{\lambda_n}$
		is periodic on $M$. All its nonconstant Fourier terms integrate to zero, so
		\[
		\int_Mu_{\lambda_n}^2\dd x\dd y
		=\int_Mu_{\lambda_n}^3\dd x\dd y=\frac32.
		\]
		Thus $e_n=\sqrt{2/3}\,u_{\lambda_n}$ proves \eqref{eq:flat-sharpness}.
	\end{proof}

	\subsection{Arbitrarily slow convergence}
	\begin{proposition}[Sharpness of Theorem~\ref{thm:upper}\textup{(ii)}]
		\label{prop:sharp-slow}
		There are fixed $U\Subset\R^2$ and nonzero $0\le w\in C_c^\infty(U)$
		with the following property. For every
		$\varepsilon_*:[1,\infty)\to(0,\infty)$ with $\varepsilon_*(\lambda)\to0$,
		there are a fixed complete smooth metric $g$ on $\R^2$ and real solutions
		\begin{equation}\label{eq:sharp-normalization}
			-\Delta_ge_\lambda=\lambda^2e_\lambda\quad\text{on }U,
			\qquad \|e_\lambda\|_{L^2(U)}=1,
		\end{equation}
		such that, for some $\lambda_n\to\infty$,
		\begin{equation}\label{eq:sharp-ratio}
			\int_Uw e_\lambda^3\dd V_g\longrightarrow0,\qquad
			\frac{\displaystyle\int_Uw e_{\lambda_n}^3\dd V_g}
			{\varepsilon_*(\lambda_n)}\longrightarrow+\infty.
		\end{equation}
		Moreover,
		\[
		w(x,0)=0,\qquad K_g(x,y)<0\quad(y\ne0),\qquad
		\nabla^jK_g(x,0)=0\quad(j\ge0).
		\]
	\end{proposition}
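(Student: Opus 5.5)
The plan is to build a warped-product metric that is flat to infinite order along $\{y=0\}$ and negatively curved elsewhere. Its curvature will be so weak near the axis that, at frequencies $\lambda_n$ chosen inductively, a flat triple-wave configuration survives on a strip $\{|y|<\eta_n\}$ whose $w$-mass dominates $\varepsilon_*(\lambda_n)$. The first half of \eqref{eq:sharp-ratio}, convergence to zero along the whole family, comes for free: $K_g\ne0$ off a null set, so Theorem~\ref{thm:upper}\textup{(ii)} applies. All the work is in the lower bound along $\lambda_n$.

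\emph{Setup.} Take $U=(-1/2,1/2)^2$ and a fixed $0\le w\in C_c^\infty(U)$ with $w(x,y)=y^2\zeta(x,y)$, where $\zeta\ge0$, $\zeta(0,0)>0$. Then $w(x,0)=0$ and $\int_{|y|<\eta}w\approx\eta^3$. The metric is
\[
g=\dd y^2+G(y)^2\dd x^2,\qquad G=1+h,\qquad K_g=-\frac{h''}{1+h}.
\]
Here $h\ge0$ is even, flat at $0$, with $h''>0$ for $y\ne0$, and $h$ is modified far from $U$ so that $g$ is complete. Local solutions are produced by separation of variables. For each $k\in\{0,\pm\sqrt3\lambda/2\}$, solve
\[
f''+\frac{G'}{G}f'+\Bigl(\lambda^2-\frac{k^2}{G^2}\Bigr)f=0
\]
with the Cauchy data at $y=0$ of the corresponding flat modes of $u_\lambda$ from Proposition~\ref{prop:sharp-flat}. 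Summing the resulting $e^{ikx}f_k(y)$, taking real parts, and dividing by the $L^2(U)$ norm gives exact real solutions $e_\lambda$ satisfying \eqref{eq:sharp-normalization}.

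\emph{Inductive choice of $h$.} Set $\eta_n=\varepsilon_*(\lambda_n)^{1/6}$, with $\varepsilon_*$ replaced by a slower majorant if needed, so that $\eta_n^3/\varepsilon_*(\lambda_n)\to\infty$. Choose $\lambda_n$ and the profile of $h$ on the dyadic shells $\eta_{n+1}\le|y|\le\eta_n$ alternately.
\begin{itemize}
\item[(a)] On $|y|\le2\eta_n$, require $\lambda_n\|h\|_{C^2}\le\eta_n^{10}$. A WKB/Gronwall comparison for the ODE then shows that $e_{\lambda_n}$ differs from the normalized flat wave $\sqrt{2/3}\,u_{\lambda_n}$ by $o(\eta_n^3)$ in $L^\infty(\{|y|<2\eta_n\})$. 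Normalizing in $L^2(U)$ uses the global bound $\|e_\lambda\|_{L^2}\approx1$, which follows because each mode stays WKB-controlled on all of $U$: $k^2/G^2<\lambda^2$ there, so there are no turning points. The strip $\{|y|<\eta_n\}$ then contributes $\approx\sqrt{2/3}\int_{|y|<\eta_n}w\gtrsim\eta_n^3$.
\item[(b)] On $|y|\ge\eta_n/2$, where $|K_g|\ge\kappa_n>0$, require $\lambda_n$ so large that $(\lambda_n\kappa_n)^{-1/6}C_n\le\eta_n^{4}$. Here $C_n$ collects the $C^2$ norms of the cutoff weight below.
\end{itemize}
Condition (a) constrains $h$ near $0$ in terms of $\lambda_n$, and condition (b) constrains $\lambda_n$ in terms of $h$ on the next shell outward. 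The two are compatible because $h$ on $|y|<\eta_{n+1}$ is chosen after $\lambda_n$. A standard gluing of $e^{-1/\psi}$-type pieces keeps $h$ smooth, flat at $0$, and strictly convex off $0$.

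\emph{Splitting the integral.} Write $w=w\chi(y/\eta_n)+w(1-\chi(y/\eta_n))$. The outer piece is estimated by \eqref{eq:weight-c2}, or directly by Lemma~\ref{lem:nonzero-operator} after rescaling, with $\varepsilon\approx\kappa_n$; by (b) it is $O(\eta_n^4)$. The inner piece is computed from (a), with the transition layer $\eta_n\le|y|\le2\eta_n$ also handled by (a). This gives
\[
\int_Uwe_{\lambda_n}^3\dd V_g\ge c\eta_n^3,
\]
which yields the second half of \eqref{eq:sharp-ratio}.

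\emph{Main obstacle.} The hardest step is making the bound in (b) genuinely quantitative. The constant in \eqref{eq:weight-c2} depends on the weight's support and on the lower bound of the curvature. Both degenerate as $\eta_n\to0$, so the rescaling argument of Lemma~\ref{lem:nonzero-operator} has to be tracked explicitly, with the covering at scale $\rho\approx\eta_n$, to produce an explicit $C_n$ polynomial in $\eta_n^{-1}$. Once $C_n$ is explicit, $\lambda_n$ can always be taken large enough, and the induction closes.
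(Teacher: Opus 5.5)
\textbf{There is a gap: conditions (a) and (b) cannot hold at the same time.} Your warped-product setup, the separated modes with $k\in\{0,\pm\sqrt3\lambda/2\}$, and the use of Theorem~\ref{thm:upper}\textup{(ii)} for the first limit in \eqref{eq:sharp-ratio} all agree with the paper. The lower bound along $\lambda_n$, however, does not go through.

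Your splitting needs both (a) and (b) on the layer $\eta_n/2\le|y|\le2\eta_n$. The outer piece $w(1-\chi(y/\eta_n))$ lives on $|y|\ge\eta_n$, and you assign the layer up to $2\eta_n$ to (a). On that layer, (a) gives $|K_g|=h''/(1+h)\le\|h\|_{C^2}\le\eta_n^{10}/\lambda_n$. Hence $\kappa_n\le\eta_n^{10}/\lambda_n$ and $\lambda_n\kappa_n\le\eta_n^{10}<1$. But (b) demands $\lambda_n\kappa_n\ge C_n^6\eta_n^{-24}\gg1$.

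Moving the cutoff scale does not repair this. In the WKB picture, the cubic moment is governed by the phase offset $\phi_a=2s_{\xi_*}-y$ between the interacting modes, where $a=G$. The flat comparison needs $\lambda_n\phi_a=o(1)$, while curvature decay needs $\lambda_n\phi_a$ large. Since $\phi_a$ increases continuously from $0$, there is always a layer where $\lambda_n\phi_a\approx1$. Neither of your tools controls that layer.

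Two further problems remain. The quantitative form of \eqref{eq:weight-c2}, for weights approaching a line where $K_g$ vanishes to infinite order, is still undone; you flag this yourself as the main obstacle. Also, $\eta_n=\varepsilon_*(\lambda_n)^{1/6}$ is circular with (b): (b) needs $h$ on $|y|\ge\eta_n/2$ before $\lambda_n$, and hence $\eta_n$, is fixed.

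\textbf{The missing idea is positivity, which removes the need for any upper bound away from the axis.} For the same separated solutions, the paper uses Liouville--Green asymptotics to obtain \eqref{eq:cubic-asymptotics}:
\[
\int_Uwu_\lambda^3\,dV_g=\tfrac32\int\beta_a(y)\cos(\lambda\phi_a(y))\,dy+O_a(\lambda^{-1}),
\]
where $\beta_a\ge0$ and $\beta_a\gtrsim y^2$ near $0$. It then averages over $\lambda\ge n$ against $\frac{2}{\sqrt\pi\,n^2}e^{-\lambda^2/n^4}$, which turns $\cos(\lambda\phi_a)$ into $e^{-n^4\phi_a^2/4}>0$. Every $y$, including your uncontrolled layer and the whole outer region, then contributes nonnegatively and can be discarded. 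The strip $|y|\le r_n$, where $|\phi_a|\le n^{-2}$, alone gives $\gtrsim r_n^3$. Since the averaging weight has mass at most one, some $\lambda_n\ge n$ has cubic moment $\ge cr_n^3$.

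The only requirement on the metric is $a-1\le2^{-n-1}y^2$ on $|y|\le r_n$. One explicit series for $a$ arranges this, with $K_g<0$ off the axis and infinite-order flatness on it, and no induction is needed. The choice $r_n=(\sup_{\lambda\ge n}\varepsilon_*(\lambda)+n^{-1})^{1/6}$ removes the circularity, since $\varepsilon_*(\lambda_n)\le r_n^6$ for every $\lambda_n\ge n$. It also makes the averaged error $C_an^{-1}\le C_ar_n^6$ negligible against $r_n^3$.
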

	\begin{proof}
		For the local constructions, fix
		\begin{equation}\label{eq:fixed-U-w}
			U=(-1/2,1/2)^2,\qquad
			0\le\psi_1\in C_c^\infty((-1/4,1/4)),\qquad
			\psi_1(0)>0,\quad\int_\R\psi_1=1.
		\end{equation}
		For this construction, choose $0\le\psi_2\in C_c^\infty((-1/4,1/4))$ with
		$\psi_2(y)=y^2$ for $|y|\le1/8$, and set $w(x,y)=\psi_1(x)\psi_2(y)$.
		
		For $a\in C^\infty(\R)$ with $a\ge15/16$, $a(0)=1$, and $a'(0)=0$,
		the metric and curvature formulas \cite[Example~3.2.12]{Petersen} are
		\begin{equation}\label{eq:warped-metric}
			\begin{gathered}
				g_a=\dd y^2+a(y)^2\dd x^2,\qquad dV_{g_a}=a\dd x\dd y,\\
				\Delta_{g_a}=\partial_y^2+(a'/a)\partial_y+a^{-2}\partial_x^2,
				\qquad K_{g_a}=-a''/a.
			\end{gathered}
		\end{equation}
		The metric is complete because $g_a\ge(15/16)^2(\dd x^2+\dd y^2)$.
		Fix $\xi_*=\sqrt3/2<\xi_0<15/16$. For $|\xi|\le\xi_0$, let
		\[
		q_\xi=1-\xi^2a^{-2},\qquad
		s_\xi(y)=\int_0^y\sqrt{q_\xi(t)}\dd t,\qquad
		b_\xi=a^{-1/2}\left(\frac{1-\xi^2}{q_\xi}\right)^{1/4},
		\]
		and let $v_{\lambda,\xi}$ solve
		\begin{equation}\label{eq:exact-ode}
			v_{\lambda,\xi}''+(a'/a)v_{\lambda,\xi}'+\lambda^2q_\xi v_{\lambda,\xi}=0,
			\qquad v_{\lambda,\xi}(0)=1,\quad
			v_{\lambda,\xi}'(0)=i\lambda\sqrt{1-\xi^2}.
		\end{equation}
		Then $e^{i\lambda\xi x}v_{\lambda,\xi}$ and
		$e^{i\lambda\xi x}\overline{v_{\lambda,\xi}}$ are exact solutions.
		Eliminating the first-derivative term gives
		\begin{equation}\label{eq:common-ode}
			(a^{1/2}v_{\lambda,\xi})''+
			\left(\lambda^2q_\xi+\frac{(a')^2}{4a^2}-\frac{a''}{2a}\right)
			a^{1/2}v_{\lambda,\xi}=0.
		\end{equation}
		Since $q_\xi\ge1-(16\xi_0/15)^2>0$, the oscillatory
		Liouville--Green approximation \cite[Chapter~6]{Olver}, matched to
		\eqref{eq:exact-ode} using $b_\xi(0)=1$ and $b_\xi'(0)=0$, gives uniformly
		for $|\xi|\le\xi_0$ and $|y|\le1/2$,
		\begin{equation}\label{eq:ode-asymptotics}
			v_{\lambda,\xi}(y)=b_\xi(y)e^{i\lambda s_\xi(y)}+O_a(\lambda^{-1}),
			\qquad |v_{\lambda,\xi}(y)|\le C_a.
		\end{equation}
		These constants are uniform when $\|a-1\|_{C^2([-1,1])}\le1$.
		
		Define the real exact solution
		\[
		u_\lambda(x,y)=\operatorname{Re}v_{\lambda,0}(y)
		+2\cos(\lambda\xi_*x)\operatorname{Re}v_{\lambda,\xi_*}(y).
		\]
		Then $u_\lambda(0,0)=3$ and $\|u_\lambda\|_{L^2(U,g_a)}\le C_a$.
		Let
		\begin{equation}\label{eq:reduced-moment}
			\phi_a=2s_{\xi_*}-y,\qquad
			\beta_a=\frac{\psi_2}{2\sqrt{a q_{\xi_*}}}.
		\end{equation}
		Since $\int_\R\psi_1(x)e^{ik\lambda\xi_*x}\dd x=O_N(\lambda^{-N})$
		for $k=\pm1,\pm2,\pm3$, expanding the cube and using
		\eqref{eq:ode-asymptotics} gives
		\[
		\begin{aligned}
			\int_Uw u_\lambda^3\dd V_{g_a}
			&=\int_\R\psi_2 a\bigl[b_0^3\cos^3(\lambda y)
			+6b_0b_{\xi_*}^2\cos(\lambda y)\cos^2(\lambda s_{\xi_*})\bigr]\dd y
			+O_a(\lambda^{-1}).
		\end{aligned}
		\]
		Use
		\[
		6\cos A\cos^2 B=3\cos A+\tfrac32\cos(A+2B)+\tfrac32\cos(A-2B),
		\qquad a b_0b_{\xi_*}^2=\frac1{2\sqrt{a q_{\xi_*}}}.
		\]
		The phases $y$, $3y$, and $y+2s_{\xi_*}(y)$ have derivatives bounded
		away from zero. One integration by parts in $y$ yields
		\begin{equation}\label{eq:cubic-asymptotics}
			\int_Uw u_\lambda^3\dd V_{g_a}
			=\frac32\int_\R\beta_a(y)\cos(\lambda\phi_a(y))\dd y
			+O_a(\lambda^{-1}).
		\end{equation}
		For sufficiently large integers $n$, set
		\begin{equation}\label{eq:slow-radii}
			r_n=\left(\sup_{\lambda\ge n}\varepsilon_*(\lambda)+n^{-1}\right)^{1/6}.
		\end{equation}
		Choose $n_0$ so that $r_n<1/8$ for $n\ge n_0$. Then $r_n\downarrow0$,
		$\sup_{\lambda\ge n}\varepsilon_*(\lambda)\le r_n^6$, and $n^{-1}\le r_n^6$.
		Choose a smooth $0\le\chi\le1$ that vanishes on $(-\infty,1]$ and is
		positive on $(1,\infty)$, with $\chi=1$ on $[2,\infty)$. Define
		\begin{equation}\label{eq:slow-metric}
			a(y)=1+\sum_{n\ge n_0}2^{-n}r_n^n
			\int_0^{|y|}(|y|-s)\chi(s/r_n)\dd s.
		\end{equation}
		For every fixed $j\ge0$, the derivatives of order $j$ of the series for
		$a''$ converge uniformly, since
		$\sum_{n\ge n_0}2^{-n}r_n^{n-j}<\infty$. Each summand vanishes near zero.
		Consequently
		\begin{equation}\label{eq:limiting-curvature}
			\begin{gathered}
				a\ge1,\qquad \|a-1\|_{C^2([-1,1])}\le1,\qquad a^{(j)}(0)=0\quad(j\ge1),\\
				a''(y)=\sum_{n\ge n_0}2^{-n}r_n^n\chi(|y|/r_n)>0\quad(y\ne0).
			\end{gathered}
		\end{equation}
		For $|y|\le r_n$, only indices greater than $n$ contribute, so
		\[
		0\le a(y)-1\le 2^{-n-1}y^2,\qquad
		0\le\phi_a'(y)=2\sqrt{1-3/(4a(y)^2)}-1\le3(a(y)-1).
		\]
		Thus, for large $n$,
		\begin{equation}\label{eq:slow-phase}
			|\phi_a(y)|\le2^{-n-1}|y|^3\le n^{-2},\qquad
			\beta_a(y)\ge c y^2\qquad(|y|\le r_n).
		\end{equation}
		The Fourier transform of a Gaussian gives
		\[
		\frac{2}{\sqrt\pi\,n^2}\int_0^\infty
		e^{-\lambda^2/n^4}\cos(\lambda\phi_a(y))\dd\lambda
		=e^{-n^4\phi_a(y)^2/4}.
		\]
		The contribution of $0\le\lambda<n$ is $O(n^{-1})$ after integration
		against $\beta_a$. On $\lambda\ge n$, the error in
		\eqref{eq:cubic-asymptotics} is at most $C_a n^{-1}$. Hence
		\eqref{eq:slow-phase} and Fubini give
		\begin{equation}\label{eq:positive-average}
			\begin{aligned}
				&\frac{2}{\sqrt\pi\,n^2}\int_n^\infty e^{-\lambda^2/n^4}
				\left(\int_Uw u_\lambda^3\dd V_{g_a}\right)\dd\lambda\\
				&\quad=\frac32\int_\R\beta_a(y)e^{-n^4\phi_a(y)^2/4}\dd y+O_a(n^{-1})\\
				&\quad\ge cr_n^3-C_a n^{-1}\ge c'r_n^3.
			\end{aligned}
		\end{equation}
		The last inequality uses $n^{-1}\le r_n^6=o(r_n^3)$. The nonnegative
		averaging weight has integral at most one, so some $\lambda_n\ge n$ satisfies
		\begin{equation}\label{eq:chosen-frequency}
			\int_Uw u_{\lambda_n}^3\dd V_{g_a}\ge c r_n^3.
		\end{equation}
		For every $\lambda\ge1$, set $e_\lambda=u_\lambda/\|u_\lambda\|_{L^2(U,g_a)}$.
		Since $u_\lambda(0,0)=3$ and $\|u_\lambda\|_{L^2(U,g_a)}\le C_a$,
		\[
		\frac{\displaystyle\int_Uw e_{\lambda_n}^3\dd V_{g_a}}
		{\varepsilon_*(\lambda_n)}
		\ge c\frac{r_n^3}{\varepsilon_*(\lambda_n)}
		\ge c r_n^{-3}\longrightarrow\infty.
		\]
		Equation \eqref{eq:limiting-curvature} gives $\{K_{g_a}=0\}=\{y=0\}$ and
		$\nabla^jK_{g_a}(x,0)=0$ for all $j$. Theorem~\ref{thm:upper}\textup{(ii)}
		gives convergence to zero for the whole family.
	\end{proof}
	
	\subsection{Finite-order vanishing of curvature}\label{sec:sharp-finite}
	\begin{proposition}[Sharpness of Theorem~\ref{thm:upper}\textup{(iii)}]
		\label{prop:sharp-finite}
		\textup{(i)} For the fixed rectangle $U$ in \eqref{eq:fixed-U-w}, there
		is a fixed $0\le w\in C_c^\infty(U)$ with $w(0)>0$ such that, for every
		integer $m\ge0$, a fixed complete smooth metric $g_m$ satisfies
		\begin{equation}\label{eq:sharp-finite-condition}
			\inf_{x\in\supp w}\sum_{j=0}^m|\nabla^jK_{g_m}(x)|_{g_m}>0
		\end{equation}
		and admits real local solutions with
		\begin{equation}\label{eq:power-sharpness}
			\begin{gathered}
				-\Delta_{g_m}e_\lambda=\lambda^2e_\lambda\quad\text{on }U,\qquad
				\|e_\lambda\|_{L^2(U,g_m)}=1,\\
				c_m\lambda^{-1/(6+2m)}\le\int_Uw e_\lambda^3\dd V_{g_m}
				\le C_m\lambda^{-1/(6+2m)}
			\end{gathered}
		\end{equation}
		for all sufficiently large $\lambda$.
		
		\textup{(ii)} On the standard round sphere $S^2$, for every
		sufficiently large even $n$, there is a real spherical harmonic $e_n$ with
		\begin{equation}\label{eq:sphere-sharpness}
			\begin{gathered}
				-\Delta_{S^2}e_n=\lambda_n^2e_n,\qquad
				\lambda_n=\sqrt{n(n+1)},\qquad\|e_n\|_{L^2(S^2)}=1,\\
				c\lambda_n^{-1/6}\le\int_{S^2}e_n^3\dd V_g
				\le C\lambda_n^{-1/6}.
			\end{gathered}
		\end{equation}
	\end{proposition}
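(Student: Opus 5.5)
For (i) I will reuse the warped-product machinery from the proof of Proposition~\ref{prop:sharp-slow}, with $w=\psi_1(x)\psi_2(y)$ and $\psi_2(0)>0$ now required. With $\xi_*=\sqrt3/2$ and the same $u_\lambda$, I take $g_m=g_a$ for $a(y)=1+y^{m+2}\theta(y)$ near $y=0$. Here $\theta$ is smooth, positive near $0$, and is chosen so that $a\ge15/16$, $a(0)=1$, $a'(0)=0$ and $\|a-1\|_{C^2}\le1$. Then $K_{g_m}=-a''/a$ vanishes to order exactly $m$ at $y=0$ and nowhere else on a neighbourhood of $\supp w$. Shrinking $\supp\psi_2$ if necessary, this gives \eqref{eq:sharp-finite-condition} via \eqref{eq:covariant-jet-comparison}. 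The asymptotic formula \eqref{eq:cubic-asymptotics} still holds:
\[
\int_Uwu_\lambda^3\dd V_{g_m}=\tfrac32\int\beta_a(y)\cos(\lambda\phi_a(y))\dd y+O(\lambda^{-1}),
\qquad \phi_a=2s_{\xi_*}-y .
\]
Since $q_{\xi_*}=1-\tfrac34a^{-2}$, we have $\phi_a'=2\sqrt{1-3/(4a^2)}-1=\tfrac32(a-1)+O((a-1)^2)$. Hence $\phi_a(y)=c_0y^{m+3}(1+O(y))$ with $c_0=\tfrac32\theta(0)/(m+3)\ne0$, and $\beta_a(0)=\psi_2(0)>0$.

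The main step is then a one-dimensional stationary phase at a degenerate critical point of order $m+3$. After the substitution $t=y(c_0(1+O(y)))^{1/(m+3)}$ the phase becomes $\pm t^{m+3}$, and the standard asymptotic gives
\[
\int\beta\cos(\lambda\phi)\dd y=C_{m}\beta(0)\lambda^{-1/(m+3)}+O(\lambda^{-2/(m+3)}),
\qquad C_m\propto\Gamma\!\left(\tfrac1{m+3}\right)\cos\!\left(\tfrac{\pi}{2(m+3)}\right)
\]
when $m+3$ is even. When $m+3$ is odd the sine contributions cancel and the cosine contribution is halved but remains nonzero; in either case the leading constant is positive.

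The delicate point is the sign and exact nonvanishing of this constant, so I choose the sign of $\theta$ (and hence of $K$ near $0$) accordingly. Since $\lambda^{-1/(m+3)}=\lambda^{-2/(6+2m)}$, this is the square of the target rate. I therefore expect the normalization must differ from the slow-convergence construction. The fix I would try first is to let $\psi_2$ be concentrated at scale $\lambda$-independent but normalize $e_\lambda$ so that $\|u_\lambda\|_{L^2(U)}\approx1$, and to re-examine which terms survive. If the rate really comes out as $\lambda^{-1/(m+3)}$, I would instead replace the $\cos(\lambda y)$ mode by a one-sided (complex-phase) combination so that the relevant cubic phase has a critical point of order $(m+3)$ in a variable whose Jacobian contributes an extra $\lambda^{1/(6+2m)}$. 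Concretely, I would take the three plane waves concentrated in a $\lambda^{-1/2}$ angular window to emulate the $TT^*$ loss. Matching the exponent $1/(6+2m)$ exactly is the obstacle I expect, and I would first recheck that $1/(2(m+3))$ arises as half of $1/(m+3)$ through a Cauchy--Schwarz-type normalization, for instance by using $y$-localized wave packets of width $\lambda^{-1/(2(m+3))}$.

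For (ii) I would use the zonal-type harmonic built from three great-circle highest-weight states meeting at $2\pi/3$ angles, the spherical analogue of $u_\lambda$ in Proposition~\ref{prop:sharp-flat}. Computing its cube by stationary phase near the common points, where $K=1$ gives an order-$3$ degenerate phase $\phi\sim y^3$, should yield $\asymp\lambda^{-1/6}$ after $L^2$-normalization. Evenness of $n$ ensures that the antipodal contributions add rather than cancel. The upper bound is Corollary~\ref{cor:constant-curvature}.
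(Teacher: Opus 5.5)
\textbf{There is a genuine gap: neither part reaches the sharp rate, and the step that would have to produce it is left as speculation.}

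\emph{Part (i).} The construction you actually analyse, the unlocalized function $u_\lambda$ from Proposition~\ref{prop:sharp-slow}, gives $\int_U w u_\lambda^3\dd V_{g_m}\asymp\lambda^{-1/(m+3)}$, as you noticed. This decays strictly faster than $\lambda^{-1/(6+2m)}$, so it says nothing about sharpness. The product phase $\lambda\phi_a\approx\frac{3}{m+3}\lambda y^{m+3}$ depends only on $y$. (Note $\phi_a'=3(a-1)+O((a-1)^2)$, not $\tfrac32(a-1)$.) So waves of amplitude $O(1)$ add coherently only on a strip $|y|\lesssim\lambda^{-1/(m+3)}$. None of the fixes you list is carried out, and the concrete scales you name are not the right ones. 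For example, an angular window of size $\lambda^{-1/2}$ means spatial width $\lambda^{-1/2}$.

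\emph{Part (ii).} This fails in the opposite direction, assuming your ``zonal-type'' function is a combination of the three highest-weight states $(\zeta_j\cdot x)^n$. Each is a Gaussian beam of width $n^{-1/2}$ with $\|(\zeta_j\cdot x)^n\|_2^2\asymp n^{-1/2}$. Their triple integral is $\asymp n^{-1}$ by \eqref{eq:sphere-triple-moment}. After normalization you therefore get $\asymp n^{3/4}n^{-1}=n^{-1/4}$, not $n^{-1/6}$. On the $n^{-1/2}$-ball where the beams meet, $n|y|^3\approx n^{-1/2}$, so the curvature never enters.

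\emph{The missing idea.} Use three distinct packets at the intermediate scale $\delta=c_*\lambda^{-1/(m+3)}$ (on the sphere, $\delta_0=c_*n^{-1/3}$). Each packet should have $L^2$ norm $O(1)$ and amplitude $\delta^{-1/2}$ on a strip of width $\delta$ transverse to its direction. Their product then lives on a $\delta$-ball on which $\lambda|y|^{m+3}\lesssim c_*^{m+3}$ stays bounded. This gives $\delta^{-3/2}\cdot\delta^2=\delta^{1/2}\approx\lambda^{-1/(6+2m)}$.

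Heuristically, packets of width $\delta$ yield about $\min(\delta^{1/2},\delta^{-1/2}\lambda^{-1/(m+3)})$. Your two constructions sit at the suboptimal endpoints $\delta=1$ and $\delta=\lambda^{-1/2}$.

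\emph{How the paper builds the packets.} They are exact solutions:
\begin{itemize}
\item in (i), $\widehat G$-weighted averages of $e^{i\lambda\xi x}v_{\lambda,\xi}(y)$ over $\xi$-windows of width $(\lambda\delta)^{-1}$ around $0$ and $\pm\xi_*$;
\item in (ii), averages of $(\zeta_j(t)\cdot x)^n$ over rotations $|t|\lesssim\delta_0$.
\end{itemize}
The positive lower bound for $\delta^{-1/2}\operatorname{Re}\int w\,u_1u_2u_3$ then comes from rescaling and dominated convergence in (i). In (ii) it comes from $\log\mathcal A(t)=-(t_1+t_2+t_3)^2/3+O(|t|^3)$.

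\emph{Two further points.} Since the packets are three different complex functions, you also need \eqref{eq:real-conversion} to pass to a single real normalized $e_\lambda$. That step uses real and imaginary parts together with \eqref{eq:polarization}. Finally, $w$ must not depend on $m$. So rather than shrinking $\supp\psi_2$, make $a_m-1=y^{m+2}$ on a fixed neighbourhood, as with $a_m=1+\chi(4y)y^{m+2}$.
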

	\begin{proof}
		For complex solutions $u_j$ with $\|u_j\|_2\le C$, expansion into real
		and imaginary parts gives a real product with integral of absolute value
		at least $\frac18|\int w\prod_{j=1}^3u_j|$.
		Equation \eqref{eq:polarization} gives a real linear combination with
		cubic integral of absolute value at least six times this value and
		$L^2$ norm at most $3C$. Choosing its sign and normalizing gives
		\begin{equation}\label{eq:real-conversion}
			\|e_\lambda\|_2=1,\qquad
			\int w e_\lambda^3\dd V_g
			\ge\frac1{36C^3}\left|\int w\prod_{j=1}^3u_j\dd V_g\right|
		\end{equation}
		whenever the integral on the right is nonzero.
		
		\textup{(i)} Choose an even $\chi\in C_c^\infty((-1,1))$ with
		$0\le\chi\le1$ and $\chi=1$ on $[-1/2,1/2]$, and let
		\[
		a_m(y)=1+\chi(4y)y^{m+2},\qquad g_m=\dd y^2+a_m(y)^2\dd x^2.
		\]
		For this construction, retain $\psi_1$ from \eqref{eq:fixed-U-w} and
		choose $0\le\psi_2\in C_c^\infty((-1/16,1/16))$ equal to one
		near zero. Let $w(x,y)=\psi_1(x)\psi_2(y)$.
		These $U,w$ are independent of $m$. The metric is complete since $a_m\ge15/16$.
		On $|y|<1/8$,
		\begin{equation}\label{eq:finite-sharp-curvature}
			K_{g_m}(y)=-\frac{(m+1)(m+2)y^m}{1+y^{m+2}},\qquad
			\partial_y^mK_{g_m}(0)=-(m+1)(m+2)m!.
		\end{equation}
		The curves $y\mapsto(x_0,y)$ are unit-speed geodesics, so
		$(\nabla^mK_{g_m})(\partial_y,\ldots,\partial_y)=\partial_y^mK_{g_m}$
		for $m\ge1$. Since $K_{g_m}(y)\ne0$ for $y\ne0$, compactness gives
		\eqref{eq:sharp-finite-condition}.
		
		Choose an even $G\in\mathcal S(\R)$ with $G\ge0$, $G(0)>0$, and
		$\widehat G\in C_c^\infty(\R)$, with Fourier normalization
		$\widehat G(t)=(2\pi)^{-1/2}\int e^{-its}G(s)\dd s$.
		Such $G$ is the square of the inverse Fourier transform of a nonzero
		even nonnegative smooth function of compact support.
		Use \eqref{eq:exact-ode} with $a=a_m$. For fixed small $c_*>0$, define
		\begin{equation}\label{eq:common-average}
			\begin{gathered}
				\delta_m=c_*\lambda^{-1/(m+3)},\\
				u_{1,\lambda}(x,y)=\frac{\delta_m^{-1/2}}{\sqrt{2\pi}}
				\int\widehat G(t)e^{itx/\delta_m}v_{\lambda,t/(\lambda\delta_m)}(y)\dd t,\\
				u_{2,\lambda}(x,y)=\frac{\delta_m^{-1/2}}{\sqrt{2\pi}}
				\int\widehat G(t)e^{i(\lambda\xi_*+t/\delta_m)x}
				\overline{v_{\lambda,\xi_*+t/(\lambda\delta_m)}(y)}\dd t,
				\qquad u_{3,\lambda}(x,y)=u_{2,\lambda}(-x,y).
			\end{gathered}
		\end{equation}
		All parameters lie in $(-\xi_0,\xi_0)$ for large $\lambda$.
		These are exact solutions because $a_m$ is independent of $x$.
		For fixed $|y|\le1/2$, Plancherel and \eqref{eq:ode-asymptotics} give
		\[
		\int_\R|u_{1,\lambda}(x,y)|^2\dd x
		=\int_\R|\widehat G(t)|^2
		|v_{\lambda,t/(\lambda\delta_m)}(y)|^2\dd t\le C_m.
		\]
		The same bound holds for $u_{2,\lambda}$ and $u_{3,\lambda}$, so
		$\|u_{j,\lambda}\|_{L^2(U,g_m)}\le C_m$ for $j=1,2,3$.
		
		Let
		\[
		\ell(x,y)=x-\left.\partial_\xi s_\xi(y)\right|_{\xi=\xi_*}
		=x+\xi_*\int_0^y\frac{\dd t}{a_m(t)^2\sqrt{q_{\xi_*}(t)}}.
		\]
		Then $\partial_x\ell=1$, $\partial_y\ell\ge c_m>0$, and
		$|(x,y)|\le C_m(|x|+|\ell(x,y)|)$ on $U$.
		Substitute \eqref{eq:ode-asymptotics} into \eqref{eq:common-average}.
		The first $t$ derivatives of the two phases are respectively
		\[
		\frac{x}{\delta_m}+O_m((\lambda\delta_m^2)^{-1}),\qquad
		\frac{\ell(x,y)}{\delta_m}+O_m((\lambda\delta_m^2)^{-1}).
		\]
		Their higher derivatives and the amplitude derivatives are uniformly
		bounded, since $\lambda\delta_m^2\to\infty$. Three integrations by parts
		in $t$ bound the two leading integrals by
		$C_m\delta_m^{-1/2}(1+|x|/\delta_m)^{-3}$ and
		$C_m\delta_m^{-1/2}(1+|\ell|/\delta_m)^{-3}$.
		The remainder is $O_m(\lambda^{-1}\delta_m^{-1/2})$ and is absorbed into
		these bounds because $x,\ell$ are bounded on $U$ and
		$\lambda^{-1}\le c_*^{-3}\delta_m^3$. Consequently
		\begin{equation}\label{eq:common-average-decay}
			\begin{gathered}
				|u_{1,\lambda}(x,y)|\le C_{m,c_*}\delta_m^{-1/2}
				(1+|x|/\delta_m)^{-3},\\
				|u_{2,\lambda}(x,y)|\le C_{m,c_*}\delta_m^{-1/2}
				(1+|\ell(x,y)|/\delta_m)^{-3},\\
				\delta_m^{3/2}\prod_{j=1}^3|u_{j,\lambda}(\delta_m z)|
				\le C_{m,c_*}(1+|z|)^{-3}.
			\end{gathered}
		\end{equation}
		In the last line the product is extended by zero outside $\delta_m^{-1}U$.
		
		On compact sets of $(x,y)$, Taylor's formula in $\xi$ and Fourier
		inversion give
		\[
		\begin{aligned}
			\delta_m^{1/2}e^{-i\lambda\delta_m y}
			u_{1,\lambda}(\delta_m x,\delta_m y)&\longrightarrow G(x),\\
			\delta_m^{1/2}e^{-i\lambda[\xi_*\delta_m x-s_{\xi_*}(\delta_m y)]}
			u_{2,\lambda}(\delta_m x,\delta_m y)&\longrightarrow G(x+\sqrt3y).
		\end{aligned}
		\]
		Indeed $b_\xi(\delta_m y)\to1$, the quadratic Taylor error in the phase
		is $O_m((\lambda\delta_m)^{-1})$, and
		$\ell(\delta_m x,\delta_m y)/\delta_m\to x+\sqrt3y$.
		Reflection gives the third limit with $G(x-\sqrt3y)$.
		Moreover,
		\[
		y-2s_{\xi_*}(y)=-\frac3{m+3}y^{m+3}+O_m(y^{2m+5}),\qquad
		\lambda\delta_m^{m+3}=c_*^{m+3}.
		\]
		Since $(1+|z|)^{-3}\in L^1(\R^2)$, dominated convergence after
		$z\mapsto\delta_m z$ gives
		\begin{equation}\label{eq:finite-sharp-limit}
			\begin{aligned}
				&\lim_{\lambda\to\infty}\delta_m^{-1/2}
				\operatorname{Re}\int_Uw\prod_{j=1}^3u_{j,\lambda}\dd V_{g_m}\\
				&\quad=w(0)\int_{\R^2}G(x)G(x+\sqrt3y)G(x-\sqrt3y)
				\cos\left(\frac{3c_*^{m+3}}{m+3}y^{m+3}\right)\dd x\dd y>0.
			\end{aligned}
		\end{equation}
		The last integral is positive for sufficiently small $c_*>0$, by
		continuity at $c_*=0$.
		Equation~\eqref{eq:real-conversion} gives the lower bound in
		\eqref{eq:power-sharpness}. Theorem~\ref{thm:upper}\textup{(iii)} gives the upper bound.
		
		\medskip
		\textup{(ii)} Use the same cutoff $\chi$. For $v=(v_1,v_2)\in\mathbb R^2$, let
		$v^\perp=(-v_2,v_1)$. For even $n\ge2$, let
		\[
		\begin{gathered}
			\delta_0=c_*n^{-1/3},\qquad
			\theta_j=2\pi(j-1)/3,\qquad v_j=(\cos\theta_j,\sin\theta_j),\\
			\zeta_j(t)=(\sin t\,v_j^\perp+i v_j,\cos t)\in\mathbb C^3.
		\end{gathered}
		\]
		The dot product on $\mathbb C^3$ is the complex bilinear extension
		of the Euclidean inner product. Since
		$\zeta_j(t)\cdot\zeta_j(t)=0$,
		\[
		\Delta_{\R^3}(\zeta_j(t)\cdot x)^n
		=n(n-1)(\zeta_j(t)\cdot\zeta_j(t))(\zeta_j(t)\cdot x)^{n-2}=0.
		\]
		Thus the functions
		\begin{equation}\label{eq:sphere-average}
			u_{j,n}(x)=\left(\frac{n}{\delta_0}\right)^{1/2}
			\int_\R\chi(t/\delta_0)(\zeta_j(t)\cdot x)^n\dd t,
			\qquad x\in S^2,
		\end{equation}
		satisfy $-\Delta_{S^2}u_{j,n}=n(n+1)u_{j,n}$.
		
		For $\xi_1,\ldots,\xi_{2k}\in\mathbb C^3$, Isserlis's formula
		\cite[Theorem~1.28]{Janson}, extended by complex multilinearity, and
		polar integration give
		\[
		\begin{gathered}
			(2\pi)^{-3/2}\int_0^\infty r^{2k+2}e^{-r^2/2}\dd r
			=\frac{(2k+1)!}{4\pi\,2^k k!},\\
			\int_{S^2}\prod_{j=1}^{2k}(\xi_j\cdot x)\dd V_g(x)
			=\frac{4\pi\,2^k k!}{(2k+1)!}
			\sum_{\pi}\prod_{\{i,j\}\in\pi}(\xi_i\cdot\xi_j).
		\end{gathered}
		\]
		Here $k\ge0$ is an integer and the sum is over all pairings of
		$\{1,\ldots,2k\}$. Since $\zeta_j\cdot\zeta_j=0$, only the $n!$
		pairings between the two groups contribute to the $L^2$ integral.
		Using $\zeta_j(t)\cdot\overline{\zeta_j(s)}=2\cos^2((t-s)/2)$ gives
		\[
		\begin{aligned}
			&\int_{S^2}(\zeta_j(t)\cdot x)^n
			(\overline{\zeta_j(s)}\cdot x)^n\dd V_g(x)\\
			&\quad=\frac{4^{n+1}\pi(n!)^2}{(2n+1)!}
			\cos^{2n}\left(\frac{t-s}{2}\right)
			\le Cn^{-1/2}e^{-cn(t-s)^2},\qquad |t|,|s|\le1.
		\end{aligned}
		\]
		Stirling's formula gives the last bound. Young's inequality yields
		\begin{equation}\label{eq:sphere-norm}
			\begin{aligned}
				\|u_{j,n}\|_{L^2(S^2)}^2
				&\le\frac{C\sqrt n}{\delta_0}
				\iint\chi(t/\delta_0)\chi(s/\delta_0)e^{-cn(t-s)^2}\dd t\dd s\\
				&\le\frac{C\sqrt n}{\delta_0}
				\|\chi(\cdot/\delta_0)\|_2^2\|e^{-cn(\cdot)^2}\|_1\le C.
			\end{aligned}
		\end{equation}
		
		For $t=(t_1,t_2,t_3)$ near zero, define
		\[
		\mathcal A(t)=\frac8{27}
		\prod_{1\le i<j\le3}\bigl(\zeta_i(t_i)\cdot\zeta_j(t_j)\bigr),
		\qquad \mathcal A(0)=1.
		\]
		In the triple product, let $m_{ij}$ count the pairs joining groups
		$i,j$. Since $\zeta_j\cdot\zeta_j=0$, every nonzero term satisfies
		\[
		m_{12}+m_{13}=m_{12}+m_{23}=m_{13}+m_{23}=n,
		\qquad m_{12}=m_{13}=m_{23}=n/2.
		\]
		The number of such pairings is
		\[
		\binom{n}{n/2}^{\!3}\bigl((n/2)!\bigr)^3
		=\frac{(n!)^3}{((n/2)!)^3}.
		\]
		Isserlis's formula and polar integration with $2k=3n$ give
		\begin{equation}\label{eq:sphere-triple-moment}
			\int_{S^2}\prod_{j=1}^3(\zeta_j(t_j)\cdot x)^n\dd V_g(x)
			=\frac{4\pi\,3^{3n/2}(n!)^3(3n/2)!}
			{(3n+1)!((n/2)!)^3}\mathcal A(t)^{n/2}.
		\end{equation}
		The coefficient is positive and comparable to $n^{-1}$ by Stirling's formula.
		For $i\ne j$,
		\[
		\zeta_i(t_i)\cdot\zeta_j(t_j)
		=\cos t_i\cos t_j-\tfrac12\sin t_i\sin t_j+\tfrac12
		+i\sin(\theta_j-\theta_i)(\sin t_i-\sin t_j).
		\]
		Hence $\partial_{t_i}\log\mathcal A(0)=0$ and
		$\partial_{t_i}\partial_{t_j}\log\mathcal A(0)=-2/3$ for $1\le i,j\le3$.
		With the branch $\log\mathcal A(0)=0$, Taylor's formula gives
		\begin{equation}\label{eq:sphere-phase-expansion}
			\log\mathcal A(t)=-\frac{(t_1+t_2+t_3)^2}{3}+O(|t|^3).
		\end{equation}
		On the support of $\prod_{j=1}^3\chi(t_j/\delta_0)$, choose $c_*>0$ small enough that
		\[
		\frac n2|\operatorname{Im}\log\mathcal A(t)|
		\le Cn\delta_0^3=Cc_*^3\le\pi/3,
		\qquad \operatorname{Re}(\mathcal A(t)^{n/2})
		\ge\tfrac12|\mathcal A(t)|^{n/2}\ge0.
		\]
		\begin{samepage}
			For sufficiently large $n$, the set
			\[
			|t_1|,|t_2|\le\delta_0/8,\qquad
			|t_1+t_2+t_3|\le n^{-1/2}
			\]
			has volume $\delta_0^2/(8\sqrt n)$ and satisfies $|t_j|\le\delta_0/2$.
			The weight equals one there, and \eqref{eq:sphere-phase-expansion} gives
			\[
			|\mathcal A(t)|^{n/2}
			\ge\exp\left[-C\bigl(n(t_1+t_2+t_3)^2+n\delta_0^3\bigr)\right]\ge c.
			\]
		\end{samepage}
		Fubini, \eqref{eq:sphere-triple-moment}, and nonnegativity on the entire
		integration set therefore imply
		\begin{equation}\label{eq:sphere-global-lower}
			\operatorname{Re}\int_{S^2}\prod_{j=1}^3u_{j,n}\dd V_g
			\ge c\left(\frac n{\delta_0}\right)^{3/2}
			n^{-1}\delta_0^2n^{-1/2}=c\delta_0^{1/2}.
		\end{equation}
		Together with \eqref{eq:sphere-norm}, \eqref{eq:real-conversion} gives real
		normalized eigenfunctions with cubic integral at least
		$c\delta_0^{1/2}\approx\lambda_n^{-1/6}$.
		Theorem~\ref{thm:upper}\textup{(iii)} gives the upper bound.
	\end{proof}


\begin{thebibliography}{99}
		
		\bibitem{AurichSteiner}
		R. Aurich and F. Steiner,
		\emph{Statistical properties of highly excited quantum eigenstates of a strongly chaotic system},
		Phys. D \textbf{64} (1993), nos.~1--3, 185--214.
		
		\bibitem{BenedettiPetronio}
		R. Benedetti and C. Petronio,
		\emph{Lectures on Hyperbolic Geometry},
		Universitext, Springer-Verlag, Berlin, 1992.
		
		\bibitem{BernsteinReznikov1999}
		J. Bernstein and A. Reznikov,
		\emph{Analytic continuation of representations and estimates of automorphic forms},
		Ann. of Math. (2) \textbf{150} (1999), no.~1, 329--352.
		
		\bibitem{BernsteinReznikov2010}
		J. Bernstein and A. Reznikov,
		\emph{Subconvexity bounds for triple $L$-functions and representation theory},
		Ann. of Math. (2) \textbf{172} (2010), no.~3, 1679--1718.
		
		\bibitem{Berry}
		M. V. Berry,
		\emph{Regular and irregular semiclassical wavefunctions},
		J. Phys. A: Math. Gen. \textbf{10} (1977), no.~12, 2083--2091.
		
		\bibitem{BonthonneauZelditch}
		Y. Bonthonneau and S. Zelditch,
		\emph{Quantum ergodicity for Eisenstein functions},
		C. R. Math. Acad. Sci. Paris \textbf{354} (2016), no.~9, 907--911.
		
		\bibitem{BSS}
		S. B\"ocherer, P. Sarnak, and R. Schulze-Pillot,
		\emph{Arithmetic and equidistribution of measures on the sphere},
		Comm. Math. Phys. \textbf{242} (2003), nos.~1--2, 67--80.
		
		\bibitem{BonifacioHyperbolic}
		J. Bonifacio,
		\emph{Bootstrap bounds on closed hyperbolic manifolds},
		J. High Energy Phys. (2022), no.~2, Article~025.
		
		\bibitem{ButtcaneKhan}
		J. Buttcane and R. Khan,
		\emph{On the fourth moment of Hecke--Maass forms and the random wave conjecture},
		Compos. Math. \textbf{153} (2017), no.~7, 1479--1511.
		
		\bibitem{ColinDeVerdiere}
		Y. Colin de Verdi\`ere,
		\emph{Ergodicit\'e et fonctions propres du laplacien},
		Comm. Math. Phys. \textbf{102} (1985), no.~3, 497--502.
		
		\bibitem{Cuccagna}
		S. Cuccagna,
		\emph{$L^2$ estimates for averaging operators along curves with two-sided $k$-fold singularities},
		Duke Math. J. \textbf{89} (1997), no.~2, 203--216.
		
		\bibitem{DjankovicKhan}
		G. Djankovi\'c and R. Khan,
		\emph{On the random wave conjecture for Eisenstein series},
		Int. Math. Res. Not. IMRN (2020), no.~23, 9694--9716.
		
		\bibitem{DjankovicKhanTruncated}
		G. Djankovi\'c and R. Khan,
		\emph{The fourth moment of truncated Eisenstein series},
		Comm. Math. Phys. \textbf{407} (2026), Article~3.
		
		\bibitem{Dyatlov2023}
		S. Dyatlov,
		\emph{Quantum ergodicity in theorems and pictures},
		Notices Amer. Math. Soc. \textbf{70} (2023), no.~10, 1592--1601.
		
		\bibitem{DyatlovJin}
		S. Dyatlov and L. Jin,
		\emph{Semiclassical measures on hyperbolic surfaces have full support},
		Acta Math. \textbf{220} (2018), no.~2, 297--339.
		
		\bibitem{DyatlovJinNonnenmacher}
		S. Dyatlov, L. Jin, and S. Nonnenmacher,
		\emph{Control of eigenfunctions on surfaces of variable curvature},
		J. Amer. Math. Soc. \textbf{35} (2022), no.~2, 361--465.
		
		\bibitem{GreenleafSeeger2002}
		A. Greenleaf and A. Seeger,
		\emph{Oscillatory integral operators with low-order degeneracies},
		Duke Math. J. \textbf{112} (2002), no.~3, 397--420.
		
		\bibitem{Guo}
		C. Guo,
		\emph{Joint cubic moment of Eisenstein series and Hecke-Maass cusp forms},
		J. Number Theory \textbf{276} (2025), 162--197.
		
		\bibitem{HassellTacy}
		A. Hassell and M. Tacy,
		\emph{Semiclassical $L^p$ estimates of quasimodes on curved hypersurfaces},
		J. Geom. Anal. \textbf{22} (2012), no.~1, 74--89.
		
		\bibitem{HejhalRackner}
		D. A. Hejhal and B. N. Rackner,
		\emph{On the topography of Maass waveforms for $\mathrm{PSL}(2,\mathbb Z)$},
		Experiment. Math. \textbf{1} (1992), no.~4, 275--305.
		
		\bibitem{Huang}
		B. Huang,
		\emph{The cubic moment of Hecke--Maass cusp forms and moments of $L$-functions},
		Math. Ann. \textbf{389} (2024), no.~1, 899--945.
		
		\bibitem{HuangVariance}
		B. Huang,
		\emph{Quantum variance for Eisenstein series},
		Int. Math. Res. Not. IMRN (2021), no.~2, 1224--1248.
		
		\bibitem{HuangLester}
		B. Huang and S. Lester,
		\emph{Quantum variance for dihedral Maass forms},
		Trans. Amer. Math. Soc. \textbf{376} (2023), no.~1, 643--695.
		
		\bibitem{HuangLiCubic}
		B. Huang and L. Li,
		\emph{Quantum variance for cubic moment of Hecke--Maass cusp forms and Eisenstein series},
		arXiv:2510.12322, 2025.
		
		\bibitem{Humphries}
		P. Humphries,
		\emph{Equidistribution in shrinking sets and $L^4$-norm bounds for automorphic forms},
		Math. Ann. \textbf{371} (2018), nos.~3--4, 1497--1543.
		
		\bibitem{HumphriesKhan}
		P. Humphries and R. Khan,
		\emph{On the random wave conjecture for dihedral Maa\ss\ forms},
		Geom. Funct. Anal. \textbf{30} (2020), no.~1, 34--125.
		
		\bibitem{Iwaniec}
		H. Iwaniec,
		\emph{Spectral Methods of Automorphic Forms}, 2nd ed.,
		Graduate Studies in Mathematics, vol.~53,
		American Mathematical Society, Providence, RI, 2002.
		
		
		\bibitem{JakobsonEisenstein}
		D. Jakobson,
		\emph{Quantum unique ergodicity for Eisenstein series on
			$\mathrm{PSL}_2(\mathbb Z)\backslash\mathrm{PSL}_2(\mathbb R)$},
		Ann. Inst. Fourier (Grenoble) \textbf{44} (1994), no.~5, 1477--1504.
		
		\bibitem{Janson}
		S. Janson,
		\emph{Gaussian Hilbert Spaces},
		Cambridge Tracts in Mathematics, vol.~129,
		Cambridge University Press, Cambridge, 1997.
		
		\bibitem{Ki}
		H. Ki,
		\emph{$L^4$-norms and sign changes of Maass forms},
		Ann. of Math. (2) \textbf{204} (2026),
		no.~2, 741--775.
		
		
		\bibitem{Lindenstrauss}
		E. Lindenstrauss,
		\emph{Invariant measures and arithmetic quantum unique ergodicity},
		Ann. of Math. (2) \textbf{163} (2006), no.~1, 165--219.
		
		\bibitem{LuoSarnak1995}
		W. Luo and P. Sarnak,
		\emph{Quantum ergodicity of eigenfunctions on $\mathrm{PSL}_2(\mathbb Z)\backslash\mathbb H^2$},
		Publ. Math. Inst. Hautes \'Etudes Sci. \textbf{81} (1995), 207--237.
		
		\bibitem{LuoSarnakVariance}
		W. Luo and P. Sarnak,
		\emph{Quantum variance for Hecke eigenforms},
		Ann. Sci. \'Ec. Norm. Sup\'er. (4) \textbf{37} (2004), no.~5, 769--799.
		
		\bibitem{MarinucciWigman}
		D. Marinucci and I. Wigman,
		\emph{On nonlinear functionals of random spherical eigenfunctions},
		Comm. Math. Phys. \textbf{327} (2014), no.~3, 849--872.
		
		\bibitem{McDonaldKaufman}
		S. W. McDonald and A. N. Kaufman,
		\emph{Wave chaos in the stadium: Statistical properties of short-wave solutions of the Helmholtz equation},
		Phys. Rev. A \textbf{37} (1988), no.~8, 3067--3086.
		
		\bibitem{MelroseTaylor}
		R. B. Melrose and M. E. Taylor,
		\emph{Near peak scattering and the corrected Kirchhoff approximation for a convex obstacle},
		Adv. Math. \textbf{55} (1985), no.~3, 242--315.
		
		\bibitem{NIST}
		F. W. J. Olver, D. W. Lozier, R. F. Boisvert, and C. W. Clark (eds.),
		\emph{NIST Handbook of Mathematical Functions},
		Cambridge University Press, Cambridge, 2010.
		
		\bibitem{Nonnenmacher}
		S. Nonnenmacher,
		\emph{Anatomy of quantum chaotic eigenstates},
		S\'eminaire Poincar\'e \textbf{14} (2010), 177--220.
		
		\bibitem{Olver}
		F. W. J. Olver,
		\emph{Asymptotics and Special Functions},
		A K Peters, Wellesley, MA, 1997.
		
		\bibitem{PanSogge}
		Y. Pan and C. D. Sogge,
		\emph{Oscillatory integrals associated to folding canonical relations},
		Colloq. Math. \textbf{60/61} (1990), no.~2, 413--419.
		
		\bibitem{Patterson1975}
		S. J. Patterson,
		\emph{The Laplacian operator on a Riemann surface},
		Compositio Math. \textbf{31} (1975), no.~1, 83--107.
		
		\bibitem{Petersen}
		P. Petersen,
		\emph{Riemannian Geometry}, 3rd ed.,
		Graduate Texts in Mathematics, vol.~171,
		Springer, Cham, 2016.
		
		\bibitem{RudnickSarnak}
		Z. Rudnick and P. Sarnak,
		\emph{The behaviour of eigenstates of arithmetic hyperbolic manifolds},
		Comm. Math. Phys. \textbf{161} (1994), no.~1, 195--213.
		
		\bibitem{SarnakProducts}
		P. Sarnak,
		\emph{Integrals of products of eigenfunctions},
		Int. Math. Res. Not. (1994), no.~6, 251--260.
		
		\bibitem{Sarnak2003}
		P. Sarnak,
		\emph{Spectra of hyperbolic surfaces},
		Bull. Amer. Math. Soc. (N.S.) \textbf{40} (2003), no.~4, 441--478.
		
		\bibitem{Sarnak2011}
		P. Sarnak,
		\emph{Recent progress on the quantum unique ergodicity conjecture},
		Bull. Amer. Math. Soc. (N.S.) \textbf{48} (2011), no.~2, 211--228.
		
		\bibitem{SarnakZhao}
		P. Sarnak and P. Zhao, with an appendix by M. Woodbury,
		\emph{The quantum variance of the modular surface},
		Ann. Sci. \'Ec. Norm. Sup\'er. (4) \textbf{52} (2019), no.~5, 1155--1200.
		
		\bibitem{Shnirelman}
		A. I. Shnirel'man,
		\emph{Ergodic properties of eigenfunctions},
		Uspekhi Mat. Nauk \textbf{29} (1974), no.~6 (180), 181--182.
		
		\bibitem{Sogge1988}
		C. D. Sogge,
		\emph{Concerning the $L^p$ norm of spectral clusters for second-order elliptic operators on compact manifolds},
		J. Funct. Anal. \textbf{77} (1988), no.~1, 123--138.
		
		\bibitem{Sogge}
		C. D. Sogge,
		\emph{Fourier Integrals in Classical Analysis}, 2nd ed.,
		Cambridge Tracts in Mathematics, vol.~210,
		Cambridge University Press, Cambridge, 2017.
		
		\bibitem{Soundararajan}
		K. Soundararajan,
		\emph{Quantum unique ergodicity for $\mathrm{SL}_2(\mathbb Z)\backslash\mathbb H$},
		Ann. of Math. (2) \textbf{172} (2010), no.~2, 1529--1538.
		
		\bibitem{Srednicki}
		M. Srednicki,
		\emph{Chaos and quantum thermalization},
		Phys. Rev. E \textbf{50} (1994), no.~2, 888--901.
		
		\bibitem{Watson}
		T. C. Watson,
		\emph{Rankin triple products and quantum chaos},
		Ph.D. thesis, Princeton University, 2002.
		Revised version, 2008, arXiv:0810.0425.
		
		\bibitem{WeichWolf}
		T. Weich and L. L. Wolf,
		\emph{Absence of principal eigenvalues for higher rank locally symmetric spaces},
		Comm. Math. Phys. \textbf{403} (2023), no.~3, 1275--1295.
		
		\bibitem{Wyman}
		E. L. Wyman,
		\emph{Triangles and triple products of Laplace eigenfunctions},
		J. Funct. Anal. \textbf{282} (2022), no.~8, Article~109404.
		
		\bibitem{Zelditch1987}
		S. Zelditch,
		\emph{Uniform distribution of eigenfunctions on compact hyperbolic surfaces},
		Duke Math. J. \textbf{55} (1987), no.~4, 919--941.
		
		\bibitem{Zelditch1991}
		S. Zelditch,
		\emph{Mean Lindel\"of hypothesis and equidistribution of cusp forms
			and Eisenstein series},
		J. Funct. Anal. \textbf{97} (1991), no.~1, 1--49.
		
	\enlargethispage{3\baselineskip}
	\end{thebibliography}
\end{document}